\DeclareSymbolFont{bbold}{U}{bbold}{m}{n}
\DeclareSymbolFontAlphabet{\mathbbold}{bbold}
\newtheorem{thm}{Theorem}[section]
\newtheorem{prop}[thm]{Proposition}
\newtheorem{lem}[thm]{Lemma}
\newtheorem{cor}[thm]{Corollary}
\theoremstyle{definition}
\theoremstyle{remark}
\newtheorem{rem}[thm]{Remark}
\newcommand{\BR}{\mathbb{R}}
\newcommand{\BZ}{\mathbb{Z}}
\newcommand{\supp}{\text{supp}}
\newcommand{\R}{\mathbb{R}}
\newcommand{\C}{\mathbb{C}}
\newcommand{\N}{\mathbb{N}}
\newcommand{\Z}{\mathbb{Z}}
\newcommand{\A}{\mathbb{A}}
\newcommand{\eps}{\varepsilon}
\def\Xint#1{\mathchoice
{\XXint\displaystyle\textstyle{#1}}%
{\XXint\textstyle\scriptstyle{#1}}%
{\XXint\scriptstyle\scriptscriptstyle{#1}}%
{\XXint\scriptscriptstyle\scriptscriptstyle{#1}}%
\!\int}
\def\XXint#1#2#3{{\setbox0=\hbox{$#1{#2#3}{\int}$ }
\vcenter{\hbox{$#2#3$ }}\kern-.6\wd0}}
\def\dashint{\Xint-}
\title[Sobolev smoothing for bilinear maximal operators with fractal dilation sets]{Sobolev smoothing estimates for bilinear maximal operators with fractal dilation sets}
\author{Tainara Borges, Benjamin Foster, and Yumeng Ou}
\address[T. Borges]{Department of Mathematics, Brown University, Providence, RI 02912}
\address[B. Foster]{Department of Mathematics, Stanford University, Stanford, CA 94305}
\address[Y. Ou]{Department of Mathematics, University of Pennsylvania, Philadelphia, PA 19104}
\begin{document}

\begin{abstract}
    Given a hypersurface $S\subset \R^{2d}$, we study the bilinear averaging operator that averages a pair of functions over $S$, as well as more general bilinear multipliers of limited decay and various maximal analogs. Of particular interest are bilinear maximal operators associated to a fractal dilation set $E\subset [1,2]$; in this case, the boundedness region of the maximal operator is associated to the geometry of the hypersurface and various notions of the dimension of the dilation set. In particular, we determine Sobolev smoothing estimates at the exponent $L^{2}\times L^{2}\rightarrow L^2$ using Fourier-analytic methods, which allow us to deduce additional $L^{p}$ improving bounds for the operators and sparse bounds and their weighted corollaries for the associated multi-scale maximal functions. We also extend the method to study analogues of these questions for the triangle averaging operator and biparameter averaging operators. In addition, some necessary conditions for boundedness of these operators are obtained.
\end{abstract}

\maketitle

\tableofcontents 

\section{Introduction}

Let $d\geq 1$. Given a bounded measurable function $m:\R^{2d}\rightarrow \C$, we consider the associated bilinear multiplier operator defined as 
\begin{equation}\label{eqn: Tm}
T_m(f,g)(x)=\int_{\R^{2d}}\hat{f}(\xi)\hat{g}(\eta)m(\xi,\eta)e^{2\pi i x\cdot (\xi+\eta)} \,d\xi d\eta,\quad x\in \R^d,
\end{equation}for all $f,\,g\in C^{\infty}_{0}(\R^d)$.

     We say a bilinear multiplier $m:\R^{2d}\rightarrow \C$ has $a$-admissible decay up to order $k$, $k\geq 0$, if $m\in C^{k}(\R^{2d})$ and there is $a>\frac{d}{2}$ such that 
     \begin{equation}
         |\partial^{\alpha}m(\xi, \eta)|\lesssim_{\alpha}\frac{1}{(1+|\xi|+|\eta|)^{a}},
     \end{equation}
      for all multi-indices $\alpha=(\alpha_1,\dots,\alpha_{2d})$ with $|\alpha|\leq k.$ 
      
      In the case where $m=\hat{\mu}$ for $\mu$ being a compactly supported finite measure on $\R^{2d}$ satisfying
\begin{equation}\label{admissiblemeasure}
   |\hat{\mu}(\xi,\eta)|\lesssim \frac{1}{(1+|\xi|+|\eta|)^{a}} 
\end{equation}
     with $a>\frac{d}{2}$, we will simply say that $\hat{\mu}$ is $a$-admissible.

As observed in \cite{rubiodefrancia}, if $\mu$ is a compactly supported finite measure on $\R^{2d}$ satisfying the decay estimate (\ref{admissiblemeasure}), then for all multi-index $\alpha\in \Z_{\geq 0}^{2d}$, one also has $|\partial^{\alpha}(\hat{\mu})(\xi,\eta)|\lesssim_{\alpha} (1+|\xi|+|\eta|)^{-a}$. Hence, if $\hat{\mu}$ is $a$-admissible of order $0$, it is automatically $a$-admissible up to order $k$ for any $k\in \N$.

Multipliers like $m$ defined above are sometimes referred to as multipliers of limited decay in literature. For instance, a class of such bilinear multipliers have been studied by Grafakos--He--Honz\'ik in \cite{GHH} where $L^2\times L^2\to L^1$ bound of its multi-scale maximal function is obtained for $a$-admissible multipliers up to order $\lfloor \frac{d}{2} \rfloor+2$ with $a>\frac{d}{2}+1$. These operators are natural bilinear analogues of a class of linear operators studied by Rubio de Francia \cite{rubiodefrancia} that includes the spherical maximal functions (whose boundedness have been obtained by Stein \cite{stein} and Bourgain \cite{bourgaind2}) as a special case.

In this paper, we study boundedness properties of single-scale maximal functions of such bilinear multiplier operators over a fractal dilation set, as well as their corresponding multi-scale maximal functions. In addition to $L^p$ bounds, we also obtain Sobolev norm bounds and sparse bounds, which in turn imply vector valued estimates and weighted norm inequalities with respect to the Muckenhoupt $A_p$ weights for such operators. We give the definition of these operators below after discussing a few motivating examples of the multiplier $m$.

One important class of bilinear multiplier operators with admissible multipliers as defined above is given by bilinear averaging operators over smooth compact surfaces. Let $S\subset \R^{2d}$ be a $(2d-1)$-dimensional compact smooth hypersurface without boundary such that $k$ of the $(2d-1)$ principal curvatures do not vanish, where $k>d$. Let $\mu_S$ be the normalized natural surface measure on $S$ and define the (scale $1$) averaging operator
\begin{equation}\label{defkcurvatures}
     \mathcal{A}_{\hat\mu_S,1}(f,g)(x)=\int_{S}f(x-y)g(x-z)\,d\mu_S(y,z).
\end{equation}

Then it is easy to see that $\mathcal{A}_{\hat\mu_S,1}$ is a bilinear multiplier operator $T_m$ associated to $m(\xi,\eta)=\hat{\mu}_S(\xi,\eta)$ as defined above. It is well known \cite{Littman63} that in that case all the derivatives of $\hat{\mu}_S$ satisfy 
\begin{equation}\label{decaysurfacemulitplier}
   |\partial^{\alpha}(\hat{\mu}_S)(\xi, \eta)|\leq C_{\alpha} \dfrac{1}{(1+|(\xi,\eta)|)^{k/2}},\quad \textnormal{ for all } |\alpha|\geq 0,
\end{equation}hence the multiplier $\hat\mu_S$ is $\frac{k}{2}$-admissable. Similarly, fix a scale $t\in [1,2]$, one can define the (scale $t$) averaging operator 
\begin{equation}\label{eqn: St}
\mathcal{A}_{\hat\mu_S, t}(f,g)(x)=\int_S f(x-ty)g(x-tz)\,d\mu_S(y,z),
\end{equation}which is obviously an operator of the form $T_{m_t}$ with $\frac{k}{2}$-admissible multiplier $m_t(\xi,\eta)=\hat\mu_S(t\xi, t\eta)$.

As a particular case, when $d\geq 2$ and $\mu_S=\sigma_{2d-1}$, the normalized spherical measure on the unit sphere $S^{2d-1}\subset\R^{2d}$, then for every $t>0$, $m_t(\xi,\eta)=\hat\sigma_{2d-1}(t\xi, t\eta)$ is an $a$-admissible multiplier with $a=\frac{2d-1}{2}>\frac{d}{2}$ and we introduce the notation
$$\mathcal{A}_t(f,g)(x):=\mathcal{A}_{\hat\sigma_{2d-1},t}(f,g)(x)=\int_{S^{2d-1}}f(x-ty)g(x-tz)\,d\sigma_{2d-1}(y,z),\quad x\in \R^d.$$ Here, $\mathcal{A}_t$ is usually referred to as the bilinear spherical averaging operator at scale $t>0$.

Another important bilinear averaging operator we will be interested in is the triangle averaging operator of radius $t>0$, namely,
\begin{equation}
    \mathcal{T}_{t}(f,g)(x)=\int_{\mathcal{I}} f(x-ty)g(x-tz)\,d\mu(y,z),
\end{equation}
where $\mu$ is the natural normalized surface measure on the submanifold of $\R^{2d}$ given by 
\begin{equation}\label{trianglemanifold}
    \mathcal{I}=\{(y,z)\in \R^{2d}\colon |y|=|z|=|y-z|=1 \},\quad d\geq 2.
\end{equation}
This situation requires a more delicate analysis, as the submanifold $\mathcal{I}$ is codimension 3 rather than codimension 1, and the Fourier transform of the surface measure has worse decay properties. For instance, it was proved in \cite{triangleaveraging} that for any multi-indices $\alpha,\beta$
\begin{equation}\label{decaytriangle}
|\partial_{\xi}^{\alpha}\partial_{\eta}^{\beta}\hat{\mu}(\xi,\eta)|\leq C_{\alpha,\beta} \left(1+\min\{|\xi|,|\eta|\}|\sin{\theta}|\right)^{-\frac{d-2}{2}} (1+|(\xi,\eta)|)^{-\frac{d-2}{2}},
\end{equation}
where $\theta$ denotes the angle between $\xi$ and $\eta$. If one tries to bound this independently of $\theta$, the best one can say is that the right hand-side of (\ref{decaytriangle}) is bounded above by $(1+|(\xi,\eta)|)^{-\frac{d-2}{2}}$. Since $\frac{d-2}{2}<\frac{d}{2}$, $\hat{\mu}$ does not fall in our class of admissible multipliers and our general theorems later do not apply to it. To get decay bounds for the single-scale triangle averaging operators we will need a more careful analysis than the one for the decay bounds for single-scale maximal operators associated to admissible multipliers since we have to take into account the dependence in $\theta$ in the decay of the multiplier. We resolve this issue by applying an additional angular decomposition of the multiplier, and extend our main boundedness results to such triangle averaging operators. 

With these examples in mind, we are ready to introduce one of the main objects of the study in this paper: a class of general single-scale bilinear maximal operators defined as
\begin{equation}
    \mathcal{A}_{m,E}(f,g)(x)=\sup_{t\in E}|T_{m_t}(f,g)(x)|,
\end{equation}
where $m_t(\xi,\eta)=m(t\xi,t\eta)$, and $E\subset [1,2]$ is a nonempty fractal set of dilation scales. Our first goal is to understand its boundedness region on the Lebesgue spaces $L^p\times L^q\to L^r$ and similarly establish bounds for the associated multi-scale maximal operator 

\begin{equation}\label{definitionmultiscaleE}
    \mathcal{M}_{m,E}(f,g)(x):=\sup_{l\in \Z}\sup_{t\in E}|T_{m_{t2^l}}(f,g)(x)|.
\end{equation}
Note that the operator $\mathcal{A}_{m,E}$ is local in $t$, it is hence not scaling invariant and is expected to satisfy Lebesgue space bounds outside the H\"older range. Such bounds are often referred to as the $L^p$ improving bounds. 

In the case that $E=\{t\}$, we oftentimes write $\mathcal{A}_{m,\{t\}}=\mathcal{A}_{m,t}$ for short, similarly $\mathcal{M}_{m,\{t\}}=\mathcal{M}_{m,t}$. In the case that $m=\hat\sigma_{2d-1}$, these become the more familiar bilinear spherical maximal functions and we adopt the notation $\mathcal{A}_E:=\mathcal{A}_{\hat\sigma_{2d-1},E}$ and $\mathcal{M}_E:=\mathcal{M}_{\hat\sigma_{2d-1},E}$.

When $m=\hat{\mu}$ for $\mu$ being the natural normalized surface measure on the triangle manifold $\mathcal{I}$, for the sake of clarity and being in line with existing literature, we denote these operators as
\begin{equation}\label{TEdefinition}
    \mathcal{T}_{E}(f,g)(x):=\sup_{t\in E} |\mathcal{T}_t(f,g)(x)|
\end{equation}
and 
\begin{equation}\label{multiscaleTEdef}
\mathcal{T}_E^{*}(f,g)(x):=\sup_{l\in \Z}\sup_{t\in E} |\mathcal{T}_{t2^l}(f,g)(x)|,
\end{equation} 
respectively.

 Obtaining the sharp bounds of $\mathcal{A}_{m,E}$ and $\mathcal{T}_E$ for general $E$ is a very challenging question, and even in some special cases, our knowledge is far from complete. For the particular case that $m=\hat{\sigma}_{2d-1}$ and $E=[1,2]$, the study of $L^p$ improving bounds outside the H\"older range for $\tilde{\mathcal{M}}:=\mathcal{A}_{\hat{\sigma}_{2d-1},[1,2]}$ was initiated in \cite{JL}, but the sharp boundedness region was only very recently obtained in \cite{bhojak2023sharp} for $d\geq 2$. In $d=1$, nothing outside the H\"older range is known (see \cite{MCZZ, dosidisramos} for some H\"older bounds of $\mathcal{A}_{\hat\sigma_1, [1,2]}$ followed as a consequence of the same bounds for the full bilinear circular maximal function). The only other case of $E$ that partial results are known is $E=\{1\}$, see \cite{IPS, borgesfoster, choleeshuin} and the references therein for some boundedness results when $m=\hat\sigma_{2d-1}$, $d\geq 2$ and \cite{SS, DOberlin, BS} for the case $m=\hat\sigma_1$, $d=1$. The recent estimates in \cite{choleeshuin} also extends to the case $m=\hat\mu_S$ for $S\subset \mathbb{R}^{2d}$ being a compact smooth surface with $k\geq d$ nonvanishing principal curvatures. Even in these special cases, none of the above listed results for $E=\{1\}$ is known to be sharp. 
 
 In this paper, we initiate the study of this question for general dilation set $E\subset [1,2]$ and obtain a partial description of the boundedness region. Even in the case $m=\hat\sigma_{2d-1}$, the sharp $L^{p}$ improving region for $\mathcal{A}_{m,E}$ is still unknown, similarly for $\mathcal{T}_E$. We use Sobolev smoothing estimates for $\mathcal{A}_{m,E}$ at $L^{2}\times L^{2}\rightarrow L^{2}$ as a key tool in proving the sufficient conditions. For the case $m=\hat{\sigma}_{2d-1}$, we will also provide some necessary conditions for the parameters $p,q,r$ for which $\mathcal{A}_{\hat{\sigma}_{2d-1},E}:L^{p}\times L^{q}\rightarrow L^{r}$ is bounded for a general $E\subset [1,2]$. These necessary conditions match the sufficient conditions in \cite{bhojak2023sharp} for the case $E=[1,2]$ up to the boundary.

 In the linear setting, similar questions have been investigated by a large group of authors over the decades. The model operators there are the spherical maximal function over a fractal dilation set, defined as
 \[
 \mathcal{A}_{E,linear}f(x):=\sup_{t\in E}\left|\int_{S^{d-1}} f(x-ty)\,d\sigma_{d-1}(y)\right|,
 \]and its corresponding multi-scale maximal function similarly as defined in (\ref{definitionmultiscaleE}). In the particular case that $E=\{1\}$, the multi-scale maximal function becomes the classical (linear) lacunary spherical maximal function whose boundedness on $L^p$, $1<p\leq \infty$ is well known (see \cite{calderon, coifmanweiss}). While in the case that $E=[1,2]$, its multi-scale maximal function is the famous (linear) spherical maximal function, whose $L^p$ bound was obtained by Stein \cite{stein} in $d\geq 3$ and Bourgain \cite{bourgaind2} in $d=2$. The sharp $L^p$ improving region for $\mathcal{A}_{\{1\}, linear}$ follows from a classical result in \cite{Littman73}, and the case $\mathcal{A}_{[1,2], linear}$ are obtained in \cite{Schlag, SchlagSogge}. For the general dilation set case, the systematic study of this class of linear operators began in the 1990s, see \cite{SeegerWainger, DuoanVargas, SeegerWainger2, TaoSeegerWright} for the $L^p\to L^p$ boundedness of $\mathcal{A}_{E, linear}$ and some refinement. It was only a few years ago that an almost complete understanding of its sharp $L^p$ improving region was finally achieved, see \cite{AHRS, RS}. As it turned out, this region is closely related to various notions of dimension of the dilation set $E$. In this paper, we explore in the bilinear setting a similar relation between the boundedness region of the averaging operator and the dimension of the dilation set $E$, which seems to be the first result of its kind for bilinear operators. An interesting discovery is that, in many results that we prove below, there is an explicit relation between the allowed class of multipliers, more precisely the allowed admissibility of the multipliers (given by the decay parameter $a$), and the dimension of the dilation set $E$. Such a phenomenon also showed up in the linear setting, see \cite{DuoanVargas}.

Our second goal is to establish sparse bounds for $\mathcal{M}_{m,E}$ and to link the $L^{p}$ improving bounds $L^p\times L^q \to L^r$ of $\mathcal{A}_{m,E}$ with $r> 1$ to sparse bounds of $\mathcal{M}_{m,E}$. To this end, a machinery that resolves this problem is successfully developed in this paper. More precisely, we will prove Sobolev smoothing estimates for the single-scale operator $\mathcal{A}_{m,E}$, getting as corollary continuity estimates for $\mathcal{A}_{m,E}$ at the exponent $(2,2,2)$ (to be made precise below, see Theorem \ref{sobolevE} and Corollary \ref{continuityA_E}), which, via previously developed methods (see \cite{sparsetriangle, BFOPZ}), will imply sparse bound $(p,q,r')$ for $\mathcal{M}_{m,E}$ at the same exponent $(p,q,r)$ where the Lebesgue space bound of $\mathcal{A}_{m,E}$ holds true. 

Sparse domination has been a vividly growing area in harmonic analysis since its birth less than ten years ago. It was originally developed for the study of the $A_2$ conjecture on sharp weighted norm inequalities for Calder\'on-Zygmund operators \cite{Lerner, Lacey0}, but has shown to be a powerful tool in deriving many more properties of different types of operators. As it is impossible to exhaust the list of important prior works in the theory, here we only mention a few that are most closely related to the subject matter of our article, and refer the interested reader to the great amount of references listed there for further results on sparse domination. In \cite{Lacey}, sharp sparse bounds for the linear full and lacunary spherical maximal functions were derived, as a consequence of continuity estimates of their corresponding single-scale operator $\mathcal{A}_{\{1\}, linear}$ and $\mathcal{A}_{[1,2], linear}$ respectively. This then motivated many followup sparse domination results for other Radon type operators in the linear setting, see for instance \cite{CO, BRS, CDPPV}, and in particular for the multi-scale maximal operator associated to $\mathcal{A}_{E, linear}$ obtained in \cite{AHRS}. 

In the bilinear setting, only a few sparse domination results are known for operators in or related to the class of operators that we are interested in here. More precisely, sparse bounds for the full and lacunary bilinear spherical maximal functions were obtained in \cite{BFOPZ, sparsetriangle}, see also \cite{RSS} for a similar operator but with a product structure, and \cite{sparsetriangle} for the triangle maximal operators. A common framework all these works follow is a machinery (adapted from \cite{Lacey} to the bilinear setting) that turns continuity estimates for single-scale operators to sparse bounds for multi-scale maximal functions. This is also the framework we will follow in this paper, which, as mentioned above, reduces the matter to obtaining satisfactory single-scale estimates. We refer to Section \ref{sec:notation} for the precise definitions of sparse family and sparse domination and to Theorem \ref{sparseboundME} below for the precise statement of one of our sparse domination theorems for $\mathcal{M}_{m,E}$. It is also well known that sparse bounds have many applications. For example, it implies immediately quantitative weighted norm inequalities (with respect to multilinear Muckenhoupt $A_p$ weights in our case, and include the Lebesgue space bounds as a special case), endpoint estimates, and vector valued estimates. We do not explore the sharp consequences in these directions, see for instance \cite{CDPO, RSS, sparsetriangle} for some of these applications.

The sparse bounds for $\mathcal{M}_{m,E}$ that we obtain have close connections and applications to the aforementioned first goal of the paper, i.e. obtaining Lebesgue space bounds for $\mathcal{A}_{m,E}$ and $\mathcal{M}_{m,E}$. As already briefly mentioned in the above, a key feature of our machinery is that it reduces the understanding of the sparse bounds for $\mathcal{M}_{m,E}$ to that of the sharp $L^p\times L^q\times L^r$ boundedness region of $\mathcal{A}_{m,E}$. Indeed, as mentioned above, a key step in our proof of the sparse bounds for $\mathcal{M}_{m,E}$ is obtaining continuity estimates for $\mathcal{A}_{m,E}$. Once such a continuity estimate (at some exponent) is obtained, the framework becomes very flexible: any improvement in the known boundedness region of $\mathcal{A}_{m,E}$ would translate to improvement in known continuity estimates by using multilinear interpolation, hence leads to improvement in the range of known sparse bounds as well as the implied Lebesgue or weighted norm estimates. Our paper is the first effort to prove continuity estimates for multilinear operators associate with a general dilation set $E$. In the special case  $m=\hat{\sigma}_{2d-1}$ and $E=\{1\}$ or $E=[1,2]$, continuity estimates have been studied before simultaneously in \cite{sparsetriangle} and \cite{BFOPZ}. In \cite{sparsetriangle} they prove continuity estimates for $E=\{1\}$ and $E=[1,2]$ for all except some low dimensional cases, namely for $d\geq 2$ for $E=\{1\}$ and for $d\geq 4$ for $E=[1,2]$. In \cite{BFOPZ}, continuity estimates are obtained for $\mathcal{M}_{\hat\sigma_{2d-1},[1,2]}$ for any dimension $d\geq 2$  and for $\mathcal{M}_{\hat\sigma_{2d-1},\{1\}}$ in $d=1$. The proof of continuity estimates in \cite{sparsetriangle}  uses the $L^{2}\times L^{2}\rightarrow L^{2}$ boundedness criteria developed in \cite{GHS} and it relies heavily on the decay of the multiplier $\hat{\sigma}_{2d-1}$, but only leads to continuity estimates for $\tilde{\mathcal{M}}$ in sufficient large dimensions $d\geq 4$ (also see Remark \ref{comparisonwithpalsson} for more details on how that strategy compares to the one in this paper). The continuity estimates for $\mathcal{M}_{\hat\sigma_{2d-1},[1,2]}$ in \cite{BFOPZ} do not use the decay of $\hat{\sigma}_{2d-1}$ as directly, but in turn rely on the slicing technique developed in \cite{JL} for the sphere so it seems harder to generalize to the setting that we are aiming for in this work. 

Moreover, by applying our sparse bounds and exploiting some other techniques such as Sobolev embedding, we obtain a wide range of Lebesgue bounds for $\mathcal{M}_{m,E}$, which includes the particular case $L^2\times L^2 \to L^1$ (see Theorem \ref{Lebesgueboundsmultiscale}). Such bounds in such generality seem to be the first of their kind, except for the special cases $\mathcal{M}_{\hat\sigma_{2d-1},\{1\}}$ and $\mathcal{M}_{\hat\sigma_{2d-1}, [1,2]}$, for which better estimates were already known (\cite{JL, MCZZ, borgesfoster}). In the bilinear setting, obtaining the $L^2\times L^2\to L^1$ (and other Lebesgue bounds) is in general much more challenging compared to obtaining the natural $L^2\to L^2$ bound for linear operators, which oftentimes follows from Plancherel. They are also of intrinsic interest, since they imply among many other things almost everywhere convergence results. For example, suppose $m=\hat\mu_S$ and $E=[1,2]$, then our Lebesgue bound of $\mathcal{M}_{\hat\mu_S,[1,2]}$ (included as a special case of Theorem \ref{Lebesgueboundsmultiscale} below) implies immediately the convergence of $\mathcal{A}_{\hat\mu_S, t}(f,g)(x)$ to $f(x)g(x)$, for almost all $x$ as $t\to 0$, for $f,g\in L^2$. In this sense, our work naturally extends the line of investigation (e.g. \cite{GHH}) of Lebesgue bounds of bilinear analogues of classical linear operators of averaging type studied by Stein \cite{stein}, Bourgain \cite{bourgaind2}, Rubio de Francia \cite{rubiodefrancia}, and others (see also \cite{DuoanVargas} for an extension to the fractal dilation set setting of \cite{rubiodefrancia} in the linear case). And our boundedness results, which improve previously best known estimates in many directions (see Remark \ref{rmk: Lebesgue compare} below for a more detailed discussion), further confirms the strength and versatility of the sparse domination technique.

It turns out that, for the argument deducing sparse bounds from the continuity estimates to work, we will need the extra assumption that $m$ is given by the Fourier transform of a compactly supported finite measure $\mu$ in $\R^{2d}$. In that case the bilinear multiplier operator $T_{m_t}$ becomes a bilinear average over a compact set, namely
\begin{equation}
    T_{m_t}(f,g)(x)=\int f(x-ty)g(x-tz)\,d\mu(y,z).
\end{equation}The motivating example $\mathcal{A}_{\hat\mu_S,t}$ as defined in (\ref{eqn: St}) obviously falls into this category, but the theory is more general than that as $\mu$ is not necessarily given as a surface measure at all. 

We also extend the continuity estimates and sparse bounds to the triangle averaging operator $\mathcal{T}_E$ and its maximal function $\mathcal{T}^*_E$, partially resolving an issue showed up in \cite{sparsetriangle} and improving the previously best known results in this direction, see Remark \ref{rmk: PS} for more details.

In addition, we extend the aforementioned estimates regarding $\mathcal{A}_{m,E}$ and $\mathcal{M}_{m,E}$ to their slightly larger biparameter analogues, which were even less understood in literature before. We obtain interesting sufficient and necessary conditions for their boundedness regions. Given two dilation sets $E_1, E_2\subset [1,2]$, define 
\[
\mathcal{A}_{m,E_1,E_2}^{(2)}(f,g)(x):=\sup_{t_1\in E_1,t_2\in E_2}|T_{m_{t_1,t_2}}(f,g)(x)|,
 \]where 
 \[
  T_{m_{t_1,t_2}}(f,g)(x)=\int_{\R^{2d}}\hat{f}(\xi)\hat{g}(\eta)m(t_1\xi,t_2\eta)e^{2\pi i x\cdot(\xi+\eta)}\,d\xi d\eta.
 \]We are interested in the boundedness of $\mathcal{A}^{(2)}_{m,E_1,E_2}$ and its multi-scale maximal function defined as $\sup_{l\in \mathbb{Z}}\sup_{t_1\in E_1,t_2\in E_2}|T_{m_{2^l t_1, 2^l t_2}}|$. 
 
 In the case that $E_1=E_2=[1,2]$ and $m=\hat\sigma_{2d-1}$, the slicing technique developed in \cite{JL} applies equally well to these biparameter operators and implies satisfying $L^p$ improving bounds (as well as continuity estimates and sparse bounds, if one follows a similar strategy as in \cite{BFOPZ}). However, once one considers the general case of $E_1, E_2$, such slicing becomes significantly less effective even when $m=\hat\sigma_{2d-1}$, making the study of these biparameter bilinear operators very different from their one-parameter counterparts. We will show how our one-parameter arguments can be extended to the biparameter setting in Section \ref{sec: biparameter}. 
 
 Moreover, we obtain sparse domination for the multi-scale maximal function of $\mathcal{A}^{(2)}_{m, E_1, E_2}$. This may come as a surprise since it is well known that sparse domination techniques usually do not work well in the multiparameter setting (see for instance \cite{BCOR} for a counterexample to sparse bound of the multiparameter strong maximal function). However, our situation here is in fact slightly different. Since both $E_1, E_2$ are localized at roughly the same scale, the multi-scale maximal function in some sense only exhibits one-parameter behavior when it comes to sparse domination. Hence, the key step in the proof of the sparse bound is the continuity estimate for the single-scale operator $\mathcal{A}^{(2)}_{m,E_1, E_2}$. We defer the statements of these results to Section \ref{sec: biparameter}.

\subsection*{Main results of the article}We will be looking for Sobolev smoothing bounds with decay for the pieces of the single-scale operator $\mathcal{A}_{m,E}$, which in turn will lead to continuity estimates at the exponent $(2,2,2)$, that is, bounds of the form 
$$\|\mathcal{A}_{m,E}(f-\tau_{h_1}f,g-\tau_{h_2}g)\|_{L^{2}}\lesssim |h_1|^{\gamma_1}|h_2|^{\gamma_2}\|f\|_{L^2}\|g\|_{L^2},$$
where $|h_1|,\,|h_2|<1$, $\gamma_1, \gamma_2>0$ and $\tau_h(f)(x)=f(x-h)$, whose corollaries include sparse domination results for  $\mathcal{M}_{m,E}$. 

Sobolev smoothing bounds for bilinear single-scale Radon type operators are not only interesting in their own right but also are particularly useful in deriving continuity estimates. In our previous work \cite{BFOPZ}, a different Sobolev bound $H^{-\delta}\times L^2 \to L^1$ was obtained for $\mathcal{A}_{\hat\sigma_1, 1}$ in the $d=1$ case, which played a key role in obtaining continuity estimates of it and then sparse bound for its corresponding multi-scale maximal operator there. However, the proof of that Sobolev bound required strong machinery (trilinear smoothing estimate from \cite{MCZZ}) and the passage from it to continuity estimates was also quite involved. In contrast, the Sobolev estimates obtained in this paper (for example Theorem \ref{sobolevE} and \ref{sobolevtrianglethm}) are of the form $H^{-s_1}\times H^{-s_2}\to L^2$. An advantage of this type of results is that it enables us to have a much simplified derivation of the continuity estimates and allows for general dilation sets $E$ and general bilinear multipliers (with admissible decay) in the theory. For comparison, the slicing method applied in \cite{BFOPZ} replies heavily on the geometry of the sphere and the dilation set $[1,2]$, and does not seem to yield a natural extension to the more general situations.

Our first theorem concerns the Sobolev norm bound of the single-scale maximal operator $\mathcal{A}_{m,E}$ associated to a fractal subset $E\subset [1,2]$. Here and throughout this paper for $s\geq 0$, $H^{-s}$ consists of functions $f:\R^d\rightarrow \C$ for which $\|f\|_{H^{-s}}:=\|(1+|\xi|^2)^{-s/2}\hat{f}(\xi)\|_{L^2}<\infty$.

\begin{thm}[Sobolev smoothing bounds for $\mathcal{A}_{m,E}$]\label{sobolevE} Let $d\geq 1$ and $E\subset [1,2]$ with upper Minkowski dimension $\text{dim}_{M}(E)=\beta$. Let $m$ be an $a$-admissible bilinear multiplier up to order $1$, and assume $2a>d+\beta$. Given $s_1,s_2\geq 0$, with $s_1+s_2<\frac{2a-d-\beta}{2}$, then
\begin{equation}
      \|\mathcal{A}_{m,E}(f,g)\|_{L^2}\lesssim \|f\|_{H^{-s_1}}\|g\|_{H^{-s_2}}.
\end{equation}
In particular, $\mathcal{A}_{m,E}:L^{2}\times L^{2}\rightarrow L^{2}$.
\end{thm}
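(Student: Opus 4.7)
The strategy is a Littlewood--Paley decomposition combined with a fractal maximal function argument. Fix a partition of unity so that $m = \sum_{j \ge 0} m_j$ with $m_j$ frequency-localized to $\{|(\xi,\eta)|\sim 2^j\}$ (and $m_0$ the low-frequency piece), giving $|m_j|\lesssim 2^{-ja}$ on its support by the admissibility hypothesis. Since $\mathcal{A}_{m,E}(f,g)\le \sum_j \mathcal{A}_{m_j,E}(f,g)$, it suffices to bound each piece by $C\,2^{-\eta j}\|f\|_{H^{-s_1}}\|g\|_{H^{-s_2}}$ for some $\eta > 0$ and sum. The analysis of $\mathcal{A}_{m_j,E}$ splits into a single-scale $L^2$ estimate for $T_{(m_j)_t}(f,g)$ at fixed $t\in[1,2]$, and a maximal step exploiting $\dim_M E = \beta$.

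For the single-scale estimate, write $F_j(t,x):= T_{(m_j)_t}(f,g)(x)$ and apply Plancherel in the output frequency $\zeta = \xi+\eta$:
\begin{equation*}
\|F_j(t,\cdot)\|_{L^2}^2 = \int_{\R^d} \Bigl|\int_{\R^d} \hat{f}(\xi)\,\hat{g}(\zeta-\xi)\, m_j\bigl(t\xi, t(\zeta-\xi)\bigr)\,d\xi\Bigr|^2 d\zeta.
\end{equation*}
On the support of the inner integrand, $|\xi|,|\zeta-\xi|\lesssim 2^j$; absorbing the Sobolev weights $(1+|\xi|^2)^{s_1/2}\lesssim 2^{js_1}$ and $(1+|\eta|^2)^{s_2/2}\lesssim 2^{js_2}$ into the multiplier and applying Cauchy--Schwarz to the $\xi$-integration (whose support for each fixed $\zeta$ has $d$-dimensional Lebesgue measure $\lesssim 2^{jd}$) yields
\begin{equation*}
\sup_{t\in[1,2]}\|F_j(t,\cdot)\|_{L^2} \lesssim 2^{j(s_1 + s_2 - a + d/2)}\|f\|_{H^{-s_1}}\|g\|_{H^{-s_2}} =: A_j.
\end{equation*}
Differentiation in $t$ brings down a factor $|(\xi,\eta)|\sim 2^j$ from $\nabla m_j(t\xi,t\eta)\cdot(\xi,\eta)$ (this is where the order-$1$ admissibility enters), so the same argument delivers $\sup_t\|\partial_t F_j(t,\cdot)\|_{L^2}\lesssim 2^j A_j$.

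For the maximal step, I would use the elementary interval inequality $\sup_{s\in I}|F(s)|^2 \le \tfrac{1}{|I|}\int_I |F|^2 + 2\int_I |F||F'|$ (proved by choosing $s_0\in I$ where $|F(s_0)|^2$ is at most the average and applying the fundamental theorem of calculus) on each interval of a cover of $E$ by $\lesssim \delta^{-\beta}$ intervals of length $\delta$, valid since $\dim_M E = \beta$. Summing over the cover, taking $L^2(x)$ norms, and using Cauchy--Schwarz in $t$ together with $|E(\delta)|\lesssim \delta^{1-\beta}$ gives
\begin{equation*}
\|\mathcal{A}_{m_j,E}(f,g)\|_{L^2}^2 \lesssim \bigl(\delta^{-\beta} + 2^j \delta^{1-\beta}\bigr) A_j^2.
\end{equation*}
The two terms balance at $\delta = 2^{-j}$, yielding $2^{j\beta}A_j^2 = 2^{j(2s_1+2s_2 -2a + d + \beta)}\|f\|_{H^{-s_1}}^2\|g\|_{H^{-s_2}}^2$; under the hypothesis $s_1+s_2<(2a-d-\beta)/2$ this exponent is strictly negative, so summing a geometric series in $j\ge 0$ closes the argument. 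The key obstacle, relative to the linear case where Plancherel converts $\|m\|_\infty$ directly into an $L^2$ bound, is the unavoidable $2^{jd/2}$ loss from the bilinear Plancherel step; this loss is precisely what forces the admissibility threshold $2a>d+\beta$ (rather than $2a>\beta$) and couples the Sobolev smoothing exponent to both the dimension $d$ and the Minkowski dimension of the dilation set.
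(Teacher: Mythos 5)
Your proof is correct and reaches the same threshold $s_1+s_2<\tfrac{2a-d-\beta}{2}$, but the decomposition you use is genuinely different from the paper's. The paper splits the inputs: writing $\hat{f^i}(\xi)=\hat{f}(\xi)\hat{\psi}(2^{-i}\xi)$ and $\hat{g^j}(\eta)=\hat{g}(\eta)\hat{\psi}(2^{-j}\eta)$, it first establishes (Theorem \ref{piecesofA_E}) a decay bound on the bilinear pieces $\mathcal{A}_{m,E}(f^i,g^j)$ with explicit dependence on $\max\{i,j\}$ and $\min\{i,j\}$, and then sums the double series over $(i,j)$ by a case analysis ($i\ge j$ versus $i<j$) plus Cauchy--Schwarz in each index. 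You instead decompose the multiplier itself into annular pieces $m_j$ supported where $|(\xi,\eta)|\sim 2^j$, absorb the Sobolev weights $(1+|\xi|^2)^{s_1/2}\lesssim 2^{js_1}$, $(1+|\eta|^2)^{s_2/2}\lesssim 2^{js_2}$ directly into the single-scale bound, and sum a single geometric series in $j$. In terms of the paper's indices, your piece $m_j$ aggregates all of the paper's pieces with $\max\{i,j\}$ fixed, so your Cauchy--Schwarz in $\xi$ (with the trivial measure bound $2^{jd}$) is a priori slightly lossier than the paper's finer Minkowski-then-Cauchy--Schwarz; the loss is harmless because the sum over the min index in the paper's approach is already dominated by the top of the annulus. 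Both proofs then handle the supremum over $E$ the same way in substance: a cover by $\lesssim_\epsilon\delta^{-\beta-\epsilon}$ intervals of length $\delta$, a fundamental-theorem-of-calculus bound on $\sup_{I}|F|^2$ (your pigeonhole-plus-FTC inequality versus the paper's smooth cutoffs $\alpha_I$, which are equivalent), a Cauchy--Schwarz pairing of $\|F_j(t,\cdot)\|_2$ with $\|\partial_t F_j(t,\cdot)\|_2\lesssim 2^j A_j$, and the optimal choice $\delta=2^{-j}$. Your formulation is cleaner for proving this particular Sobolev estimate since it avoids the two-index bookkeeping; the paper carries the finer $(i,j)$-decomposition because Theorem \ref{piecesofA_E} is a standalone statement that gets used elsewhere. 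Two small points worth writing out explicitly if you flesh this into a full proof: the admissibility of order $1$ is what controls $\nabla m$ and hence $\partial_t m_j$ for $j\ge 1$ (for $j=0$ you only need the derivative of the smooth cutoff); and the Minkowski dimension hypothesis only gives $N(E,\delta)\lesssim_\epsilon\delta^{-\beta-\epsilon}$, so the geometric series closes only after choosing $\epsilon$ small relative to $\tfrac{2a-d-\beta}{2}-(s_1+s_2)>0$.
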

\begin{rem}
    For the case $E=\{1\}$ one can actually include the endpoint $s_1+s_2=\frac{2a-d-\beta}{2}$ and assume the weaker condition that $m$ is an admissible multiplier of order $0$. See Proposition \ref{sobolev}.
\end{rem}

In the case of the single-scale bilinear maximal operators associated to triangle averages $\mathcal{T}_E$, the Sobolev smoothing estimates for $\mathcal{T}_E$ state as follows.

\begin{thm}[Sobolev smoothing bounds for $\mathcal{T}_E$] \label{sobolevtrianglethm}
Let $E\subset [1,2]$ with upper Minkowski dimension  $\text{dim}_M(E)=\beta$, and assume that $d>4+\beta$. Given $s_1,s_2\geq 0$, with $s_1+s_2<\frac{d-4-\beta}{2}$, then
    \begin{equation}
        \|\mathcal{T}_E(f,g)\|_2\lesssim \|f\|_{H^{-s_1}}\|g\|_{H^{-s_2}}.
    \end{equation}
\end{thm}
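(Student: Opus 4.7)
The key quantitative observation is that, although the triangle multiplier $\hat\mu$ is not $a$-admissible for any $a > d/2$, after an angular decomposition it effectively behaves like an $a$-admissible multiplier with $a = d-2$; substituting this value into the range $s_1 + s_2 < (2a - d - \beta)/2$ of Theorem~\ref{sobolevE} reproduces exactly the range $s_1 + s_2 < (d-4-\beta)/2$ in the statement. The plan is therefore to mimic the proof of Theorem~\ref{sobolevE}, inserting the angular decomposition at the single-scale step to recover the missing $(d-2)$ powers of decay coming from the $|\sin\theta|$-factor in (\ref{decaytriangle}).

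First I would Littlewood-Paley decompose $f = \sum_k f_k$, $g = \sum_l g_l$ and reduce to bounding each block $\|\sup_{t\in E}|T_{m_t}(f_k,g_l)|\|_{L^2}$. On the frequency block $|\xi|\sim 2^k$, $|\eta|\sim 2^l$, introduce a smooth angular partition of unity $1 = \sum_{0\le j\le \min(k,l)}\phi_j(\theta) + \phi_{>\min(k,l)}(\theta)$ in the angle $\theta = \theta(\xi,\eta)$, where $\phi_j$ is supported in $|\sin\theta|\sim 2^{-j}$; the upper truncation at $j = \min(k,l)$ matches the angular resolution available on the block and guarantees that the cutoffs have $L^\infty$-bounded derivatives in $\xi,\eta$. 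The Plancherel--Young-type bilinear multiplier estimate used in the proof of Theorem~\ref{sobolevE}, combined with the decay (\ref{decaytriangle}) and the volume bound $\lesssim 2^{\min(k,l)d}2^{-j(d-1)}$ for the angular-constrained frequency support, yields
$$
\|T_{m_t\phi_j}(f_k,g_l)\|_{L^2} \lesssim 2^{-\max(k,l)\frac{d-2}{2}}\bigl(1+2^{\min(k,l)-j}\bigr)^{-\frac{d-2}{2}}\cdot 2^{\min(k,l)\frac{d}{2}}\,2^{-j\frac{d-1}{2}}\|f_k\|_{L^2}\|g_l\|_{L^2},
$$
uniformly in $t\in[1,2]$. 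The geometric sum over $j$ is dominated at $j=0$ and produces an aggregate single-scale bound of size $\lesssim 2^{-\max(k,l)(d-2)/2 + \min(k,l)}\|f_k\|_{L^2}\|g_l\|_{L^2}$, which in the diagonal case $k=l$ is $2^{k(4-d)/2}\|f_k\|_{L^2}\|g_l\|_{L^2}$, matching the output of Theorem~\ref{sobolevE} at $a = d-2$.

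Second, I would pass to the supremum over $E$ exactly as in Theorem~\ref{sobolevE}: cover $E$ by $N_\delta\sim\delta^{-\beta}$ intervals of length $\delta$, apply the fundamental theorem of calculus in $t$, and use the frequency-localized derivative bound $\|\partial_t T_{m_t\phi_j}(f_k,g_l)\|_{L^2}\lesssim 2^{\max(k,l)}$ times the single-scale block bound above. This derivative control follows from the order-$1$ admissibility of $\hat\mu$ (which in particular gives $|\nabla_{\xi,\eta}\hat\mu|$ the same pointwise decay as $\hat\mu$) together with the $t$-independence of the angular cutoffs $\phi_j$. Optimizing in $\delta$ extracts the extra factor $2^{\max(k,l)\beta/2}$. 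Absorbing Sobolev weights via $\|f_k\|_{L^2}\sim 2^{ks_1}\|f_k\|_{H^{-s_1}}$ and summing over $k,l$, the resulting geometric series converges precisely when $s_1 + s_2 < (d-4-\beta)/2$.

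The main obstacle I expect is the careful bookkeeping of the angular decomposition: verifying that the derivatives of $\phi_j$ stay bounded throughout the block (requiring $j\le\min(k,l)$), that the $j$-summation interacts cleanly with the Minkowski-dimension argument via the order-$1$ admissibility, and that the tail piece $\phi_{>\min(k,l)}$ (where no angular gain is available) is dominated by the $j=\min(k,l)$ term. A secondary subtlety is the off-diagonal case $k\neq l$: there one exploits Littlewood--Paley orthogonality on the output side, using that the frequency support of $T_{m_t}(f_k,g_l)$ concentrates near $|\xi+\eta|\sim 2^{\max(k,l)}$. Finally, the low-frequency regime $\min(|\xi|,|\eta|)\lesssim 1$, where the angular gain is unavailable, is handled trivially using the boundedness of $\hat\mu$.
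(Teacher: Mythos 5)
Your proposal is correct and matches the paper's own proof in all essential respects: Littlewood--Paley decomposition in each input, an angular decomposition in $|\sin\theta|$ truncated at $\min$ of the two frequency scales (with a tail term), use of the decay (\ref{decaytriangle}) together with the support volume bound $\lesssim 2^{\min(i,j)d}2^{-k(d-1)}$ to get the single-scale block estimate, then the interval cover of $E$ plus the fundamental theorem of calculus with the $2^{-\max(i,j)}\partial_t$ normalization, and finally the summation as in Theorem~\ref{sobolevE}. One small caveat: your appeal to ``Littlewood--Paley orthogonality on the output side'' for the off-diagonal blocks is neither needed nor quite available here, since the output of $\sup_{t\in E}|T_{m_t}(f_k,g_l)|$ is not frequency localized; the paper simply uses the pointwise triangle inequality $\mathcal{T}_E(f,g)\le\sum_{i,j}\mathcal{T}_E(f^i,g^j)$ followed by H\"older in $(i,j)$, which your decay estimate $2^{-\max(k,l)(d-4)/2}$ already supports, so the argument closes without orthogonality.
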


\begin{rem}
    Observe that in the result above since $d$ is integer, the restriction $d>4+\beta$ becomes $d\geq 5$ for $\beta\in [0,1)$, and $d\geq 6$ for $\beta=1$. 
\end{rem}

The Sobolev smoothing theorem for $\mathcal{A}_{m,E}$ (Theorem \ref{sobolevE}) easily implies continuity estimates for $\mathcal{A}_{m,E}$ which we state and prove in Corollary \ref{continuityA_E}. Similarly, Theorem \ref{sobolevtrianglethm} implies continuity estimates for $\mathcal{T}_E$ (see Corollary \ref{continuityTE}). For the case $m=\hat{\mu}_S$ where $S$ is a $(2d-1)$-dimensional compact smooth hypersurface without boundary in $\R^{2d}$ with $k$ non-vanishing principal curvatures, Theorem \ref{sobolevE} combined with the decay bounds in inequality (\ref{decaysurfacemulitplier}) immediately imply the following corollary.

\begin{cor}[Consequences for the surface averaging operator $\mathcal{A}_{\hat\mu_S,E}$]\label{corollaryforsurfacesgenE} Let $d\geq 2$ and $E\subset [1,2]$ with upper Minkowski dimension $\text{dim}_{M}(E)=\beta$. 
\begin{enumerate}
    \item For  $m(\xi,\eta)=\hat{\sigma}_{2d-1}$, if $d>1+\beta$ (which is always the case for $d\geq 3$), then, one has that
    \begin{equation}
\|\mathcal{A}_{E}(f,g)\|_{L^2}\lesssim \|f\|_{H^{-s_1}}\|g\|_{H^{-s_2}},
    \end{equation}
     for any $s_1, s_2\geq 0$ with $s_1+s_2<\frac{d-1-\beta}{2}$. Moreover, continuity estimates like (\ref{continuityoneinputforE}) and (\ref{continuityinbothforE}) hold true for any $d\geq 2$ and $\beta\in [0,1]$, except possibly for $(d,\beta)=(2,1)$.
    \item  For a $(2d-1)$-dimensional compact smooth surface $S$ in $\R^{2d}$ without boundary such that $k$ of the $(2d-1)$ principal curvatures do not vanish, and $k>d+\beta$, one has that
\begin{equation}
\|\mathcal{A}_{\hat\mu_S,E}(f,g)\|_{L^2}\lesssim \|f\|_{H^{-s_1}}\|g\|_{H^{-s_2}},
\end{equation} for any $s_1,s_2\geq 0$ with $s_1+s_2<\frac{k-d-\beta}{2}$.
Moreover, continuity estimates like (\ref{continuityoneinputforE}) and (\ref{continuityinbothforE}) hold true for any $d\geq 2$ and $k>d+\beta$.
\end{enumerate}
   
\end{cor}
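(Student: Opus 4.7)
The plan is to simply substitute the known surface decay estimates into Theorem \ref{sobolevE}; the corollary is essentially a hypothesis-checking exercise. Part (2) is the general statement, and part (1) will then follow as the special case $S=S^{2d-1}$ with $k=2d-1$.

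First I would handle part (2). By the classical Littman stationary phase estimate cited in (\ref{decaysurfacemulitplier}), for a $(2d-1)$-dimensional compact smooth hypersurface $S\subset\R^{2d}$ with $k$ non-vanishing principal curvatures, every derivative of $\hat{\mu}_S$ obeys
\begin{equation*}
|\partial^{\alpha}(\hat{\mu}_S)(\xi,\eta)|\leq C_{\alpha}(1+|(\xi,\eta)|)^{-k/2},
\end{equation*}
so $\hat{\mu}_S$ is $a$-admissible with $a=k/2$ (in fact to every order, as pointed out after (\ref{admissiblemeasure})). Under the assumption $k>d+\beta$ we have $2a=k>d+\beta$, so the hypothesis of Theorem \ref{sobolevE} is met. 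Plugging $a=k/2$ into the allowed Sobolev range $s_1+s_2<\frac{2a-d-\beta}{2}$ yields exactly $s_1+s_2<\frac{k-d-\beta}{2}$, which is the claimed bound.

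Next I would specialize to part (1). For $S=S^{2d-1}$, every principal curvature is non-zero, so $k=2d-1$. The condition $k>d+\beta$ becomes $2d-1>d+\beta$, i.e.\ $d>1+\beta$; since $\beta\in[0,1]$, this is automatic for $d\geq 3$ and holds for $d=2$ exactly when $\beta<1$. Part (2) then gives the Sobolev bound in the range $s_1+s_2<\frac{(2d-1)-d-\beta}{2}=\frac{d-1-\beta}{2}$, matching the statement.

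Finally, for the ``moreover'' clauses about continuity estimates of the form (\ref{continuityoneinputforE}) and (\ref{continuityinbothforE}), I would invoke Corollary \ref{continuityA_E}, which is already flagged as the standard consequence of Sobolev smoothing with a positive allowed exponent. The input to that corollary is precisely a Sobolev bound at $(2,2,2)$ with some strictly positive $s_1,s_2$, which requires $\frac{2a-d-\beta}{2}>0$; in the surface-averaging setting this is the condition $k>d+\beta$ (part (2)) and in the sphere case it is $d>1+\beta$ (part (1)). The only excluded pair in part (1) is $(d,\beta)=(2,1)$, where $d-1-\beta=0$ leaves no room for a strictly positive Sobolev exponent. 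The main (and essentially only) subtlety is to verify that the open sum condition $s_1+s_2<\frac{k-d-\beta}{2}$ suffices to feed Corollary \ref{continuityA_E} and recover both continuity in one input and continuity in both inputs; this is precisely what that corollary is designed to extract, so no further work is needed beyond citing it.
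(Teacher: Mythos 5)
Your proof is correct and follows exactly the route the paper intends: substitute the Littman decay estimate (\ref{decaysurfacemulitplier}) to get $a=k/2$ (with admissibility to all orders automatic since $\hat\mu_S$ is the Fourier transform of a compactly supported measure), feed this into Theorem \ref{sobolevE}, and invoke Corollary \ref{continuityA_E} for the continuity estimates. The specialization to $k=2d-1$ for the sphere and the observation about the excluded case $(d,\beta)=(2,1)$ are exactly as intended.
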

\begin{rem}
    Observe that in the second part of Corollary \ref{corollaryforsurfacesgenE}, since $k$ is an integer the assumption $k>d+\beta$ can be described as $k\geq d+1$ for $\beta\in[0,1)$, and $k\geq d+2$ if $\beta=1$.
\end{rem}

In the particular case where $m=\hat{\mu}$ where $\mu$ is a compactly supported finite measure in $\R^{2d}$, the continuity estimates given by Corollary \ref{continuityA_E} will have sparse bounds consequences for multi-scale bilinear maximal functions $\mathcal{M}_{\hat{\mu},E}$ which we state below. 

For such a given measure $\mu$, consider the $L^{p}$ improving boundedness region of $\mathcal{A}_{\hat{\mu},E}$, namely
\begin{equation}\label{regionAE}
\begin{split}
    \mathcal{R}(\mu,E):=\{(1/p,1/q,1/r)&\colon 1\leq p,q\leq \infty, r>0,\, 1/p+1/q\geq 1/r\\
    &\text{ and }\mathcal{A}_{\hat{\mu},E}:L^{p}\times L^q\rightarrow L^r\text{ is bounded}\}.  
\end{split}
\end{equation}

\begin{cor}[Sparse bounds for $\mathcal{M}_{\hat{\mu},E}$]\label{sparseboundME}
Let $E\subset[1,2]$ with upper Minkowski dimension $\beta$ and let $\mu$ be a compactly supported finite Borel measure in $\R^{2d}$. Suppose $\hat{\mu}$ is an $a$-admissible bilinear multiplier, with $2a>d+\beta$. Then, for any $(1/p,1/q,1/r)\in \text{int} (\mathcal{R}(\mu,E))$, with $r>1$ and $p,q\leq r$, one has sparse domination for $\mathcal{M}_{\hat{\mu},E}$ with parameters $(p,q,r')$. That is, for any $f,g,h\in C^{\infty}_{0}(\R^d)$, there exists a sparse family $\mathcal{S}$ such that 
\begin{equation}
    \begin{split}
|\langle\mathcal{M}_{\hat{\mu},E}(f,g),h\rangle|\lesssim \sum_{Q\in \mathcal{S}} \langle f \rangle_{Q,p} \langle g\rangle_{Q,q}  \langle h\rangle_{Q,r'} |Q|.
    \end{split}
\end{equation}
\end{cor}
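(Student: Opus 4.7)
The plan is to follow the standard two-step machinery that has been adapted to the bilinear setting in \cite{BFOPZ, sparsetriangle}: first convert boundedness plus continuity estimates for the single-scale operator $\mathcal{A}_{\hat{\mu},E}$ into sparse domination for its multi-scale maximal analogue $\mathcal{M}_{\hat{\mu},E}$, and then supply the required continuity estimates by combining the Sobolev smoothing bound of Theorem \ref{sobolevE} with the $L^p$ improving bounds that encode the region $\mathcal{R}(\mu,E)$.

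First, I would record that Theorem \ref{sobolevE} gives, at the exponent $(2,2,2)$, a bound of the form $\|\mathcal{A}_{\hat{\mu},E}(f,g)\|_{L^2}\lesssim \|f\|_{H^{-s_1}}\|g\|_{H^{-s_2}}$ with $s_1,s_2>0$ and $s_1+s_2<\frac{2a-d-\beta}{2}$. Via the argument that produces Corollary \ref{continuityA_E} (standard differencing in frequency, which converts Sobolev smoothing with positive regularity loss into continuity), this yields
\begin{equation*}
\|\mathcal{A}_{\hat{\mu},E}(f-\tau_{h_1}f,g-\tau_{h_2}g)\|_{L^2}\lesssim |h_1|^{\gamma_1}|h_2|^{\gamma_2}\|f\|_{L^2}\|g\|_{L^2}
\end{equation*}
for some $\gamma_1,\gamma_2>0$, whenever $2a>d+\beta$. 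This is the ``base'' continuity estimate at $(2,2,2)$.

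Next, fix $(1/p,1/q,1/r)\in\mathrm{int}(\mathcal{R}(\mu,E))$ with $r>1$ and $p,q\leq r$. Since $\mathcal{A}_{\hat{\mu},E}$ is a sublinear maximal operator with a small local support in the $t$-variable, one may freeze one factor at a time and apply multilinear (real) interpolation between the continuity estimate at $(2,2,2)$ and the trivial absolute bounds for $\mathcal{A}_{\hat{\mu},E}$ on an open neighborhood of $(1/p,1/q,1/r)$ inside $\mathcal{R}(\mu,E)$. Because the trivial bounds cost no decay in $|h_1|,|h_2|$ while the base estimate carries a power gain, the resulting interpolated estimate at $(p,q,r)$ still carries positive exponents $\gamma_1',\gamma_2'>0$, i.e.
\begin{equation*}
\|\mathcal{A}_{\hat{\mu},E}(f-\tau_{h_1}f,g-\tau_{h_2}g)\|_{L^r}\lesssim |h_1|^{\gamma_1'}|h_2|^{\gamma_2'}\|f\|_{L^p}\|g\|_{L^q}.
\end{equation*}
This is exactly the kind of continuity estimate that the abstract sparse domination framework of \cite{Lacey} (bilinear version in \cite{BFOPZ, sparsetriangle}) takes as input.

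Finally, I would invoke the bilinear sparse domination principle: given the $L^p\times L^q\to L^r$ boundedness of $\mathcal{A}_{\hat{\mu},E}$ together with the continuity estimate above, and given that $\hat{\mu}_t(f,g)(x)=\int f(x-ty)g(x-tz)\,d\mu(y,z)$ has compactly supported kernel so the localization axioms needed by the machinery are automatic, one obtains a sparse form with parameters $(p,q,r')$ for the multi-scale operator $\mathcal{M}_{\hat{\mu},E}$, proving the stated inequality. The main technical obstacle will be the interpolation step: one must verify that the open neighborhood of $(1/p,1/q,1/r)$ inside $\mathcal{R}(\mu,E)$ can be genuinely used (this is where the hypothesis $(1/p,1/q,1/r)\in\mathrm{int}(\mathcal{R}(\mu,E))$ and $r>1$, $p,q\leq r$ enter), and that the continuity exponents interpolate correctly so as to remain strictly positive, which is precisely why one needs an interior point rather than a boundary point of $\mathcal{R}(\mu,E)$.
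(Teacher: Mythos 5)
Your proposal is correct and matches the paper's approach essentially exactly: deriving continuity at $(2,2,2)$ from the Sobolev smoothing estimate (Theorem \ref{sobolevE} via Corollary \ref{continuityA_E}), interpolating with boundedness at points of $\mathcal{R}(\mu,E)$ to get continuity at interior points (this is the paper's Proposition \ref{prop:generalconinuityAE}), and then invoking the bilinear sparse machinery from \cite{BFOPZ, sparsetriangle}. The only small omission is that the paper's standing assumption $(0,0)\notin\mathrm{supp}(\mu)$ (which yields the positivity condition \eqref{conditiononsuppofmu}, needed for the analogue of Lemma 19 in \cite{BFOPZ}) is slightly stronger than ``compactly supported kernel,'' but that is a contextual detail rather than a gap in the core argument.
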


These sparse bounds have interesting weighted bounds consequences. In particular, we can get the following Lebesgue bounds for  $\mathcal{M}_{\hat{\mu},E}$, by combining these consequences in the Lebesgue measure case and some further techniques.

\begin{thm}[Lebesgue bounds for $\mathcal{M}_{\hat{\mu},E}$]\label{Lebesgueboundsmultiscale} Let $E\subset [1,2]$ be a subset with upper Minkowski dimension $\beta$ and $\hat{\mu}$, the Fourier transform of a compactly supported finite Borel measure in $\mathbb{R}^{2d}$ that is $a$-admissible. Suppose  $2a>d+\beta$. Let $R(\beta,a)\subset [0,1]^{2}$ be the interior of the convex closure of the points 
$$(1,0),\,(0,0),\,(0,1),\,\left(\frac{1}{2},\frac{2a-\beta}{2d}\right),\text{ and }\left(\frac{2a-\beta}{2d},\frac{1}{2}\right).$$ 
Let $L_1$ be the open segment connecting $(1,0)$ to $(0,0)$, and $L_{2}$ be the open segment connecting $(0,0)$ to $(0,1)$. Then for any $(1/p,1/q)\in \mathcal{R}(\beta,a)\cup L_1\cup L_2\cup \{(0,0)\}$, we have
$$\mathcal{M}_{\hat{\mu},E}:L^{p}\times L^{q}\rightarrow L^{r}$$
for $r$ given by the H\"older relation $1/r=1/p+1/q$ (see Figure \ref{pictureforMEregion}).
\end{thm}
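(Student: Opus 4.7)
The plan is to split the target region $R(\beta, a) \cup L_1 \cup L_2 \cup \{(0,0)\}$ into three pieces --- the origin, the open axis segments, and the interior of the pentagon --- and to treat each piece separately, using direct linear-theory arguments for the first two and the sparse domination machinery of Corollary \ref{sparseboundME} (fed by Theorem \ref{sobolevE} and Sobolev embedding) for the third.

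I would first dispose of $(0,0)$ and the open segments. At $(0,0)$, the bound $\|\mathcal{M}_{\hat{\mu}, E}(f,g)\|_{L^\infty} \leq \|\mu\|\|f\|_\infty\|g\|_\infty$ is immediate since $|\mu|(\R^{2d})<\infty$. On $L_1$, when $g\in L^\infty$ one has the pointwise domination
\[
\mathcal{M}_{\hat{\mu}, E}(f, g)(x) \leq \|g\|_\infty \sup_{l \in \Z,\, t \in E}\bigl(|f| \ast \nu_{t2^l}\bigr)(x),
\]
where $\nu$ is the first-coordinate marginal of $|\mu|$ (a compactly supported finite Borel measure on $\R^d$); the right side is a linear lacunary-type maximal operator associated to $\nu$, which is bounded on $L^p$ for every $p>1$ by classical Coifman--Weiss theory. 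The segment $L_2$ is symmetric.

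For the interior of the pentagon, I would apply Corollary \ref{sparseboundME}, which requires a rich enough supply of single-scale points $(1/p_0, 1/q_0, 1/r_0) \in \mathcal{R}(\mu, E)$. The key input is Theorem \ref{sobolevE} combined with the Sobolev embedding $L^q \hookrightarrow H^{-s}$ at $s = d(1/q - 1/2)$: this yields $\mathcal{A}_{\hat{\mu}, E}: L^p \times L^q \to L^2$ whenever $p, q \leq 2$ and $1/p + 1/q < \tfrac{1}{2} + \tfrac{2a-\beta}{2d}$, which produces points of $\mathcal{R}(\mu, E)$ approaching the key vertices $(1/2, (2a-\beta)/(2d))$ and $((2a-\beta)/(2d), 1/2)$ in the $(1/p,1/q)$-plane. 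Interpolating these with the trivial bounds $L^\infty \times L^\infty \to L^\infty$ and $L^p \times L^\infty \to L^p$, $L^\infty \times L^q \to L^q$ for $p, q > 1$ (obtained by absorbing the $L^\infty$ factor and dominating by the Hardy--Littlewood maximal function, since $E \subset [1,2]$ is compact) further enlarges $\mathcal{R}(\mu, E)$. Then for each target $(1/p, 1/q)$ in the interior of the pentagon with Hölder exponent $1/r = 1/p + 1/q$, I would select $(1/p_0, 1/q_0, 1/r_0) \in \mathrm{int}(\mathcal{R}(\mu, E))$ satisfying the Corollary \ref{sparseboundME} hypotheses $r_0 > 1$ and $p_0, q_0 \leq r_0$ together with $p_0 < p$, $q_0 < q$, $r_0 > r$, apply Corollary \ref{sparseboundME} to obtain the sparse bound $(p_0, q_0, r_0')$ for $\mathcal{M}_{\hat{\mu}, E}$, and conclude $\mathcal{M}_{\hat{\mu}, E}: L^p \times L^q \to L^r$ via the standard multilinear sparse-to-Lebesgue implication (using its quasi-Banach variant in the regime $r<1$ which occurs near the key vertices).

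The main obstacle will be the geometric bookkeeping: showing that the enlarged $\mathcal{R}(\mu, E)$ is large enough that every interior point of the pentagon admits such a choice of sparse parameters $(p_0, q_0, r_0)$. This is most delicate near the vertices $(1/2, (2a-\beta)/(2d))$ and $((2a-\beta)/(2d), 1/2)$, which lie exactly at the boundary of what Sobolev embedding extracts from Theorem \ref{sobolevE}; the argument succeeds only because Theorem \ref{sobolevE} permits the \emph{strict} inequality $s_1 + s_2 < (2a-d-\beta)/2$, so those vertices can be approached arbitrarily closely from within, which is precisely what the sparse-to-Lebesgue step needs.
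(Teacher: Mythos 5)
Your overall strategy --- extracting single-scale $L^p\times L^q\to L^2$ bounds from Theorem \ref{sobolevE} via Sobolev embedding, feeding these into Corollary \ref{sparseboundME}, and then applying a sparse-to-Lebesgue implication --- is exactly the paper's route (Proposition \ref{prop:LpimprovingforAEtargetL2} plus Corollary \ref{sparseboundME} plus \cite[Prop.~1.2]{CDPO}), and your treatment of $(0,0)$ is fine. But there is a genuine gap in how you propose to cover the full pentagon: you interpolate trivial and Sobolev bounds to enlarge the \emph{single-scale} region $\mathcal{R}(\mu,E)$ and then, for each target $(1/p,1/q)$, seek $(1/p_0,1/q_0,1/r_0)\in\text{int}(\mathcal{R}(\mu,E))$ with $1/p_0>1/p$, $1/q_0>1/q$, $r_0>1$ and $p_0,q_0\le r_0$. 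The hypothesis $q_0\le r_0$ (i.e.\ $1/q_0\ge 1/r_0$), however, caps $1/p_0$ strictly below $c:=\tfrac{2a-\beta}{2d}$: any interpolation pushing $1/p_0$ past $c$ must mix in points drifting toward $(1,0,1)$, which forces $1/r_0>1/q_0$. Concretely, with $c=0.8$ the point $(1/p,1/q)=(0.85,0.2)$ lies in the pentagon interior, yet no admissible sparse triple with $1/p_0>0.85$ exists. So the sparse step alone only reaches $\{0\le 1/p,1/q<c,\ 0<1/p+1/q<\tfrac12+c\}$, a strict subset of the pentagon. The paper closes this by first deriving multi-scale Hölder Lebesgue bounds on that sub-region from the sparse bound, separately proving Banach Hölder bounds $L^p\times L^q\to L^r$ for $1/p+1/q<1$ (Proposition \ref{banachholderbounds}), and then interpolating these two families of \emph{multi-scale} Lebesgue estimates; the convex hull of the two regions in the Hölder plane is the pentagon. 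That final multi-scale interpolation is the missing step. (As a result, the delicate region is near $(1,0)$ and $(0,1)$, not near $(1/2,c)$ and $(c,1/2)$ as you suggest.) Separately, on the axis segments: for a general fractal $E$, $L^p$-boundedness of $\sup_{l,t}(|f|*\nu_{t2^l})$ is not ``classical Coifman--Weiss theory,'' which handles $E=\{1\}$; one needs the Duoandikoetxea--Vargas maximal multiplier theorem, whose hypothesis is precisely the decay condition $2a\ge d+\beta$, as in Proposition \ref{banachholderbounds}. The conclusion is correct, but the cited mechanism is not.
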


\begin{figure}[h]\label{pictureforMEregion}
\begin{center}
         \scalebox{1}{
\begin{tikzpicture}
\draw (-0.3,3.8) node {$\frac{1}{q}$};
\draw (3.8,0) node {$\frac{1}{p}$};
\draw (3,-0.4) node {$1$};
\draw (-0.4,3) node {$1$};
\draw (1.5,-0.5) node{$\frac{1}{2}$};
\draw (-0.5,1.5) node{$\frac{1}{2}$};
\draw (-0.5,2.5) node{$\frac{2a-\beta}{2d}$};
\draw (2.5,-0.5) node{$\frac{2a-\beta}{2d}$};
\draw[purple] (3.4,2.3) node {\small{$\frac{1}{p}+\frac{1}{q}=\frac{1}{2}+\frac{2a-\beta}{2d}$}};

\filldraw[purple!25!white] (1.5,2.5)--(2.5,1.5)--(3,0)--(0,0)--(0,3)--(1.5,2.5);
\draw[purple, dashed, line width=1pt] (0,3)--(1.5,2.5)--(2.5,1.5)--(3,0);

\draw[->,line width=1pt] (-0.2,0)--(3.5,0);
\draw[->,line width=1pt] (0,-0.2)--(0,3.5); 
\draw[purple, line width=1pt] (0,3)--(0,0)--(3,0);

\draw[-,line width=0.75pt] (3,-0.1)--(3,0.1);
\draw[-,line width=0.75pt] (-0.1,3)--(0.1,3);
\draw[-,line width=0.75pt] (1.5,-0.1)--(1.5,0.1);
\draw[-,line width=0.75pt] (-0.1,1.5)--(0.1,1.5);
\draw[-,line width=0.75pt] (-0.1,2.5)--(0.1,2.5);
\draw[-,line width=0.75pt] (2.5,-0.1)--(2.5,0.1);
\draw[dashed] (3,0)--(3,3)--(0,3);
\draw[dashed,purple] (2.5,0)--(2.5,1.5)--(0,1.5);
\draw[dashed,purple] (0,2.5)--(1.5,2.5)--(1.5,0);
\draw[purple,line width=0.75pt] (2.5,1.5) circle (2pt);
\draw[purple,line width=0.75pt] (1.5,2.5) circle (2pt);
\draw[purple,line width=0.75pt] (3,0) circle (2pt);
\draw[purple,line width=0.75pt] (0,3) circle (2pt);
\fill[white] (2.5,1.5) circle (2pt);
\fill[white] (1.5,2.5) circle (2pt);
\fill[white] (3,0) circle (2pt);
\fill[white] (0,3) circle (2pt);
\fill[purple] (0,0) circle (2pt);
\end{tikzpicture}}
\end{center}
\caption{Region of pairs $(1/p,1/q)$ for which Theorem \ref{Lebesgueboundsmultiscale} guarantees $\|\mathcal{M}_{\hat{\mu},E}\|_{L^{p}\times L^{q}\rightarrow L^{r}}<\infty$ for $1/r=1/p+1/q$.}
\end{figure}
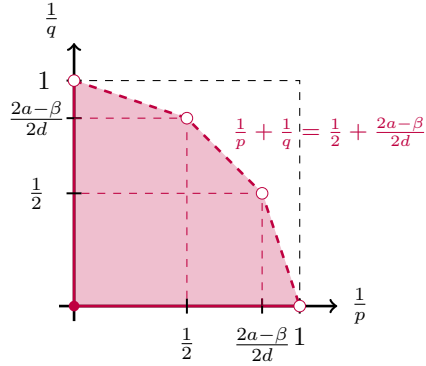

\begin{rem}\label{rmk: Lebesgue compare}
Even though the study of Lebesgue bounds usually precedes that of the sparse bounds for many operators, in our case, Theorem \ref{Lebesgueboundsmultiscale} in fact seems to be the first in literature about Lebesgue bounds of such bilinear averaging operators for general dilation set $E$ and improves many previously best known results for special cases of $\mathcal{M}_{\hat\mu,E}$. For example, in the case of $\mathcal{M}_{\hat\mu, [1,2]}$, Theorem \ref{Lebesgueboundsmultiscale} implies that it maps $L^2\times L^2\to L^1$ as long as $2a>d+1$. This improves \cite[Theorem 1.1]{GHH}, where the same bound is proved under the stronger assumption $2a>d+2$. (Note that the result in \cite{GHH} does have the advantage that it applies to general multipliers $m$ rather than only $m=\hat\mu$. Hence our bound is not fully comparable to theirs. Similar pair of results, where one applies to more general multipliers while the other has stronger bound, exist in the linear setting as well, see \cite{rubiodefrancia}.) 

For the case where $\mu=\mu_S$, the normalized surface measure in a smooth compact surface with $k$ non-vanishing principal curvatures, one can check that 
\begin{itemize}
    \item Theorem \ref{Lebesgueboundsmultiscale} implies $$\mathcal{M}_{\hat\mu_S,[1,2]}(f,g)(x)=\sup_{t>0}\left|\int_{S}f(x-ty)g(x-tz)\,d\mu_S(y,z)\right|$$
    is bounded from $L^{2}\times L^{2}\rightarrow L^{1}$, for $k\geq d+2$. This improves a result from \cite[Theorem 2]{CGHHS} where they proved $\mathcal{M}_{\hat\mu_S,[1,2]}:L^{2}\times L^{2}\rightarrow L^{1}$ for $k\geq d+3$.
    \item For $E\subset [1,2]$ with upper Minkowski dimension $\beta\in [0,1)$, as long as $k\geq d+1$, one has $\mathcal{M}_{\hat\mu_S,E}:L^{2}\times L^{2}\rightarrow L^{1}$. In the particular case $E=\{1\}$, this recovers the result originally proved in \cite{choleeshuin}.  
\end{itemize}
    
\end{rem}

We defer the statements of the necessary conditions for boundedness of $\mathcal{A}_{m,E}$, the Sobolev smoothing estimates, continuity estimates, and sparse domination for the biparameter analogues of $\mathcal{A}_{m,E},\mathcal{M}_{m,E}$ to later sections.

\subsection*{Outline of the article}
In Section \ref{sec:notation}, we collect various fundamental facts about Minkowski and Assouad dimension, as these are the right notion of size for the dilation set in this setting. We also recall the definition of sparse domination of a bilinear operator. We start Section \ref{section222} by studying Sobolev smoothing estimates for $\mathcal{A}_{m,E}$, single-scale bilinear Fourier multipliers with admissible decay that are associated with a general fractal dilation set $E\subset [1,2]$, giving a proof to Theorem \ref{sobolevE}. In our setting, the amount of smoothing depends on the geometry of the hypersurface, since having more non-vanishing principal curvatures gives better decay for the Fourier transform of the surface measure. We then extend this to the case of the single-scale triangle operator $\mathcal{T}_E$, proving Theorem \ref{sobolevtrianglethm}. As corollary, these yield continuity estimates for $\mathcal{A}_{m,E}$ and $\mathcal{T}_E$ which will later be a key ingredient in proving sparse bounds for their associated multi-scale operators. In Section \ref{sec: biparameter},  we make a digression to the even more general biparameter setting, where each argument of the operator has its own associated dilation set. In Section \ref{section:sparsebounds}, we interpolate our previous continuity estimates with known bounds to get a larger range of continuity estimates, allowing us to deduce sparse bounds (such as Corollary \ref{sparseboundME}) for $\mathcal{M}_{\hat{\mu},E}$ and its biparameter analogue, and the triangle maximal operator $\mathcal{T}^*_E$ by appealing to standard techniques. In Section \ref{sec:somelebesguebounds}, we explore various Lebesgue bounds including Theorem \ref{Lebesgueboundsmultiscale} for single-scale and multi-scale maximal operators; many arguments in this section are robust enough to also deduce bounds for the biparameter variants, as well. Finally, in Section \ref{necessaryEsinglescale}, we consider several examples to get necessary conditions for our single-scale maximal operators. 

\subsection*{Acknowledgements}
Y.O. is supported by NSF DMS-2142221 and NSF DMS-2055008. The authors would like to thank Professor Jill Pipher for motivating some of the questions explored in this paper and for many helpful math discussions.

\section{Some notation and preliminaries}\label{sec:notation}

\subsection{Notions of dimension for fractal sets}
Given $E\subset [1,2]$, we denote by $N(E,\delta)$ the minimal number of closed intervals of size $\delta$ that one needs to cover the set $E$. The upper Minkowski dimension of $E$ is given by 
\begin{equation}\label{def:upperminkowski}
    \text{dim}_{M}(E)=\limsup_{\delta\rightarrow 0} \frac{\log(N(E,\delta))}{\log(1/\delta)}.
\end{equation}
Equivalently, if the upper Minkowski dimension of $E$ is $\text{dim}_{M}(E)=\beta$, then $\beta$ is the smallest nonnegative number for which it holds that 
$$N(E,\delta)\leq C_{\eps} \delta^{-(\beta+\eps)},\,\text{ for all }\delta<1\text{ and }\eps>0.$$
Note that to show $\dim_M(E)\le \dim_M(E')$, it is enough to show that there exists a universal constant $C>0$ such that $N(E,\delta)\le CN(E',\delta)$ holds. In our special setting, we can establish some lemmas about how certain algebraic operations affect the upper Minkowski dimension.

\begin{lem}\label{lem:alg of mink dim}
    Let $E\subset [1/2,2]$. Then the upper Minkowski dimension of $E,E^2,\sqrt{E}$ and $1/E$ all agree, where $f(E)$ for a function $f$ denotes
    \[
    f(E)=\{f(t):t\in E\}.
    \]
\end{lem}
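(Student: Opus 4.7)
The plan is to observe that each of the three maps $t\mapsto t^{2}$, $t\mapsto \sqrt{t}$, and $t\mapsto 1/t$ is bi-Lipschitz on the interval $[1/2,2]$, since each is $C^{1}$ with derivative bounded above and bounded away from $0$ on this compact interval. For instance, the derivative of $t\mapsto t^{2}$ lies in $[1,4]$, the derivative of $t\mapsto 1/t$ has absolute value in $[1/4,4]$, and likewise for $\sqrt{t}$. Since bi-Lipschitz maps preserve upper Minkowski dimension, the four dimensions will coincide.

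The single routine fact I need is the following: if $f:[1/2,2]\to\R$ is Lipschitz with constant $L$, then $N(f(E),\delta)\lesssim_{L} N(E,\delta)$ uniformly in $\delta>0$, which by the criterion stated just before the lemma is all one has to check. To prove this, cover $E$ by $N(E,\delta)$ closed intervals of length $\delta$; the $f$-image of each such interval is contained in an interval of length at most $L\delta$. Thus $N(f(E),L\delta)\le N(E,\delta)$, and subdividing each interval of length $L\delta$ into at most $\lceil L\rceil+1$ intervals of length $\delta$ gives $N(f(E),\delta)\le(\lceil L\rceil+1) N(E,\delta)$, which is the desired estimate.

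Applying this statement to each of the three maps above (all bounded Lipschitz on $[1/2,2]$) yields the inequalities $\dim_{M}(f(E))\le \dim_{M}(E)$ for $f(t)=t^{2}, \sqrt{t}, 1/t$. Applying the same statement to the inverses $f^{-1}$ (which are the \emph{other} two maps in the list, or itself in the case of $t\mapsto 1/t$, and are Lipschitz on the relevant target intervals, which all sit inside $[1/4,4]$) gives the reverse inequalities. Combining these shows that all four upper Minkowski dimensions agree.

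No step here looks like a serious obstacle; the only point to be slightly careful about is keeping track of the domains when applying the bi-Lipschitz argument to $f^{-1}$, since after one squaring or reciprocal the set lives in $[1/4,4]$ rather than $[1/2,2]$. This is harmless because the relevant maps remain Lipschitz with uniform constants on the larger interval $[1/4,4]$ as well, so the argument goes through unchanged.
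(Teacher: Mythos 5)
Your argument is correct and follows the same route the paper takes: the paper's proof is the single sentence that bi-Lipschitz invariance of upper Minkowski dimension gives the result, and you have supplied exactly that, verifying that the three maps are bi-Lipschitz on $[1/2,2]$ and proving the covering-number comparison that underlies the invariance. The only thing worth noting is that your bound $N(f(E),\delta)\le(\lceil L\rceil+1)N(E,\delta)$ is slightly generous (at most $\lceil L\rceil$ subintervals of length $\delta$ suffice to cover an interval of length $L\delta$), but this is harmless.
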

\begin{proof}
    These all follow immediately from the invariance of upper Minkowski dimension under bi-Lipschitz maps.
\end{proof}
Understanding how the upper Minkowski dimension can change is more complicated for binary operations on sets. We do have the following well-known bounds for a Minkowski sum of two sets.
\begin{lem}\label{lem:mink sumset}
    Let $E_1,E_2$ be bounded subsets of $\R$. Then
    \[
    \max\{\dim_M(E_1),\dim_M(E_2)\}\le \dim_M(E_1+E_2)\le \dim_M(E_1)+\dim_M(E_2).
    \]
\end{lem}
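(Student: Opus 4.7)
The plan is to prove each inequality separately, using only the definition of upper Minkowski dimension via covering numbers together with the remark made in the excerpt that it suffices to compare $N(E,\delta)$ up to a universal constant.

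For the lower bound $\max\{\dim_M(E_1),\dim_M(E_2)\}\le \dim_M(E_1+E_2)$, I would fix any point $x_0\in E_2$ (which exists as long as $E_2$ is nonempty; the case of empty sets is trivial) and observe that $E_1+x_0\subseteq E_1+E_2$. Since a translation is a bijective isometry of $\R$, it preserves minimal covers by $\delta$-intervals, so $N(E_1,\delta)=N(E_1+x_0,\delta)\le N(E_1+E_2,\delta)$ for every $\delta>0$. This yields $\dim_M(E_1)\le \dim_M(E_1+E_2)$ directly from \eqref{def:upperminkowski}. Swapping the roles of $E_1$ and $E_2$ gives the analogous bound for $\dim_M(E_2)$, and taking the maximum completes this half.

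For the upper bound $\dim_M(E_1+E_2)\le \dim_M(E_1)+\dim_M(E_2)$, write $\beta_i=\dim_M(E_i)$ and fix $\eps>0$. By the characterization of upper Minkowski dimension, for all sufficiently small $\delta>0$ there exist covers of $E_1$ and $E_2$ by at most $C_\eps \delta^{-(\beta_1+\eps)}$ and $C_\eps \delta^{-(\beta_2+\eps)}$ closed intervals of length $\delta$, respectively. If $I_1=[a_1,a_1+\delta]$ and $I_2=[a_2,a_2+\delta]$ are two such intervals, then $I_1+I_2=[a_1+a_2,\,a_1+a_2+2\delta]$ is an interval of length $2\delta$, and as $(I_1,I_2)$ ranges over all pairs of covering intervals, the intervals $I_1+I_2$ cover $E_1+E_2$. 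Hence
\[
N(E_1+E_2,\,2\delta)\le N(E_1,\delta)\,N(E_2,\delta)\le C_\eps^{2}\,\delta^{-(\beta_1+\beta_2+2\eps)}.
\]
Since each interval of length $2\delta$ can in turn be covered by two intervals of length $\delta$, we also get $N(E_1+E_2,\delta)\lesssim \delta^{-(\beta_1+\beta_2+2\eps)}$ for all small $\delta$. Taking the $\limsup$ as $\delta\to 0$ and then letting $\eps\to 0$ yields $\dim_M(E_1+E_2)\le \beta_1+\beta_2$, completing the proof.

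The argument is entirely elementary and I do not anticipate any genuine obstacle; the only point requiring a little care is the bookkeeping to pass from a cover by intervals of length $2\delta$ to one by intervals of length $\delta$, which costs only a universal factor of $2$ and thus does not affect the dimension.
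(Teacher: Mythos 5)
Your proof is correct and follows essentially the same approach as the paper: the lower bound via translation invariance and monotonicity of covering numbers under inclusion, and the upper bound by observing that Minkowski sums of $\delta$-covers of $E_1,E_2$ produce a $2\delta$-cover of $E_1+E_2$ of controlled cardinality, then using doubling to return to a $\delta$-cover. The only differences are cosmetic (translating by a point of $E_2$ instead of $E_1$, and using endpoint-based rather than centered intervals).
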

\begin{proof}
The lower bound is trivial since for any fixed $t_i\in E_i$, we have that $t_1+E_2\subset E_1+E_2$ and $\dim_M(t_1+E_2)=\dim_M(E_2)$ by translation invariance, so $\dim_M(E_2)\le\dim_M(E_1+E_2)$ and then argue symmetrically.
The upper bound can be seen by taking $\delta$-covers of $E_1,E_2$ of the form $\{I_{\delta}(a^{(k)}_i)\}_{i=1}^{N(E_k,\delta)}$ for $k=1,2$ where $I_{\delta}(a)$ stands for a closed interval of length $\delta$ centered at $a$ and getting a cover of $E_1+E_2$ via intervals of the form $\{I_{2\delta}(a^{(1)}_{i_1}+a^{(2)}_{i_2})\}_{i_k=1}^{N(E_k,\delta)}$. Since Euclidean space is doubling, we can cover this by a collection of $\delta$-intervals with cardinality at most $CN(E_1,\delta)N(E_2,\delta)$.
\end{proof}
The equality case for the upper bound is achieved when taking (for instance) $E_1=\{1-n^{-1}:n\in\mathbb{N}\}$ and $E_2=\{1-n^{-2}:n\in\mathbb{N}\}$.

There is another notion of dimension that is useful in the theory of the linear spherical maximal operators (see \cite{AHRS, RS}). The upper Assouad dimension of $E$ is given by
\begin{equation}\label{def:upperassouad}
\dim_A(E)=\inf_{\delta_0>0} \limsup_{\delta\rightarrow 0}\sup_{I:|I|\in(\delta,\delta_0)}\frac{\log(N(E\cap I,\delta))}{\log(|I|/\delta)},
\end{equation}
where $I$ is a subinterval of $[1,2]$. Equivalently, we say the upper Assouad dimension is $\gamma$ if $\gamma$ is the smallest nonnegative number such that there exists $\delta_0>0$ and $C_\epsilon>0$ for all $\eps>0$ such that for any $\delta<\delta_0$ and interval $I$ whose length satisfies $\delta<|I|<\delta_0$, we have
\[
N(E\cap I,\delta)\le C_{\epsilon}(\delta/|I|)^{-\gamma-\epsilon}.
\]
In fact, more generally, one can study the Assouad spectrum $\dim_{A,\theta}(E)$ for $0\le \theta<1$ which is given by
    \begin{equation}\label{def:upperassouadspec}
\dim_{A,\theta}(E)=\limsup_{\delta\rightarrow 0}\sup_{I:|I|=\delta^\theta}\frac{\log(N(E\cap I,\delta))}{\log(|I|/\delta)}.
\end{equation}
We direct the reader to the paper \cite{AHRS, RS}, which discusses all of these concepts and their role in understanding linear spherical maximal functions with fractal dilation sets. We will make use of the notion of an \emph{Assouad regular set}, which is a set $E$ where $\dim_{A,\theta} E=\dim_{A} E$ for all $1>\theta>1-\frac{\dim_M E}{\dim_A E}$.

In \cite[Theorem 2]{AHRS} they show that if $E\subset [1,2]$ is Assouad regular with $\text{dim}_{M}(E)=\beta$ and $\text{dim}_{A}(E)=\gamma$, then for 
$${A}_E(f)(x):=\sup_{t\in E}\left|\int f(x-ty)d\sigma (y)\right|,$$ 
a necessary condition (and also sufficient up to some boundary pieces as shown in \cite{AHRS}, \cite{RS}) for the boundedness of $A_E:L^{p}\rightarrow L^{r}$ is that $(1/p,1/r)\in \mathcal{Q}(\beta,\gamma)$, the closed convex hull of the points $Q_1=(0,0),\,Q_2(\beta)=(\frac{d-1}{d-1+\beta},\frac{d-1}{d-1+\beta}),\,Q_3(\beta)=(\frac{d-\beta}{d-\beta+1},\frac{1}{d-\beta+1})$ and $Q_{4}(\gamma)=(\frac{d(d-1)}{d^2+2\gamma-1},\frac{d-1}{d^2+2\gamma-1})$. Such region has the following precise description:
\begin{equation}
     \mathcal{Q}(\beta,\gamma)=\left\{\left(\frac{1}{p},\frac{1}{r}\right)\in[0,1]^2\text{ such that }\frac{1}{r}\leq \frac{1}{p}\leq m_{linear}(d,r,\beta,\gamma)\right\}
\end{equation}
for 
$$m_{linear}(d,r,\beta,\gamma)=\text{min}\left\{\frac{d-1}{d}+\frac{1-\beta}{dr},\frac{\beta(d-1)}{\beta(d-1)+2\gamma}+\frac{(d-\beta)2\gamma-(d-1)\beta}{\beta(d-1)+2\gamma}\frac{1}{r},\frac{d}{r}\right\}.$$
In particular, for Assouad regular sets there is an interesting connection between the necessary conditions we get in Proposition \ref{allnecessaryAE} with the sharp (up to boundary) boundedness region of $A_E$ in the linear case.

\subsection{Sparse families and sparse bounds}
Let $\mathcal{S}$ denote a (possibly infinite) collection of cubes in Euclidean space. We say that it is $\eta$-sparse (for some $0<\eta\leq 1$) if for each $Q\in\mathcal{S}$ there exists a subset $E_Q$ such that $|E_Q|\geq\eta |Q|$ and the $E_Q$'s are pairwise disjoint. We say that a bilinear or bisublinear operator $\mathcal{B}$ has a $(p,q,r)$ trilinear sparse domination if for all functions $f,g,h\in C^{\infty}_0(\R^d)$, there exists an $\eta$-sparse family $\mathcal{S}$ such that
\begin{equation}
    |\langle \mathcal{B}(f,g),h\rangle| \lesssim \sum_{Q\in \mathcal{S}} |Q| \langle f\rangle_{Q,p}\langle g\rangle_{Q,q}\langle h\rangle_{Q,r}.
\end{equation}
On the left side of the equation, the brackets denote the standard $L^2$ inner product, and on the right side, we are using the notation
\[
\langle f\rangle_{Q,p}=\left(\frac{1}{|Q|}\int_Q |f(x)|^p\,dx \right)^{1/p}.
\]

\section{Sobolev estimates and continuity estimates at \texorpdfstring{$L^2\times L^{2}\rightarrow L^2$}{Lg}}\label{section222}

This section focuses on proving Sobolev smoothing estimates of the form $H^{-s_1}\times H^{-s_2}\rightarrow L^2$, $s_1,\,s_2\geq 0$ for single-scale maximal bilinear operators of the form $\mathcal{A}_{m,E}$, associated to an admissible multiplier $m$ and a fractal subset $E\subset [1,2]$, and similar bounds to maximal bilinear averaging operator $\mathcal{T}_E$ associated to triangle averages. First we deal with the special case $E=\{1\}$ because the argument is simpler and the result is slightly stronger. We then move to the study for general $E$ with (upper) Minkowski dimension $\beta$, by passing through the proof of decay bounds for pieces of $\mathcal{A}_{m,E}$. We then adapt the methods to the study of triangle averaging operators $\mathcal{T}_{E}$ by a more careful use of the decay of the associated multiplier.
\subsection{Single-scale bilinear operators associated to admissible multipliers: case \texorpdfstring{$E=\{1\}$}{Lg}} \label{Tmsubsection} We start by proving Sobolev type bounds of the form $H^{-s_1}\times H^{-s_2}\rightarrow L^2$, with $s_1,s_2\geq  0$ for the bilinear multiplier operator $T_m$ defined in (\ref{eqn: Tm}) and we show how that implies continuity estimates for it. This corresponds to the special case that the dilation set $E=\{1\}$. Note that in this special case, we do not need to assume that $\nabla{m}$ has any decay, all we need is enough decay for $m$.

From the definition (\ref{eqn: Tm}), for any $\varphi\in \mathcal{S}$, one has 
\begin{equation*}
    \begin{split}
        \int_{\R^d} T_m(f,g)(x)\hat{\varphi}(x)\,dx=&\int_{\R^{2d}} \hat{f}(\xi)\hat{g}(\eta)m(\xi,\eta)\varphi(\xi+\eta)\,d\xi d\eta\\
    =&\int_{\R^d}\int_{\R^d} \hat{f}(\xi-\eta)\hat{g}(\eta)m(\xi-\eta,\eta)\varphi(\xi)\,d\xi d\eta.
    \end{split}
\end{equation*}

Hence, the Fourier transform of $T_m(f,g)(x)$ is given by
\begin{equation}
\mathcal{F}(T_m(f,g))(\xi)=\int_{\R^d} \hat{f}(\xi-\eta)m(\xi-\eta,\eta)\hat{g}(\eta)\,d\eta.
\end{equation}

We first have the following Sobolev estimate.

\begin{prop}[Sobolev bounds for $T_m$]\label{sobolev} Assume $d\geq 1$ and that $m$ is an $a$-admissible multiplier up to order $0$, for some $a>d/2$. Then, for any $s_1,s_2\geq 0$ satisfying $s_1+s_2\leq \frac{2a-d}{2}$, one has  \begin{equation}
    \|T_m(f,g)\|_{L^2}\lesssim \|f\|_{H^{-s_1}}\|g\|_{H^{-s_2}}.
\end{equation}
In particular, it follows that $T_m:L^{2}\times L^{2}\rightarrow L^2$. 
 \end{prop}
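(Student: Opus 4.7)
The plan is to pass to the Fourier side via Plancherel and apply a single Cauchy--Schwarz that separates a piece absorbed by the weighted $L^2$ norms of $\hat f,\hat g$ (producing the $H^{-s_1}, H^{-s_2}$ factors) from a Schur-type kernel whose $L^\infty$ norm is controlled by the admissible decay of $m$. Using the formula $\mathcal{F}(T_m(f,g))(\xi) = \int \hat f(\xi-\eta)m(\xi-\eta,\eta)\hat g(\eta)\,d\eta$ recalled just above and Plancherel, one has
\[
\|T_m(f,g)\|_{L^2}^2 = \int_{\R^d}\Bigl|\int_{\R^d}\hat f(\xi-\eta)\hat g(\eta)m(\xi-\eta,\eta)\,d\eta\Bigr|^2\,d\xi.
\]

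I would split the inner integrand as $[|\hat f(\xi-\eta)|(1+|\xi-\eta|^2)^{-s_1/2}][|\hat g(\eta)|(1+|\eta|^2)^{-s_2/2}]\cdot [(1+|\xi-\eta|^2)^{s_1/2}(1+|\eta|^2)^{s_2/2}|m(\xi-\eta,\eta)|]$ and apply Cauchy--Schwarz in $\eta$. The result is
\[
\|T_m(f,g)\|_{L^2}^2 \leq \sup_\xi I(\xi)\cdot \iint\frac{|\hat f(\xi-\eta)|^2|\hat g(\eta)|^2}{(1+|\xi-\eta|^2)^{s_1}(1+|\eta|^2)^{s_2}}\,d\eta\, d\xi,
\]
where $I(\xi):=\int(1+|\xi-\eta|^2)^{s_1}(1+|\eta|^2)^{s_2}|m(\xi-\eta,\eta)|^2\,d\eta$; by Fubini the double integral factors as $\|f\|_{H^{-s_1}}^2\|g\|_{H^{-s_2}}^2$, so the task reduces to bounding $\sup_\xi I(\xi)$.

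To bound $\sup_\xi I(\xi)$, I would use $a$-admissibility in the form $|m(\xi-\eta,\eta)|^2 \lesssim (1+|\xi-\eta|+|\eta|)^{-2a}$ together with the elementary inequality $(1+A^2)^{s_1}(1+B^2)^{s_2}\lesssim (1+A+B)^{2(s_1+s_2)}$ for $A,B\ge 0$, $s_1,s_2\ge 0$. This reduces the task to bounding $\int(1+|\xi-\eta|+|\eta|)^{2(s_1+s_2)-2a}\,d\eta$ uniformly in $\xi$. Writing $\eta=\xi/2+v$, two applications of the triangle inequality give $|\xi/2-v|+|\xi/2+v|\ge \tfrac12(|\xi|+|v|)$, so the integral is dominated by $\int(1+|\xi|+|v|)^{-c}\,dv$ with $c:=2a-2(s_1+s_2)$. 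For $c>d$, i.e., strict $s_1+s_2<(2a-d)/2$, this is $\lesssim (1+|\xi|)^{d-c}\lesssim 1$, uniformly in $\xi$, completing the argument in the strict range.

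The main obstacle is the endpoint $s_1+s_2=(2a-d)/2$ allowed in the statement: the $v$-integral above diverges logarithmically there, so the crude Cauchy--Schwarz is too blunt. To recover the endpoint, I would replace the single Cauchy--Schwarz with a Littlewood--Paley decomposition $f=\sum_j P_j f$, $g=\sum_k P_k g$. Plancherel together with $|m|\lesssim 2^{-a\max(j,k)}$ on the relevant frequency support and the volume bound $\min(2^{jd},2^{kd})$ for the intersection of the two frequency annuli yields
\[
\|T_m(P_j f, P_k g)\|_{L^2}\lesssim 2^{-a\max(j,k)}\min(2^{jd/2},2^{kd/2})\|P_j f\|_2\|P_k g\|_2.
\]
Summing in $(j,k)$ with weights $A_j=2^{-s_1 j}\|P_j f\|_2$, $B_k=2^{-s_2 k}\|P_k g\|_2$ reduces matters to a bilinear Schur form whose row and column sums are of order $2^{(s_1+s_2+d/2-a)\max(j,k)}$; Schur's test applies exactly when this exponent is $\le 0$, i.e., precisely when $s_1+s_2\le(2a-d)/2$, thereby including the endpoint.
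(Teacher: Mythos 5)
Your proposal is correct, but it takes a genuinely different route from the paper's argument. The paper proves the claim directly at the endpoint $s_1+s_2=\frac{2a-d}{2}$ by a single Minkowski-then-Cauchy--Schwarz chain with an \emph{asymmetric} factorization of the decay: writing $a=s_1+s_2+\frac d2$, it uses
$(1+|\xi|+|\eta|)^{-a}\le (1+|\eta|)^{-s_2}(1+|\xi|+|\eta|)^{-s_1-d/2}$,
so the $(1+|\eta|)^{-s_2}$ factor attaches to $|\hat g|$ and gives $\|g\|_{H^{-s_2}}$ outright, while the remaining weight $(1+|\xi|+|\eta|)^{-2s_1-d}$ integrates in $\eta$ to $(1+|\xi|)^{-2s_1}$ provided $s_1>0$ — a harmless assumption after reducing to the equality case and invoking symmetry. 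Your first argument is instead the symmetric ``Schur kernel'' version of the same Plancherel computation; as you correctly observe, symmetry loses the endpoint by a logarithm, which is exactly what the paper's asymmetric split is designed to avoid. Your Littlewood--Paley fix for the endpoint is sound: the piecewise bound $\|T_m(P_jf,P_kg)\|_{L^2}\lesssim 2^{-a\max(j,k)}\min(2^{jd/2},2^{kd/2})\|P_jf\|_2\|P_kg\|_2$ follows from Plancherel, Minkowski, and Cauchy--Schwarz on the frequency annuli, and the subsequent Schur test on the kernel $K(j,k)=2^{-a\max(j,k)}\min(2^{jd/2},2^{kd/2})2^{s_1j+s_2k}$ has row and column sums $\sim 2^{(s_1+s_2+d/2-a)\max(j,k)}$, which are $O(1)$ precisely when $s_1+s_2\le\frac{2a-d}2$. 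In fact this piecewise estimate is the same one the paper develops later (as \eqref{boundpieces} in the proof of Theorem \ref{piecesofA_E}) for the dilation-set version; so your endpoint argument essentially anticipates that machinery, whereas the paper keeps the $E=\{1\}$ proof shorter by choosing the split so that only one of $s_1,s_2$ needs to be positive. Both approaches are valid and use only the order-$0$ admissibility, as required.
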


 \begin{proof}[Proof of Proposition \ref{sobolev}]
 From the inequality $\|f\|_{H^{-s}}\leq \|f\|_{H^{-s'}}$  if $s'\leq s$, once we know the proposition for $s_1+s_2=\frac{2a-d}{2}$ we actually get $T_m:\,H^{-s_1}\times H^{-s_2}\rightarrow L^2$ for all $s_1,s_2\geq 0$ with $s_1+s_2\leq \frac{2a-d}{2}$. So let us assume $s_1+s_2=\frac{2a-d}{2}$.
In this case, $s_1$ and $s_2$ can not both be zero, so we can also assume by symmetry that $s_1>0$.
Using Plancherel's Theorem, followed by Minkowski's and H\"older's inequality, we get
\begin{equation*}
    \begin{split}
\|T_m(f,g)\|_{L^2}=&\|\mathcal{F}(T_m(f,g))\|_{L^2}\\
        =&\left\|\int \hat{f}(\xi-\eta)\hat{g}(\eta)m(\xi-\eta,\eta)\,d\eta\right\|_{L^{2}_{\xi}}\\
        \leq&\int |\hat{g}(\eta)|\left\|\hat{f}(\xi-\eta)m(\xi-\eta,\eta)\right\|_{L^{2}_{\xi}}\,d\eta\\
        =&\int |\hat{g}(\eta)|\left(\int|\hat{f}(\xi)m(\xi,\eta)|^2\,d\xi\right)^{1/2}\,d\eta\\
        \leq & \int \frac{|\hat{g}(\eta)|}{(1+|\eta|)^{s_2}}\left(\int\frac{|\hat{f}(\xi)|^2}{(1+|\xi|+|\eta|)^{2s_1+d}}\,d\xi\right)^{1/2}\,d\eta\\
        \leq & \left(\int \frac{|\hat{g}(\eta)|^2}{(1+|\eta|)^{2s_2}}\,d\eta\right)^{1/2}\left( \int\int\frac{|\hat{f}(\xi)|^2}{(1+|\xi|+|\eta|)^{2s_1+d}}\,d\eta d\xi\right)^{1/2},
    \end{split}
\end{equation*}
where the second to last step in the above follows from the decay assumption on the symbol $m$. We observe that for any $\xi\in \R^d$,
    $$\int_{\R^d} \dfrac{1}{(1+|\xi|+|\eta|)^{2s_1+d}}\,d\eta\lesssim \dfrac{1}{(1+|\xi|)^{2s_1}}.$$
Indeed, using polar coordinates,
\begin{equation*}
    \begin{split}
      \int_{\R^d} \dfrac{1}{(1+|\xi|+|\eta|)^{2s_1+d}}\,d\eta=&c_d\int_{0}^{\infty} \dfrac{r^{d-1}}{(1+|\xi|+r)^{2s_1+d}}\,dr\\
      \lesssim & \dfrac{1}{(1+|\xi|)^{2s_1+d}}\int_{0}^{1+|\xi|} r^{d-1} \,dr+\int_{1+|\xi|}^{\infty}r^{-2s_1-1}\,dr\\
      \lesssim& (1+|\xi|)^{-2s_1}.
    \end{split}
\end{equation*}
 
Coming back to our estimate,
\begin{equation*}
    \begin{split}
        \|T_{m}(f,g)\|_{L^2}\lesssim \|g\|_{H^{-s_2}}\left(\int|\hat{f}(\xi)|^2 \dfrac{1}{(1+|\xi|)^{2s_1}}\,d\xi\right)^{1/2}\lesssim \|g\|_{H^{-s_2}} \|f\|_{H^{-s_1}}.
    \end{split}
\end{equation*}
\end{proof}

In the case that $T_m$ is a bilinear averaging operator over smooth compact surfaces, using the known decay estimate for $m=\hat{\mu}_S$, i.e. $\alpha=0$ in inequality (\ref{decaysurfacemulitplier}), we immediately derive the following results. 

\begin{cor}\label{corollaryforsurfaces}
    Let $d\geq 2$, and $m(\xi,\eta)=\hat{\sigma}_{2d-1}$. Then, for any $s_1, s_2\geq 0$ with $s_1+s_2\leq\frac{d-1}{2}$, one has
    \begin{equation}
\|\mathcal{A}_1(f,g)\|_{L^2}\lesssim \|f\|_{H^{-s_1}}\|g\|_{H^{-s_2}}.
    \end{equation}
    More generally, let $d\geq 2$, for a $(2d-1)$-dimensional compact smooth surface $S$ in $\R^{2d}$ without boundary such that $k$ of the $(2d-1)$ principal curvatures do not vanish, and $k>d$, one has that
\begin{equation}
\|\mathcal{A}_{\hat\mu_S,1}(f,g)\|_{L^2}\lesssim \|f\|_{H^{-s_1}}\|g\|_{H^{-s_2}},
\end{equation} for any $s_1,s_2\geq 0$ with $s_1+s_2\leq\frac{k-d}{2}$.
\end{cor}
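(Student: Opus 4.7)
The proof is essentially a direct application of Proposition \ref{sobolev} once the admissibility constant $a$ of the multiplier is identified in each case, so the plan is short.

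First, for the unit sphere case $m=\hat\sigma_{2d-1}$, the classical decay estimate of Littman gives $|\hat\sigma_{2d-1}(\xi,\eta)|\lesssim (1+|(\xi,\eta)|)^{-(2d-1)/2}$, which corresponds to taking $a=\tfrac{2d-1}{2}$ in the notion of $a$-admissibility up to order $0$. Since $d\geq 2$ ensures $a=\tfrac{2d-1}{2}>\tfrac{d}{2}$, Proposition \ref{sobolev} applies, and the resulting exponent threshold becomes
\begin{equation*}
s_1+s_2\leq \frac{2a-d}{2}=\frac{(2d-1)-d}{2}=\frac{d-1}{2},
\end{equation*}
which is exactly the range claimed.

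For the general hypersurface case, the decay bound (\ref{decaysurfacemulitplier}) with $|\alpha|=0$ states that $|\hat\mu_S(\xi,\eta)|\lesssim (1+|(\xi,\eta)|)^{-k/2}$, so $m=\hat\mu_S$ is $a$-admissible up to order $0$ with $a=\tfrac{k}{2}$. The hypothesis $k>d$ gives $a>\tfrac{d}{2}$, so Proposition \ref{sobolev} again applies and produces the constraint
\begin{equation*}
s_1+s_2\leq \frac{2a-d}{2}=\frac{k-d}{2},
\end{equation*}
which matches the statement.

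Since Proposition \ref{sobolev} is already proved, there is no real obstacle here; the corollary is a bookkeeping exercise of matching the admissibility parameter $a$ in Proposition \ref{sobolev} with the explicit decay rates for $\hat\sigma_{2d-1}$ and $\hat\mu_S$ coming from curvature. The only minor point to be careful about is that the proposition requires $a$-admissibility only up to order $0$, so we do not need the full strength of (\ref{decaysurfacemulitplier}) at $|\alpha|\geq 1$; the pointwise bound on $\hat\mu_S$ itself is enough, and the endpoint $s_1+s_2=\tfrac{2a-d}{2}$ is indeed included as it already is in Proposition \ref{sobolev}.
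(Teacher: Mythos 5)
Your proposal is correct and follows exactly the route the paper intends: the corollary is stated as an immediate consequence of Proposition~\ref{sobolev} together with the decay estimate (\ref{decaysurfacemulitplier}) at $\alpha=0$, and your identification of $a=\tfrac{2d-1}{2}$ (sphere) and $a=\tfrac{k}{2}$ (general $S$) with the resulting thresholds $\tfrac{d-1}{2}$ and $\tfrac{k-d}{2}$ is precisely the bookkeeping the paper leaves implicit.
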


\begin{rem}
    For the particular case $E=\{1\}$ and $m=\hat{\sigma}_{2d-1}$, the corollary above was obtained independently in \cite{GPP2023}, even though they stated it slightly differently in terms of what it implies for the pieces of the operator $\mathcal{A}_1$. Their result can recover $\|\mathcal{A}_{1}(f,g)\|_{L^2}\lesssim \|f\|_{H^{-s_1}}\|g\|_{H^{-s_2}}$ for $s_1,\,s_2\geq 0$ and $s_1+s_2<\frac{d-1}{2}$.  
\end{rem}

Back to the general bilinear operator $T_m$ and recall the notation $\tau_h(f)(x)=f(x-h)$ for any $h\in \R^d$. Proposition \ref{sobolev} can be used to deduce the following continuity estimate for $T_m$, which plays a key role in deriving the sparse bound for its corresponding multi-scale maximal operator.

\begin{cor}[Continuity estimates for $T_m$]\label{continuity222} Assume $d\geq 1$ and that $m$ is an $a$-admissible multiplier up to order $0$ for some $a>d/2$. Then there exists $\gamma>0$ such that
\begin{equation}\label{continuityinoneinput}
    \|T_m(f-\tau_h f, g)\|_{L^2}+\|T_m(f, g-\tau_h g)\|_{L^2}\lesssim |h|^{\gamma} \|f\|_{L^2} \|g\|_{L^2},\quad \forall |h|< 1.
\end{equation}
Moreover, under the same hypothesis there exist $\gamma_1,\gamma_2>0$ such that
\begin{equation}\label{continuityinboth}
    \|T_m(f-\tau_{h_1} f, g-\tau_{h_2}g)\|_{L^2}\lesssim |h_1|^{\gamma_1}|h_2|^{\gamma_2}\|f\|_{L^2} \|g\|_{L^2},\quad \forall |h_1|< 1, |h_2|<1.
\end{equation}
\end{cor}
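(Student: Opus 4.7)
The plan is to convert the translation difference into a Fourier-side multiplier that is small at low frequencies, and then feed the result into Proposition \ref{sobolev}. The starting observation is $\widehat{f-\tau_h f}(\xi)=(1-e^{-2\pi i h\cdot \xi})\hat{f}(\xi)$, and the factor satisfies $|1-e^{-2\pi i h\cdot \xi}|\le \min(2,\,2\pi|h||\xi|)$, which interpolates to $|1-e^{-2\pi i h\cdot \xi}|\lesssim (|h||\xi|)^{\theta}$ for any $\theta\in[0,1]$. Since $|\xi|^{\theta}(1+|\xi|^2)^{-s/2}\lesssim 1$ whenever $\theta\le s$, this gives the quantitative Sobolev gain
$$\|f-\tau_h f\|_{H^{-s}}\lesssim |h|^{\theta}\|f\|_{L^2}$$
for all $|h|<1$ and any $\theta\in[0,\min(s,1)]$; the analogous statement for $g$ holds by symmetry.

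With this one-line lemma in hand, both displays in the corollary follow by direct application of Proposition \ref{sobolev}. For the first estimate (\ref{continuityinoneinput}), I would pick $s_1:=\min\{\tfrac{2a-d}{2},1\}$ (strictly positive since $a>d/2$) and $s_2:=0$; then $s_1+s_2\le \tfrac{2a-d}{2}$, so Proposition \ref{sobolev} bounds $\|T_m(f-\tau_h f, g)\|_{L^2}$ by $\|f-\tau_h f\|_{H^{-s_1}}\|g\|_{L^2}\lesssim |h|^{s_1}\|f\|_{L^2}\|g\|_{L^2}$. An identical argument handles the symmetric term, and one can take $\gamma=s_1$. For (\ref{continuityinboth}) I would split the smoothing budget evenly, choosing $s_1=s_2:=\min\{\tfrac{2a-d}{4},1\}$, both positive, apply Proposition \ref{sobolev} to obtain
$$\|T_m(f-\tau_{h_1}f,\,g-\tau_{h_2}g)\|_{L^2}\lesssim \|f-\tau_{h_1}f\|_{H^{-s_1}}\|g-\tau_{h_2}g\|_{H^{-s_2}},$$
and invoke the lemma on each factor to produce $|h_1|^{s_1}|h_2|^{s_2}\|f\|_{L^2}\|g\|_{L^2}$. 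The claim then holds with $\gamma_1=s_1$ and $\gamma_2=s_2$.

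I do not expect any serious obstacle here: the substantive content was the fractional Sobolev bound in Proposition \ref{sobolev}, and this corollary is the standard dividend one collects once such a bound is available. The only mild care needed is to guarantee that $(s_1,s_2)$ consists of strictly positive numbers summing to at most $\tfrac{2a-d}{2}$, which is clearly possible thanks to the admissibility hypothesis $a>d/2$; the truncation by $1$ in the definitions above is only there to ensure we stay within the regime $\theta\le 1$ where the pointwise interpolation of $|1-e^{-2\pi i h\cdot\xi}|$ is valid.
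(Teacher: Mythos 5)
Your proof is correct and takes essentially the same route as the paper: in both cases the continuity bound comes from applying Proposition \ref{sobolev} to pass to a negative-order Sobolev norm of $f-\tau_h f$, and then estimating the multiplier $(1+|\xi|)^{-s}(1-e^{-2\pi i h\cdot\xi})$ pointwise, yielding the exponent $\gamma=\min\{\tfrac{2a-d}{2},1\}$. The only cosmetic difference is that you package the low/high-frequency dichotomy into a clean reusable lemma on $\|f-\tau_h f\|_{H^{-s}}$ and truncate $s$ at $1$ up front, whereas the paper chooses the maximal $s_1$ and performs the $|\xi|\lessgtr 1/|h|$ case split inline; the two are equivalent and give the same exponent.
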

\begin{proof}[Proof of Corollary \ref{continuity222}]
From Proposition \ref{sobolev}, with $s_2=0$ and $s_1=\frac{2a-d}{2}$, we have
\begin{equation*}
    \|T_m(f-\tau_h f,g)\|_{L^2}\lesssim \|f-\tau_h f\|_{H^{-s_1}}\|g\|_{L^2}= \left(\int |\hat{f}(\xi)|^2\dfrac{|1-e^{-2\pi i h\cdot \xi}|^2}{(1+|\xi|)^{2a-d}}\,d\xi\right)^{1/2}\|g\|_{L^2}. 
\end{equation*}
Let $\gamma=\frac{1}{2}\text{min}(2a-d,2)>0$. Then, 
\begin{equation*}
\begin{split}
    \dfrac{|1-e^{-2\pi i h\cdot \xi}|^2}{(1+|\xi|)^{2a-d}}\lesssim&\begin{cases}
        \dfrac{(|h||\xi|)^2}{(1+|\xi|)^{2a-d}}\leq \dfrac{(|h||\xi|)^{2\gamma}}{(1+|\xi|)^{2a-d}}\lesssim |h|^{2\gamma} &\text{ if }|\xi|\leq \dfrac{1}{|h|}\\
        \dfrac{1}{|\xi|^{2a-d}}\lesssim |h|^{2a-d} &\text{ if }|\xi|\geq \dfrac{1}{|h|}
    \end{cases} \\
    \lesssim& |h|^{2\gamma},\,\text{ for all }|h|\leq 1.
    \end{split}
\end{equation*}
Similarly, one can get continuity estimates with the translation in the second input function. The proof of (\ref{continuityinboth}) follows for a simple adaptation of this argument by applying Proposition \ref{sobolev} with $s_1=s_2=\frac{2a-d}{4}$ for example. Alternatively, by applying the continuity estimate in each separate input, we know there exists $\gamma_1,\,\gamma_2>0$ such that 
$$\|T_m(f-\tau_{h_1}, g-\tau_{h_2}g)\|_{L^2} \lesssim |h_1|^{\gamma_1}\|f\|_1\|g-\tau_{h_2}g\|_{L^2}\lesssim |h_{1}|^{\gamma_{1}}\|f\|_{L^2}\|g\|_{L^2}$$
and 
$$\|T_m(f-\tau_{h_1}, g-\tau_{h_2}g)\|_{L^2} \lesssim|h_{2}|^{\gamma_{2}}\|f\|_{L^2}\|g\|_{L^2}.$$
Multiplying these two inequalities we get 
$$\|T_m(f-\tau_{h_1}, g-\tau_{h_2}g)\|_{L^2}^2 \lesssim |h_1|^{\gamma_1}|h_{2}|^{\gamma_{2}}\|f\|_{L^2}^2\|g\|_{L^2}^2,$$
leading to the desired bound.

\end{proof}

\begin{rem}\label{multilinear222}
    The $L^2\times L^2\rightarrow L^2$ approach generalizes easily to the multilinear setting. Let $\ell\geq 2$ and assume that  
    $$|m(\xi)|\lesssim (1+|\xi|)^{-a}$$
    where $\xi=(\xi_1,\xi_2\cdots,\xi_\ell)\in \R^{\ell d}$. If $2a>(\ell-1)d$, then
$$T_{m}(f_1,\dots, f_\ell)(x):=\int_{S^{\ell d-1}}\hat{f_1}(\xi_1)\hat{f_2}(\xi_2)\cdots \hat{f_\ell}(\xi_\ell)m(\xi)e^{2\pi i x\cdot (\xi_1+\xi_2+\dots \xi_\ell)}\,d\xi$$
satisfies
$$\|T_m(f_1,f_2,\dots, f_\ell)\|_{L^2}\lesssim \|f_1\|_{H^{-\frac{2a-(\ell-1)d}{2}}}\|f_2\|_{L^2}\dots \|f_\ell\|_{L^2}.$$
More generally, $T_m:\, H^{-s_1}\times \cdots \times H^{-s_\ell}\rightarrow L^2$ for all $s_i\geq 0$ with $\sum_{i=1}^{\ell}s_i\leq \frac{2a-(\ell-1)d}{2}$.
\end{rem}

\subsection{Single-scale bilinear maximal operators associated to admissible multipliers and general dilation sets \texorpdfstring{$E\subset[1,2]$}{Lg}}\label{Eadmissiblemsubsection}

In this section, given $E\subset [1,2]$, with a certain (upper) Minkowski dimension $\text{dim}_{M}(E)$, we are interested in proving Sobolev smoothing bounds and continuity estimates for the more general single-scale bilinear maximal operators 
\begin{equation}\label{defAmE}
\mathcal{A}_{m,E}(f,g)(x)=\sup_{t\in E} |T_{m_t}(f,g)(x)|
\end{equation}
where $m_t(\xi,\eta)=m(t\xi,t\eta)$. When $m=\hat{\sigma}_{2d-1}$ we might simply write $\mathcal{A}_E=\mathcal{A}_{m,E}$.

Similarly to what is done in \cite{HHY,borgesfoster}, we start with choosing a radial function $\varphi\in \mathcal{S}(\R^d)$ such that 
\begin{equation}\label{phi def}
\hat{\varphi}(\xi)= \begin{cases}
   1, & \text{ if }|\xi|\leq 1,\\
   0,& \text{ if }|\xi|\geq 2.
    \end{cases}
\end{equation}

Let $\hat{\psi}(\xi)=\hat{\varphi}(\xi)-\hat{\varphi}(2\xi)$, which is supported in $\{1/2<|\xi|<2\}$. Then
\begin{equation}\label{psi def}
\hat{\varphi}(\xi)+\sum_{j=1}^{\infty} \hat{\psi}(2^{-j}\xi)\equiv 1.
\end{equation}

For all $i,j\geq 1$ and $t\in [1,2]$, define
\begin{equation}
\begin{split}
 T_{m_t}^{i,j}(f,g)(x):=&\int_{\R^{2d}}\hat{f}(\xi)\hat{g}(\eta)\hat{m}(t\xi, t\eta)\hat{\psi}(2^{-i}\xi)\hat{\psi}(2^{-j}\eta)e^{2\pi i x\cdot (\xi+ \eta)}\,d\xi d\eta \\
 =&T_{m_t}(f^i,g^j)(x)
\end{split}
\end{equation}
where $\hat{f^i}(\xi):=\hat{f}(\xi)\hat{\psi}(2^{-i}\xi)$.
If $i=0$, replace $\hat{\psi}(2^{-i}\xi)$ by $\hat{\varphi}(\xi)$ in the expression above, and similarly if $j=0$. Then one has 
$$T_{m_t}(f,g)(x)=\sum_{i,j\geq 0} T_{m_t}^{i,j}(f,g)(x).$$

Let us consider the pieces of the operator $\mathcal{A}_{m,E}$ given by
\begin{equation}
    \mathcal{A}_{m,E}^{i,j}(f,g)(x)=\sup_{t\in E}|T_{m_t}^{i,j}(f,g)(x)|=\mathcal{A}_{m,E}(f^i,g^j).
\end{equation}Obviously,
\[
\mathcal{A}_{m,E}(f,g)(x)\leq \sum_{i,j\geq 0}\mathcal{A}_{m,E}^{i,j}(f,g)(x).
\]

Recall from Section \ref{sec:notation} that $N(E,\delta)$ denotes the minimal number of closed intervals of size $\delta$ that one needs to cover the set $E$, and that if  $\text{dim}_{M}(E)=\beta$, then
$$N(E,\delta)\leq C_{\eps} \delta^{-(\beta+\eps)},\,\text{ for all }\delta<1\text{ and }\eps>0.$$

\begin{thm}[Decay estimates for the pieces of $\mathcal{A}_{m,E}$]\label{piecesofA_E}
    Let $d\geq 1$, and let $E\subset [1,2]$ with $\beta=\text{dim}_{M}E$. Let $m$ be an $a$-admissible bilinear multiplier up to order $1$. If $2a>d+\beta$, then for any $i,\,j\geq 0$ one has
    \begin{equation}
      \|\mathcal{A}_{m,E}^{i,j}(f,g)\|_{L^2}\lesssim N(E,2^{-\max\{i,j\}})^{\frac{1}{2}}2^{-\max\{i,j\}a}2^{\min\{i,j\}\frac{d}{2}}\|f\|_{L^2}\|g\|_{L^2}.
    \end{equation}
    In particular, 
    \begin{equation}\label{decaymaximumij}
        \begin{split}
           \|\mathcal{A}_{m,E}^{i,j}(f,g)\|_{L^2} \lesssim_{\eps}& 2^{-\max\{i,j\}(\frac{2a-d-\beta-\eps}{2})}\|f\|_{L^2}\|g\|_{L^2}.
        \end{split}
    \end{equation}
\end{thm}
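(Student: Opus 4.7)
The plan is the standard maximal-operator approach: prove a single-scale $L^2$ bound for $T_{m_t}^{i,j}$ by Plancherel, prove the analogous bound for $\partial_t T_{m_t}^{i,j}$ using admissibility up to order $1$, combine them via a Sobolev embedding in $t$ to control the sup over a small interval, and finally sum over a Minkowski-covering of $E$.

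\textbf{Single-scale estimate.} On the support of $\hat{f^i}(\xi-\eta)\hat{g^j}(\eta)$ one has $|\xi-\eta|\sim 2^i$ and $|\eta|\sim 2^j$, so $|\xi-\eta|+|\eta|\sim 2^{\max(i,j)}$ and admissibility gives $|m(t(\xi-\eta),t\eta)|\lesssim 2^{-a\max(i,j)}$ uniformly in $t\in[1,2]$. Applying Plancherel to
$$\mathcal{F}(T_{m_t}^{i,j}(f,g))(\xi)=\int \hat{f^i}(\xi-\eta)\hat{g^j}(\eta)\,m(t(\xi-\eta),t\eta)\,d\eta,$$
then Cauchy--Schwarz in $\eta$ on the support of volume $\sim 2^{d\min(i,j)}$, followed by Fubini, yields
$$\|T_{m_t}^{i,j}(f,g)\|_{L^2}\lesssim 2^{\frac{d}{2}\min(i,j)}\,2^{-a\max(i,j)}\,\|f\|_{L^2}\|g\|_{L^2}$$
uniformly in $t\in[1,2]$.

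\textbf{Derivative estimate and Sobolev embedding.} Order-$1$ admissibility gives $|\partial_t m(t\xi,t\eta)|=|\xi\cdot\nabla_1 m+\eta\cdot\nabla_2 m|(t\xi,t\eta)\lesssim (|\xi|+|\eta|)(1+|\xi|+|\eta|)^{-a}$, which on the support costs an extra factor $2^{\max(i,j)}$ compared to the previous step:
$$\|\partial_t T_{m_t}^{i,j}(f,g)\|_{L^2}\lesssim 2^{\frac{d}{2}\min(i,j)}\,2^{(1-a)\max(i,j)}\,\|f\|_{L^2}\|g\|_{L^2}.$$
Setting $F(t,x):=T_{m_t}^{i,j}(f,g)(x)$, I then apply the pointwise Sobolev-type inequality
$$\sup_{t\in I}|F(t,x)|^2\lesssim \delta^{-1}\!\int_I|F(t,x)|^2\,dt+\Bigl(\int_I|F|^2\,dt\Bigr)^{1/2}\Bigl(\int_I|\partial_t F|^2\,dt\Bigr)^{1/2}$$
on any interval $I$ with $|I|=\delta$, integrate in $x$, and use Cauchy--Schwarz in $x$ on the cross-term. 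Choosing $\delta=2^{-\max(i,j)}$ makes the two cost factors $\delta^{-1}$ and $2^{2\max(i,j)}$ balance exactly against the squared bounds from the two previous displays, producing
$$\bigl\|\sup_{t\in I}|T_{m_t}^{i,j}(f,g)|\bigr\|_{L^2}\lesssim 2^{\frac{d}{2}\min(i,j)}\,2^{-a\max(i,j)}\,\|f\|_{L^2}\|g\|_{L^2}.$$

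\textbf{Assembly.} Cover $E$ by $N(E,\delta)$ intervals $I_k$ of length $\delta=2^{-\max(i,j)}$. Since $\sup_{t\in E}|F(t,x)|^2\leq\sum_k\sup_{t\in I_k\cap E}|F(t,x)|^2$, integrating in $x$ and applying the previous display to each $I_k$ gives
$$\|\mathcal{A}_{m,E}^{i,j}(f,g)\|_{L^2}^2\leq\sum_k\bigl\|\sup_{t\in I_k}|T_{m_t}^{i,j}(f,g)|\bigr\|_{L^2}^2\lesssim N(E,2^{-\max(i,j)})\,2^{d\min(i,j)}\,2^{-2a\max(i,j)}\|f\|_{L^2}^2\|g\|_{L^2}^2,$$
which is the first claimed bound. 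The \emph{in particular} statement (\ref{decaymaximumij}) follows from $N(E,2^{-\max(i,j)})\lesssim_\eps 2^{\max(i,j)(\beta+\eps)}$ together with the trivial bound $\min(i,j)\leq\max(i,j)$, which turns $2^{(d/2)\min(i,j)}$ into $2^{(d/2)\max(i,j)}$.

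The only delicate point is the choice $\delta=2^{-\max(i,j)}$: it is the unique scale at which the Sobolev-embedding terms balance, and it is precisely this balance that lets the Minkowski dimension $\beta$ of $E$ enter the final bound through the factor $N(E,2^{-\max(i,j)})^{1/2}$. The order-$1$ admissibility of $m$ is used exactly to produce the $\partial_t F$ estimate needed for this step; relaxing to order $0$ (as in the $E=\{1\}$ case treated via Proposition~\ref{sobolev}) removes the need for any Sobolev embedding in $t$ and is why that case tolerates weaker hypotheses.
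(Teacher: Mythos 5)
Your proof is correct and follows essentially the same strategy as the paper: a single-scale $L^2$ bound from Plancherel and the admissible decay of $m$, a matching bound on the $t$-derivative from the order-$1$ admissibility, a Sobolev embedding in $t$ to pass to the sup over a $\delta$-interval with $\delta = 2^{-\max(i,j)}$, and finally summing over a $\delta$-cover of $E$ to pick up $N(E,\delta)^{1/2}$. The only cosmetic differences are that the paper frames the Sobolev step by inserting a smooth cutoff $\alpha_I$ rather than averaging the fundamental theorem of calculus over the interval, and proves the single-scale bound by Minkowski followed by Cauchy–Schwarz rather than Cauchy–Schwarz on the joint frequency support in $\eta$; both are routine variants of the same computation and your treatment of the $\eta$-support giving volume $\sim 2^{d\min(i,j)}$ (so the argument is manifestly symmetric in $i,j$) is, if anything, slightly cleaner than the paper's $i\geq j$ reduction.
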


Before proving Theorem \ref{piecesofA_E}, let us discuss how it implies Sobolev smoothing estimates and continuity estimates for $\mathcal{A}_{m,E}$ (stated in Theorem \ref{sobolevE} and Corollary \ref{continuityA_E} respectively).

\begin{proof}[Proof of Theorem  \ref{sobolevE} assuming Theorem \ref{piecesofA_E}]
From Theorem \ref{piecesofA_E} we know that for all $i,j\geq 0$,
$$ \|\mathcal{A}_{m,E}^{i,j}(f,g)\|_{L^2}\lesssim_{\eps} 2^{-\max\{i,j\}(\frac{2a-d-\beta-\eps}{2})}\|f^i\|_{L^2}\|g^j\|_{L^2},$$
where $\hat{f^i}(\xi)=\hat{f}(\xi)\hat{\psi}(2^{-i}\xi)$, $i\geq 1$, and $\hat{f}^0(\xi)=\hat{f}(\xi)\hat{\varphi}(\xi)$.
Then,
\begin{equation}
    \begin{split}
        \|\mathcal{A}_{m,E}(f,g)\|_{L^2}\lesssim & \sum_{i,j\geq 0} \|\mathcal{A}_{m,E}(f^i,g^j)\|_{L^2}\\
        \lesssim_{\eps} &\sum_{i\geq 0}\|f^i\|_{L^2}\sum_{j\geq i} 2^{-j(\frac{2a-d-\beta-\eps}{2})}\|g^j\|_{L^2}\\
        &+\sum_{i\geq 0}\sum_{j< i} \|f^i\|_{L^2}2^{-i(\frac{2a-d-\beta-\eps}{2})}\|g^j\|_{L^2}=I+II.
    \end{split}
\end{equation}
Observe that by H\"older's inequality (first in the sum in $j$ and later in the sum in $i$),
\begin{equation*}
    \begin{split}
        I\lesssim &\sum_{i\geq 0}\|f^i\|_{L^2}\sum_{j\geq i}2^{-j(\frac{2a-d-2s_2-\beta-\eps}{2})}2^{-js_2}\|g^j\|_{L^2}\\
        \lesssim& \sum_{i\geq 0}\|f^i\|_{L^2}\|g\|_{H^{-s_2}}\left(\sum_{j\geq i} 2^{-j(2a-d-2s_2-\beta-\eps)}\right)^{1/2}\\
        \lesssim&\|g\|_{H^{-s_2}}\sum_{i\geq 0}2^{-is_1}\|f^i\|_{L^2} 2^{-i(\frac{2a-d-2s_1-2s_2-\beta-\eps}{2})}\\
        \lesssim_{\eps}& \|f\|_{H^{-s_1}}\|g\|_{H^{-s_2}}
    \end{split}
\end{equation*}
because $2a>d+2s_1+2s_2+\beta$.
Similarly, by reversing the roles of $i$ and $j$,
\begin{equation*}
    II\lesssim \sum_{j\geq 0}\|g^j\|_{L^2}\sum_{i>j}2^{-i(\frac{2a-d-2s_1-\beta-\eps}{2})}2^{-is_1}\|f^i\|_{L^2}
    \lesssim \|f\|_{H^{-s_1}}\|g\|_{H^{-s_2}}
\end{equation*}
because $2a>d+2s_1+2s_2+\beta$.
\end{proof}

\begin{cor}[Continuity estimates for $\mathcal{A}_{m,E}$]\label{continuityA_E}
Let $d\geq 1$ and $E\subset [1,2]$ with upper Minkowski dimension $\text{dim}_{M}(E)=\beta$. Let $m$ be an $a$-admissible bilinear multiplier up to order $1$, and assume that $2a>d+\beta$. Then, there exists $\gamma>0$ such that
\begin{equation}\label{continuityoneinputforE}
    \|\mathcal{A}_{m,E}(f-\tau_h f,g)\|_{L^2}+\|\mathcal{A}_{m,E}(f,g-\tau_h g)\|_{L^2}\lesssim |h|^{\gamma}\|f\|_{L^2}\|g\|_{L^2},\quad\forall |h|<1.
\end{equation}
Moreover, under the same hypothesis there exist $\gamma_1,\gamma_2>0$ such that
\begin{equation}\label{continuityinbothforE}
    \|\mathcal{A}_{m,E}(f-\tau_{h_1} f, g-\tau_{h_2}g)\|_{L^2}\lesssim |h_1|^{\gamma_1}|h_2|^{\gamma_2}\|f\|_{L^2} \|g\|_{L^2},\quad \forall |h_1|< 1, |h_2|<1.
\end{equation}
\end{cor}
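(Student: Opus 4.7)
The plan is to derive these continuity estimates as immediate consequences of Theorem \ref{sobolevE}, following the same template used to deduce Corollary \ref{continuity222} from Proposition \ref{sobolev}, but now with the general dilation set $E$. Since $2a > d + \beta$ by hypothesis, the quantity $\frac{2a-d-\beta}{2}$ is strictly positive, so we may fix any $s_1$ with $0 < s_1 < \frac{2a-d-\beta}{2}$ and apply Theorem \ref{sobolevE} with parameters $(s_1, 0)$ to get
$$\|\mathcal{A}_{m,E}(f-\tau_h f, g)\|_{L^2} \lesssim \|f-\tau_h f\|_{H^{-s_1}}\|g\|_{L^2}.$$

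To bound $\|f - \tau_h f\|_{H^{-s_1}}$, observe that $\widehat{f-\tau_h f}(\xi) = (1-e^{-2\pi i h\cdot\xi})\hat{f}(\xi)$, and set $\gamma := \min(s_1, 1) > 0$. The claim is that for all $|h| \leq 1$ and all $\xi \in \R^d$,
$$\frac{|1-e^{-2\pi i h\cdot\xi}|^2}{(1+|\xi|^2)^{s_1}} \lesssim |h|^{2\gamma}.$$
This follows by splitting into two regimes: when $|\xi| \leq 1/|h|$, the elementary bound $|1-e^{-2\pi i h\cdot\xi}| \lesssim |h||\xi|$ combined with $|h||\xi| \leq 1$ and $\gamma \leq \min(s_1, 1)$ gives the desired estimate; when $|\xi| > 1/|h|$, the trivial bound $|1-e^{-2\pi i h\cdot\xi}| \leq 2$ together with $|\xi|^{-2s_1} \leq |h|^{2s_1} \leq |h|^{2\gamma}$ suffices. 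Inserting this into the integral defining the Sobolev norm yields $\|f - \tau_h f\|_{H^{-s_1}} \lesssim |h|^{\gamma}\|f\|_{L^2}$. The analogous bound for the term $\mathcal{A}_{m,E}(f, g - \tau_h g)$ follows by symmetry, applying Theorem \ref{sobolevE} with parameters $(0, s_2)$ for a valid choice $0 < s_2 < \frac{2a-d-\beta}{2}$. This establishes (\ref{continuityoneinputforE}).

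For (\ref{continuityinbothforE}), we reuse (\ref{continuityoneinputforE}) twice, exactly as in the second half of the proof of Corollary \ref{continuity222}. Setting $\tilde{g} := g - \tau_{h_2} g$ and applying (\ref{continuityoneinputforE}) with $\tilde{g}$ in place of $g$ gives
$$\|\mathcal{A}_{m,E}(f - \tau_{h_1} f, g - \tau_{h_2} g)\|_{L^2} \lesssim |h_1|^\gamma \|f\|_{L^2} \|\tilde{g}\|_{L^2} \lesssim |h_1|^\gamma \|f\|_{L^2} \|g\|_{L^2},$$
and symmetrically the same expression is bounded by $|h_2|^\gamma \|f\|_{L^2} \|g\|_{L^2}$. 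Multiplying these two bounds and taking a square root yields (\ref{continuityinbothforE}) with $\gamma_1 = \gamma_2 = \gamma/2$. (Alternatively, one can apply Theorem \ref{sobolevE} directly with both $s_1, s_2 > 0$ chosen so that $s_1 + s_2 < \frac{2a-d-\beta}{2}$, and repeat the one-variable computation in each input; this can produce slightly different admissible exponents but the same qualitative conclusion.)

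There is essentially no hard obstacle here: all the real work has been absorbed into Theorem \ref{sobolevE}, which in turn rests on the piecewise decay estimates of Theorem \ref{piecesofA_E}. Once one has the Sobolev smoothing at the exponent $(2,2,2)$ with any positive amount of smoothing in one input, the derivation of H\"older continuity is a standard Fourier-side computation using the bound $|1 - e^{-2\pi i h\cdot\xi}| \lesssim \min(|h||\xi|, 1)$.
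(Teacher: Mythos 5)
Your proof is correct and follows essentially the same route as the paper: apply Theorem \ref{sobolevE} with $(s_1,0)$ (respectively $(0,s_2)$), then use the pointwise bound $|1-e^{-2\pi i h\cdot\xi}|\lesssim\min(|h||\xi|,1)$ to convert the $H^{-s_1}$ norm of $f-\tau_h f$ into a factor of $|h|^{\min(s_1,1)}$. For the two-input estimate, the paper applies Theorem \ref{sobolevE} directly with $s_1,s_2>0$ and $s_1+s_2<\frac{2a-d-\beta}{2}$, while your primary argument uses the geometric-mean trick of multiplying the two one-sided bounds (which is the same device the paper uses in the proof of Corollary \ref{continuity222}), and you also note the direct alternative—both are valid and equivalent in substance.
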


\begin{proof}[Proof of Corollary \ref{continuityA_E} assuming Theorem \ref{sobolevE}]
Since we are assuming $2a>d+\beta$, one can take $s_1\in(0, \frac{2a-d-\beta}{2})$ and $s_2=0$ in Theorem \ref{sobolevE} to say that 
\begin{equation*}
    \begin{split}
        \|\mathcal{A}_{m,E}(f-\tau_hf,g)\|_{L^2}\lesssim& \|f-\tau_h f\|_{H^{-s_1}} \|g\|_{L^2}\\
        = & \left\|\frac{(1-e^{2\pi ih\cdot \xi})}{(1+|\xi|)^{s_1}}\hat{f}(\xi)\right\|_{L^2_{\xi}} \|g\|_{L^2} \lesssim |h|^{\gamma} \|f\|_{L^2}\|g\|_{L^2},
    \end{split}
\end{equation*}
for $\gamma=\min\{s_1, 1\}>0$. To bound $\|\mathcal{A}_{m,E}(f,g-\tau_h g)\|_{2}$ one can apply Theorem \ref{sobolevE} with $s_1=0$ and $s_2\in (0, \frac{2a-d-\beta}{2})$ and for the bound for $\|\mathcal{A}_{m,E}(f-\tau_{h_1}f,g-\tau_{h_2}g )\|_2$ it is enough to take $s_1, s_2>0$ with $s_1+s_2<\frac{2a-d-\beta}{2}$.
\end{proof}
\begin{rem}\label{rem: continuityforsigmahat}
    The continuity estimates (\ref{continuityoneinputforE}) and (\ref{continuityinbothforE}) for the particular case of $m=\hat{\sigma}_{2d-1}$ were already known for any $d\geq 2$ and $\beta\in [0,1]$ as a consequence of the continuity estimates for $\tilde{\mathcal{M}}=\mathcal{M}_{\hat\sigma_{2d-1},[1,2]}$ in $d\geq 2$ at the exponent $L^{2}\times L^{2}\rightarrow L^1$ proved in \cite{BFOPZ}. As already observed in there, interpolation leads to continuity estimates at any point in the interior of the boundedness region of $\tilde{\mathcal{M}}$, and from the partial description of the boundedness region for $\tilde{\mathcal{M}}$ given by \cite{JL}, one could check that the point $(1/2,1/2,1/2)$ lives in the interior of the boundedness region for $\tilde{\mathcal{M}}$. Therefore, Corollary \ref{continuityA_E} (which, when $m=\hat\sigma_{2d-1}$, holds in all cases of ($d, \beta$) except ($2,1$)) recovers, as a special case, all previously known continuity estimates for $\tilde{\mathcal{M}}$ in $d\geq 3$. 
\end{rem}

\begin{rem}\label{comparisonwithpalsson}
    When $\beta>0$, the use of the $L^2\times L^2\rightarrow L^2$ decay bounds for the pieces seems more effective for obtaining continuity estimates than using the $L^2\times L^2\rightarrow L^1$ boundedness criteria from \cite{GHS}, since using that one the decay bounds look more like 
    \begin{equation}
    \begin{split}
        \|\mathcal{A}_{m,E}^{i,j}(f,g)\|_{1}\lesssim& N(E, 2^{-\max\{i,j\}})2^{-\max\{i,j\}a}2^{(i+j)\frac{d}{4}}\|f\|_{L^2}\|g\|_{L^2}\\
        \lesssim_{\eps}& 2^{-\max\{i,j\}\frac{(2a-d-2\beta-2\eps)}{2}}\|f\|_{L^2}\|g\|_{L^2}.
    \end{split}
    \end{equation}
    Since $2a-d-\beta>2a-d-2\beta$, it is easier to have a negative power of $2^{\max\{i,j\}}$ with our $L^2\times L^{2}\rightarrow L^2$ strategy.
    That explains why in the case  $m=\hat{\sigma}_{2d-1}$ and $E=[1,2]$, in \cite{sparsetriangle} they needed the assumption $d\geq 4$ to get continuity estimates for $\tilde{\mathcal{M}}$.
\end{rem}

\begin{proof}[Proof of Theorem \ref{piecesofA_E}]
Let us look at the case $i\geq j$. The case $j\geq i$ is analogous.

We start with the case $E=\{t\}$, with $t\sim 1$. Then $ |m(t\xi,t\eta)|\lesssim \frac{1}{(1+|\xi|+|\eta|)^{a}}$.

From the proof of Proposition \ref{sobolev} it follows that
\begin{equation}\label{boundpieces}
    \begin{split}
        \|T_{m_t}^{i,j}(f,g)\|_{L^2}\lesssim & \int_{|\eta|\sim 2^j} |\hat{g}(\eta)|\left(\int_{|\xi|\sim 2^i}\dfrac{|\hat{f}(\xi)|^2}{(1+|\xi|+|\eta|)^{2a}}\,d\xi\right)^{1/2}\,d\eta \\
        \lesssim &2^{-ia}\|f\|_{L^2} \int_{|\eta|\sim 2^j}|\hat{g}(\eta)|\,d\eta\lesssim 2^{-ia}2^{jd/2}\|f\|_{L^2} \|g\|_{L^2}
    \end{split}
\end{equation}
and for this bound one does not even need to assume $2a>d$.

For each $\nu\in \mathbb{Z}_{\leq 0}$, let $\mathcal{I}_\nu=\mathcal{I}_{\nu}(E)$ be the family of all binary intervals $[n2^{\nu},(n+1)2^{\nu})$, $n\in \N$, that contains a point in $E$. Then one has $\#\mathcal{I}_{-i}\lesssim N(E,2^{-i})$, $\forall i\geq 0$.

Take a non-negative function $\alpha\in C_c^{\infty}(\R)$, such that $\alpha\equiv 1$ in $|t|\leq 1/2$ and $\text{supp}(\alpha)\subset (-1,1)$. For $I\in \mathcal{I}_{\nu}$, say 
$I=[c_I-2^{\nu-1},c_I+2^{\nu-1})$, define
$$\alpha_{I}=\alpha(2^{-\nu}(t-c_I)).$$
Notice that $\alpha_I\equiv 1$ on $I$ and $\text{supp}(\alpha_I)\subset 2I=:\tilde{I}$, the concentric dilation of $I$ with length $2|I|$.

One has that 
\begin{equation*}
    \begin{split}
        |\mathcal{A}_{m,E}^{i,j}(f,g)(x)|^2= &\sup_{t\in E} |T_{m_t}^{i,j} (f,g)(x)|^2 \\
        \leq& \sum_{I\in \mathcal{I}_{-i}} \sup_{t\in I} |T_{m_t}^{i,j}(f,g)(x)|^2\leq \sum_{I\in \mathcal{I}_{-i}} \sup_{t\in I} |\alpha_{I}(t)T_{m_t}^{i,j}(f,g)(x)|^2.
    \end{split}
\end{equation*}
By the Fundamental theorem of calculus, 
\begin{equation*}
    \begin{split}
        \sup_{t\in I}|\alpha_I(t)&T_{m_t}^{i,j}(f,g)(x)|^2=\sup_{t\in I}
        \left|\int_{c_I-2^{-i}}^{t}\dfrac{d}{ds}|\alpha_I(s)T^{i,j}_{m_s}(f,g)(x)|^2\,ds\right|\\
        &\lesssim |I|^{-1}\int_{\tilde{I}} |T_{m_t}^{i,j}(f,g)(x)|^2\,dt+\int_{\tilde{I}} \left|T_{m_t}^{i,j}(f,g)(x)\right|\left|\frac{d}{dt}T_{m_t}^{i,j}(f,g)(x)\right|\,dt\\
        &\lesssim |I|^{-1}\int_{\tilde{I}} |T_{m_t}^{i,j}(f,g)(x)|^2\,dt+\frac{1}{\big|\tilde{I}\big|}\int_{\tilde{I}} |T_{m_t}^{i,j}(f,g)(x)|\left|2^{-i}\frac{d}{dt}T_{m_t}^{i,j}(f,g)(x)\right|\,dt.
        \end{split}
\end{equation*}

Putting all together and using inequality (\ref{boundpieces}),
\begin{equation*}
    \begin{split}
        \int_{\R^d} &|\mathcal{A}_{m,E}^{i,j} (f,g)(x)|^2 \,dx\\
        \lesssim& \sum_{I\in \mathcal{I}_{-i}} \left\{ \dashint_{\tilde{I}} \|T_{m_t}^{i,j} (f,g)(x)\|_{L^2_x}^2  \,dt+ \dashint_{\tilde{I}} \left\|T_{m_t}^{i,j}(f,g)(x)\cdot 2^{-i}\frac{d}{dt}T_{m_t}^{i,j}(f,g)(x)\right\|_{L^1_x}\,dt\right\}\\
        \lesssim &\sum_{I\in \mathcal{I}_{-i}} \left\{  2^{-2ai}2^{jd}\|f\|_{L^2}^2\|g\|_{L^2}^{2} + \dashint_{\tilde{I}} \|T_{m_t}^{i,j}(f,g)(x)\|_{L^{2}_x}\left\| 2^{-i}\frac{d}{dt}T_{m_t}^{i,j}(f,g)(x)\right\|_{L^2_x}\,dt\right\}.
        \end{split}
\end{equation*} 

   We claim that $  \left\|2^{-i}\frac{d}{dt}T_{m_t}^{i,j}(f,g)(x)\right\|_{L^2}\lesssim 2^{-i(\frac{2a}{2})}2^{j\frac{d}{2}}\|f\|_{L^2}\|g\|_{L^2}$, for all $t\in [1,2]$. 
Assuming this claim is true, then the theorem follows because
 \begin{equation*}
        \int_{\R^d} |\mathcal{A}_{m,E}^{i,j}(f,g)(x)|^2\, dx \lesssim   \sum_{I\in
        \mathcal{I}_{-i}}  2^{-i2a}2^{jd}\|f\|_{L^2}^2\|g\|_{L^2}^2 \lesssim (\#\mathcal{I}_{-i})2^{-i2a}2^{jd}\|f\|_{L^2}^2\|g\|_{L^2}^2,
        \end{equation*}
 which implies
\begin{equation*}
\begin{split}
\|\mathcal{A}_{m,E}^{i,j} (f,g)(x)\|_{L^2}^2\lesssim &N(E,2^{-i})2^{-i2a}2^{jd}\|f\|_{L^2}^2\|g\|_{L^2}^2\\
 =&N(E, 2^{-\max\{i,j\}})2^{-\max\{i,j\}2a}2^{\min\{i,j\}d}\|f\|_{L^2}^2\|g\|_{L^2}^2
 \end{split}
\end{equation*}
since we are assuming $i\geq j$.

Now let us check the claim. Since
$$T_{m_t}^{i,j}(f,g)(x)=\int_{\R^d}m(t\xi,t\eta)\hat{\psi}(2^{-i}\xi)\hat{\psi}(2^{-j}\eta)\hat{f}(\xi)\hat{g}(\eta)e^{2\pi ix\cdot (\xi+\eta)}\,d\xi d\eta,$$
one has 
$$\dfrac{d}{dt}T_{m_t}^{i,j}(f,g)(x)=\int_{\R^d} \dfrac{d}{dt}(m(t\xi,t\eta))\hat{\psi}(2^{-i}\xi)\hat{\psi}(2^{-j}\eta)\hat{f}(\xi)\hat{g}(\eta)e^{2\pi i x\cdot (\xi+\eta)}\,d\xi d\eta.$$
Observe that since $t\sim 1$, 
$\tilde{m}_t(\xi,\eta):=\dfrac{d}{dt}\left(m(t\xi, t\eta)\right)$ satisfies 
\begin{equation*}
    \begin{split}
       | \tilde{m}_t(\xi,\eta)|=&|\nabla  m(t\xi, t\eta)\cdot (\xi,\eta)|\\
       \lesssim & \dfrac{1}{(1+|\xi|+|\eta|)^{a-1}}.
    \end{split}
\end{equation*} 
Therefore, from the same argument as in estimate (\ref{boundpieces}) one has 
\begin{equation*}
    \begin{split}
        \left\|\dfrac{d}{dt}T_{m_t}^{i,j}(f,g)\right\|_{L^2}=&\|T_{\tilde{m}_t}(f^i,g^j)\|_{L^2}\\
        \lesssim& 2^{-i(\frac{2a-2}{2})}2^{j\frac{d}{2}}\|f\|_{L^2}\|g\|_{L^2}\\
        =& 2^i 2^{-i(\frac{2a}{2})}2^{j\frac{d}{2}}\|f\|_{L^2}\|g\|_{L^2}.
    \end{split}
\end{equation*}

\end{proof}

\subsection{Single-scale maximal operators associated to triangle averages and general dilation sets \texorpdfstring{$E\subset[1,2]$}{Lg}}

The goal of this subsection is to prove decay estimates for single-scale maximal bilinear operator associated with the triangle averages $\mathcal{T}_t$. Recall from the introduction that the multiplier associated with the triangle average is not admissible, hence results from the previous section does not apply. In addition to the techniques already mentioned, we need to consider a more refined angular decomposition of the multiplier here.

Here are the details. Assume for now that $t=1$. Along the lines of \cite{borgesfoster}, the way we choose to break $\mathcal{T}_1$ into pieces is 
$$\mathcal{T}_1(f,g)(x)=\sum_{i\geq 0}\sum_{j\geq 0}\left(\sum_{k\geq 0} \mathcal{T}_1^{i,j,k}(f,g)(x)\right)=\sum_{i\geq 0}\sum_{j\geq 0} \mathcal{T}_1^{i,j}(f,g)(x),$$
where the Fourier transform of a piece of the triangle operator is given by
\[
\widehat{\mathcal{T}^{i,j,k}_1}(f,g)(\xi)= \int_{\BR^d}\hat{f}(\xi-\eta)\hat{g}(\eta)m_{i,j,k}(\xi-\eta,\eta)\,d\eta.
\]
Here, the $i$ index denotes localization in $\xi$, the $j$ index denotes localization in $\eta$ and the $k$ index denotes localization in $\sin\theta$ where $\theta$ is the angle between $\xi,\eta$. The multiplier $m_{i,j,k}$ is the piece of the multiplier with the above localizations. More precisely, 
$$m_{i,j,k}(\xi,\eta)=\hat{\mu}(\xi,\eta)\hat{\psi}(2^{-i}\xi)\hat{\psi}(2^{-j}\eta)\rho_k(\xi,\eta),$$
where $\hat{\psi}$ is the same as in the previous subsection, and $\rho_k$ are smooth functions satisfying $\sum_{k\geq 0}\rho_k(\xi,\eta)\equiv1$ except at the origin and
\begin{equation}
\begin{split}
\text{supp}(\rho_k)&\subset\{(\xi, \eta)\colon 2^{-k-1}\leq |\sin(\theta)|\leq 2^{-k+1}\},\,\text{if }k\geq1;\\
    \text{supp}(\rho_0)&\subset\{(\xi, \eta)\colon |\sin(\theta)|\geq 1/2\}.
\end{split}
\end{equation}

We estimate the $L^2$ norm using Plancherel identity and Minkowski inequality:
\[
\Vert \mathcal{T}^{i,j,k}_1(f,g)\Vert_{L^2}\le \int|\hat{g}(\eta)|\Vert \hat{f}(\cdot-\eta)m_{i,j,k}(\cdot-\eta,\eta)\Vert_{L^2}\,d\eta.
\]
We change variables $\xi-\eta\mapsto \xi$ and apply Cauchy-Schwarz to further bound this by
\[
\Vert \mathcal{T}^{i,j,k}_1(f,g)\Vert_{L^2}\le \Vert g\Vert_{L^2} \left(\iint |\hat{f}(\xi)|^2|m_{i,j,k}(\xi,\eta)|^2\,d\eta d\xi\right)^{1/2}.
\]
Now, we appeal to the results on the size and support of the truncated multiplier; from the decay bounds recalled in inequality (\ref{decaytriangle}) we have that
\[
|m_{i,j,k}(\xi,\eta)|\lesssim (1+2^{\min\{i,j\}}2^{-k})^{-(d-2)/2}(1+2^{\max(i,j)})^{-(d-2)/2}
\]
and that for fixed $\xi$, the support in the $\eta$ variable has measure at most $2^{jd}2^{-k(d-1)}$. Let's assume $i\ge j$ for simplicity.
In order to get the most decay possible from the multiplier there are two cases for $k$ to consider.
$$|m_{i,j,k}(\xi,\eta)|\lesssim \begin{cases}
   2^{-(j-k)\frac{d-2}{2}}2^{-i(\frac{d-2}{2})},\,\text{ if }0\leq k\leq j;\\
   2^{-i(\frac{d-2}{2})},\,\text{ if } k> j.
\end{cases}$$

If $0\leq k\leq j$
we get the estimate on the last factor
\begin{equation*}
    \begin{split}
       \left(\int |\hat{f}(\xi)|^2\left(\int|m_{i,j,k}(\xi,\eta)|^2\,d\eta\right) \,d\xi\right)^{1/2}\lesssim &\left(\int |\hat{f}(\xi)|^22^{-(d-2)(i+j-k)}|\text{supp}(m_{i,j,k})(\xi,\cdot)|\,d\xi\right)^{1/2}\\
       \lesssim &\left(\int |\hat{f}(\xi)|^22^{-(d-2)(i+j-k)}2^{jd}2^{-k(d-1)}\,d\xi\right)^{1/2}\\
       \lesssim& \Vert f\Vert_{L^2} 2^{-i(d-2)/2+j}2^{-k/2}. 
    \end{split}
\end{equation*}

Adding over $0\leq k\leq j$ we get  
$$\sum_{k=0}^{j}\|\mathcal{T}_1^{i,j,k}(f,g)\|_{L^2}\lesssim 2^{-i(d-2)/2}2^{j}\sum_{k=0}^{j}2^{-k/2}\|f\|_{L^2}\|g\|_{L^2}\lesssim 2^{-i(d-4)/2}\|f\|_{L^2}\|g\|_{L^2}.$$
Alternatively, for each $k$ such that $k>j$, the decay of the multiplier is instead 
$$|m_{i,j,k}(\xi,\eta)|\lesssim (1+2^{\max\{i,j\}})^{-(d-2)/2}$$
so we get instead
\[
\|\mathcal{T}_1^{i,j,k}(f,g)\|_{L^2}\lesssim \|g\|_{L^2} \Vert f\Vert_{L^2} 2^{-(i(d-2)-jd+k(d-1))/2}
\]
and we have 
$$\sum_{k=j}^{\infty} \|\mathcal{T}_1^{i,j,k}(f,g)\|_{L^2}\lesssim \|f\|_{L^2}\|g\|_{L^2} 2^{-i(d-2)/2+jd/2}2^{-j(d-1)/2}\lesssim 2^{-i(d-3)/2}\|f\|_{L^2}\|g\|_{L^2}.$$

Putting all together, for any $i\geq j$
\begin{equation}\label{decayforsinglescaletriangle}
    \begin{split}
\|\mathcal{T}_1^{i,j}(f,g)\|_{L^{2}}\lesssim\sum_{k=0}^{\infty}\|\mathcal{T}_{1}^{i,j,k}\|_{L^2}
    \lesssim & (2^{-i(d-3)/2}+2^{-i(d-4)/2})\|f\|_{L^{2}}\|g\|_{L^{2}}\\
    \lesssim &2^{-i(d-4)/2}\|f\|_{L^{2}}\|g\|_{L^{2}},
    \end{split}
\end{equation}
and more generally for any $i,j\geq 0$,
\begin{equation}
    \begin{split}
\|\mathcal{T}_1^{i,j}(f,g)\|_{L^{2}}\lesssim 2^{-\max\{i,j\}(d-4)/2}\|f\|_{L^{2}}\|g\|_{L^{2}}.
    \end{split}
\end{equation}

This argument works for any $t\in [1,2]$. Indeed, let 
$$m_{i,j,k;t}(\xi,\eta)=\hat{\mu}(t\xi,t\eta)\hat{\psi}(2^{-i}\xi)\hat{\psi}(2^{-j}\eta)\rho_k(\xi,\eta),$$
so 
$$\mathcal{T}_t(f,g)(x)=\sum_{i\geq 0}\sum_{j\geq 0}\sum_{k\geq 0} \mathcal{T}_{t}^{i,j,k}(f,g)(x)$$
where 
$$\mathcal{T}_{t}^{i,j,k}(f,g)(x)=\int \hat{f}(\xi)\hat{g}(\eta)m_{i,j,k;t}(\xi,\eta)e^{2\pi i x\cdot(\xi+\eta)}\,d\xi d\eta.$$
Observe that 
\[
\widehat{\mathcal{T}^{i,j,k}_t}(f,g)(\xi)= \int_{\BR^d}\hat{f}(\xi-\eta)m_{i,j,k;t}(\xi-\eta,\eta)\hat{g}(\eta)\,d\eta,
\]
and 
$$\mathcal{T}_{t}^{i,j}(f,g)(x):=\mathcal{T}_{t}(f^{i},g^{j})(x)=\sum_{k\geq 0}\mathcal{T}_{t}^{i,j,k}(f,g)(x).$$
It is immediate from the proof of inequality (\ref{decayforsinglescaletriangle}) that for any $t\in [1,2]$, 
\begin{equation}\label{decayforsinglescaletriangleatt}
    \|\mathcal{T}_t^{i,j}(f,g)\|_{L^{2}}\lesssim 2^{-\max\{i,j\}(d-4)/2}\|f\|_{L^{2}}\|g\|_{L^{2}}.
\end{equation}

Next we will use these decay bounds for pieces of $T_t$ to take of the more general dilation set $E\subset [1,2]$.

\begin{thm}[Decay estimates for the pieces of $\mathcal{T}_{E}$]\label{decayforpiecesofTE}
    Let $E\subset [1,2]$ with $\beta=\text{dim}_{M}E$. If $d>4+\beta$, then for any $i,\,j\geq 0$ one has
    \begin{equation}
        \begin{split}
         \|\mathcal{T}_{E}(f^i,g^j)\|_{L^2}&\lesssim N(E,2^{-\max\{i,j\}})^{\frac{1}{2}}2^{-\max\{i,j\}(d-4)/2}\|f^i\|_{L^2}\|g^j\|_{L^2} \\
          &\lesssim_{\eps} 2^{-\max\{i,j\}(d-4-\beta-\eps)/2}\|f\|_{L^2}\|g\|_{L^2}.
        \end{split}
    \end{equation}
\end{thm}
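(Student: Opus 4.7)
The plan is to mirror the proof of Theorem \ref{piecesofA_E}, using the pointwise-in-$t$ bound (\ref{decayforsinglescaletriangleatt}) as the triangle-operator analog of the estimate (\ref{boundpieces}) used there. By symmetry, assume $i \ge j$. Cover $E$ by the family $\mathcal{I}_{-i}$ of binary intervals of length $2^{-i}$ meeting $E$, so that $\#\mathcal{I}_{-i} \lesssim N(E, 2^{-i})$. For each $I \in \mathcal{I}_{-i}$ choose the same smooth bump $\alpha_I$ as in the proof of Theorem \ref{piecesofA_E}, supported on $\tilde I = 2I$ with $\alpha_I \equiv 1$ on $I$. Then
\begin{equation*}
|\mathcal{T}_E(f^i,g^j)(x)|^2 \le \sum_{I\in \mathcal{I}_{-i}} \sup_{t\in I}|\alpha_I(t)\mathcal{T}_t^{i,j}(f,g)(x)|^2,
\end{equation*}
and the Fundamental Theorem of Calculus controls each supremum by
\begin{equation*}
|I|^{-1}\!\int_{\tilde I} |\mathcal{T}_t^{i,j}(f,g)(x)|^2\,dt + \int_{\tilde I} |\mathcal{T}_t^{i,j}(f,g)(x)|\,\bigl|\tfrac{d}{dt}\mathcal{T}_t^{i,j}(f,g)(x)\bigr|\,dt.
\end{equation*}

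Integrating in $x$, applying Cauchy--Schwarz to the second term, and inserting the balancing factor $2^{-i}\cdot 2^{i}=1$ (using $|I|\sim 2^{-i}$) yields
\begin{equation*}
\|\mathcal{T}_E(f^i,g^j)\|_{L^2}^2 \lesssim \sum_{I\in\mathcal{I}_{-i}} \left\{\dashint_{\tilde I}\|\mathcal{T}_t^{i,j}(f,g)\|_{L^2}^2\,dt + \dashint_{\tilde I}\|\mathcal{T}_t^{i,j}(f,g)\|_{L^2}\,\bigl\|2^{-i}\tfrac{d}{dt}\mathcal{T}_t^{i,j}(f,g)\bigr\|_{L^2}\,dt\right\}.
\end{equation*}
The first factor is uniformly bounded by $2^{-i(d-4)/2}\|f\|_{L^2}\|g\|_{L^2}$ for all $t\in\tilde I\subset [1/2,4]$, by (\ref{decayforsinglescaletriangleatt}).

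The main step is the analogous claim for the derivative:
\begin{equation*}
\bigl\|2^{-i}\tfrac{d}{dt}\mathcal{T}_t^{i,j}(f,g)\bigr\|_{L^2} \lesssim 2^{-i(d-4)/2}\|f\|_{L^2}\|g\|_{L^2}, \qquad t\in [1/2,4].
\end{equation*}
Since the angular cutoffs $\rho_k$ do not depend on $t$, differentiating the multiplier $m_{i,j,k;t}$ only hits $\hat\mu(t\xi,t\eta)$, producing $\nabla\hat\mu(t\xi,t\eta)\cdot(\xi,\eta)$. By (\ref{decaytriangle}) applied with $|\alpha|+|\beta|=1$, this enjoys exactly the same decay as $\hat\mu(t\xi,t\eta)$ itself, multiplied by a factor $|(\xi,\eta)|\lesssim 2^i$ on the support of $\hat\psi(2^{-i}\xi)\hat\psi(2^{-j}\eta)$. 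Hence the angular decomposition argument used to derive (\ref{decayforsinglescaletriangle}) applies verbatim to $2^{-i}\partial_t \mathcal{T}_t^{i,j,k}$, giving the claim after summing in $k$.

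Combining, the right-hand side is bounded by $\#\mathcal{I}_{-i}\cdot 2^{-i(d-4)}\|f\|_{L^2}^2\|g\|_{L^2}^2 \lesssim N(E,2^{-i})\,2^{-i(d-4)}\|f\|_{L^2}^2\|g\|_{L^2}^2$; taking square roots and recalling $i=\max\{i,j\}$ yields the first displayed inequality, while the second follows from the definition of upper Minkowski dimension and $\|f^i\|_{L^2}\le\|f\|_{L^2}$. The only delicate point is verifying the uniform-in-$t$ decay of the derivative piece with the angular localization; because the $\rho_k$ are $t$-independent and (\ref{decaytriangle}) covers all derivatives, this does not require any essentially new analysis beyond what was used in deriving (\ref{decayforsinglescaletriangleatt}).
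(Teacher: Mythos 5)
Your argument is essentially the paper's proof of Theorem \ref{decayforpiecesofTE}: cover $E$ by binary intervals of length $2^{-\max\{i,j\}}$, bound the supremum over each interval by the FTC identity with a bump $\alpha_I$, and observe that $\tfrac{d}{dt}\hat\mu(t\xi,t\eta)=\nabla\hat\mu(t\xi,t\eta)\cdot(\xi,\eta)$ obeys the same $\theta$-dependent decay (\ref{decaytriangle}) up to a factor $2^{\max\{i,j\}}$ that is absorbed by the $|I|\sim 2^{-i}$ normalization, so the angular-decomposition estimate leading to (\ref{decayforsinglescaletriangleatt}) applies unchanged. The only extra observation you make explicit, correctly, is that the angular cutoffs $\rho_k$ are $t$-independent, which is precisely why the $k$-summation goes through verbatim for the $\partial_t$ piece.
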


With the same argument as the one used in Subsection \ref{Eadmissiblemsubsection} for $\mathcal{A}_{m,E}$, the proof of Theorem \ref{sobolevtrianglethm} and Corollary \ref{continuityTE} below (Sobolev smoothing bounds and continuity estimates for $\mathcal{T}_{E}$ respectively) will follow once we prove Theorem \ref{decayforpiecesofTE}.

\begin{cor}[Continuity estimates for $\mathcal{T}_E$]\label{continuityTE}
Let $E\subset [1,2]$ with upper Minkowski dimension $\text{dim}_{M}(E)=\beta$, and assume that $d>4+\beta$. Then, there exists $\gamma>0$ such that
\begin{equation}
    \|\mathcal{T}_{E}(f-\tau_h f,g)\|_{L^2}+\|\mathcal{T}_{E}(f,g-\tau_h g)\|_{L^2}\lesssim |h|^{\gamma}\|f\|_{L^2}\|g\|_{L^2},\quad\forall |h|<1.
\end{equation}
Moreover, under the same hypothesis there exist $\gamma_1,\gamma_2>0$ such that
\begin{equation}
    \|\mathcal{T}_{E}(f-\tau_{h_1} f, g-\tau_{h_2}g)\|_{L^2}\lesssim |h_1|^{\gamma_1}|h_2|^{\gamma_2}\|f\|_{L^2} \|g\|_{L^2},\quad \forall |h_1|< 1, |h_2|<1.
\end{equation}
\end{cor}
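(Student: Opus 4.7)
The plan is to mimic exactly the derivation of Corollary \ref{continuityA_E} from Theorem \ref{sobolevE}, using Theorem \ref{sobolevtrianglethm} as the Sobolev input in place of Theorem \ref{sobolevE}. Since $d>4+\beta$, the range $s_1+s_2<\tfrac{d-4-\beta}{2}$ is a nonempty open set of exponents, which is the only feature of the Sobolev theorem that the argument uses.

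For the first inequality, I would fix $s_1\in(0,\tfrac{d-4-\beta}{2})$ and set $s_2=0$, and apply Theorem \ref{sobolevtrianglethm} to $(f-\tau_h f, g)$ to get
\[
\|\mathcal{T}_E(f-\tau_h f,g)\|_{L^2}\lesssim \|f-\tau_h f\|_{H^{-s_1}}\|g\|_{L^2}=\left\|\frac{(1-e^{-2\pi i h\cdot\xi})}{(1+|\xi|^2)^{s_1/2}}\hat f(\xi)\right\|_{L^2_\xi}\|g\|_{L^2}.
\]
Then I would split the $\xi$-integral into the regions $|\xi|\le 1/|h|$ and $|\xi|>1/|h|$, bounding the multiplier $\tfrac{|1-e^{-2\pi i h\cdot\xi}|^2}{(1+|\xi|)^{2s_1}}$ by $|h|^{2\gamma}$ with $\gamma=\tfrac12\min\{2s_1,2\}>0$, exactly as in the proof of Corollary \ref{continuityA_E}. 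The estimate with the translation on $g$ is symmetric, requiring instead $s_1=0$ and $s_2\in(0,\tfrac{d-4-\beta}{2})$.

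For the second (biparameter) inequality, I would choose $s_1,s_2>0$ with $s_1+s_2<\tfrac{d-4-\beta}{2}$ (possible since the range is open and $d>4+\beta$) and apply Theorem \ref{sobolevtrianglethm} directly to $(f-\tau_{h_1}f,g-\tau_{h_2}g)$, obtaining
\[
\|\mathcal{T}_E(f-\tau_{h_1}f,g-\tau_{h_2}g)\|_{L^2}\lesssim \|f-\tau_{h_1}f\|_{H^{-s_1}}\|g-\tau_{h_2}g\|_{H^{-s_2}},
\]
and then applying the same pointwise multiplier bound to each factor separately to conclude with $\gamma_i=\min\{s_i,1\}$. Alternatively, one can simply multiply the two one-variable bounds from the first part and take square roots, exactly as in the closing paragraph of the proof of Corollary \ref{continuity222}.

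There is no real obstacle here: the hypothesis $d>4+\beta$ is precisely what guarantees the open Sobolev range needed, and the translation-to-Sobolev bound is a one-line Fourier estimate. The proof is a direct transcription of the argument used for $\mathcal{A}_{m,E}$, with Theorem \ref{sobolevtrianglethm} supplying the smoothing replacing Theorem \ref{sobolevE}.
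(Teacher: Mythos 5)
Your proposal is correct and matches the paper's (implicit) argument: the paper explicitly states that Corollary \ref{continuityTE} follows ``with the same argument as the one used for $\mathcal{A}_{m,E}$,'' meaning precisely the transcription of the proof of Corollary \ref{continuityA_E} with Theorem \ref{sobolevtrianglethm} supplying the Sobolev input in place of Theorem \ref{sobolevE}, exactly as you describe.
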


\begin{proof}[Proof of Theorem \ref{decayforpiecesofTE}]
This proof is an adaptation of Theorem \ref{piecesofA_E}. We sketch the argument. Assume we are in the case $i\geq j$. Following the same notation of Theorem \ref{piecesofA_E},

   \begin{equation*}
    \begin{split}
        |\mathcal{T}_{E}(f^i,g^j)(x)|^2= &\sup_{t\in E} |\mathcal{T}_{t} (f^i,g^j)(x)|^2 \\
        \leq& \sum_{I\in \mathcal{I}_{-i}} \sup_{t\in I} |\mathcal{T}_{t}(f^i,g^j)(x)|^2\leq \sum_{I\in \mathcal{I}_{-i}} \sup_{t\in I} |\alpha_{I}(t)\mathcal{T}_{t}(f^i,g^j)(x)|^2.
    \end{split}
\end{equation*}
For any $I\in \mathcal{I}_{-i},$
\begin{equation*}
    \begin{split}
        \sup_{t\in I}|\alpha_I(t)&\mathcal{T}_{t}(f^i,g^j)(x)|^2=\sup_{t\in I}
        \left|\int_{c_I-2^{\nu}}^{t}\dfrac{d}{ds}|\alpha_I(s)\mathcal{T}_{s}(f^i,g^j)(x)|^2\,ds\right|\\
        &\lesssim |I|^{-1}\int_{\tilde{I}} |\mathcal{T}_{t}(f^i,g^j)(x)|^2\,dt+\frac{1}{|\tilde{I}|}\int_{\tilde{I}} |\mathcal{T}_{t}(f^i,g^j)(x)|\left|2^{-i}\frac{d}{dt}\mathcal{T}_{t}(f^i,g^j)(x)\right|\,dt.
        \end{split}
\end{equation*}

Observe that 
$$\frac{d}{dt}\mathcal{T}_{t}(f^{i},g^j)(x)=\int \frac{d}{dt}(\hat{\mu}(t\xi,t\eta)) \hat{\psi}_i(\xi)\hat{\psi}_j(\eta)\hat{f}(\xi)\hat{g}(\eta)e^{2\pi x\cdot (\xi+\eta)}\,d\xi d\eta$$
which is a bilinear multiplier operator whose multiplier $\tilde{m}(\xi,\eta)=\frac{d}{dt}(\hat{\mu}(t\xi,t\eta))$ satisfies for $|\xi|\sim 2^{i}$, $|\eta|\sim 2^{j}$,
$$|\tilde{m}(\xi,\eta)|=|\nabla\hat{\mu}(t\xi,t\eta)\cdot (\xi,\eta)|\lesssim 2^{i}(1+2^{\min\{i,j\}}|\sin(\theta)|)^{-(d-2)/2}(1+2^{\max\{i,j\}})^{-(d-2)/2}.$$
The same strategy that lead to estimate (\ref{decayforsinglescaletriangleatt}) gives us
$$\left\|2^{-i}\frac{d}{dt}\mathcal{T}_t(f^i,g^j)\right\|_{L^2}\lesssim 2^{-i\frac{(d-4)}{2}}\|f\|_{L^{2}}\|g\|_{L^2}.$$
This is enough since it will imply for $i\geq j$
\begin{equation*}
    \begin{split}
        &\int |\mathcal{T}_E(f^i,g^j)(x)|^2 \,dx\\
        \leq& \sum_{I\in \mathcal{I}_{-i}} \left(\dashint_{\tilde{I}} \|\mathcal{T}_t(f^{i},g^j)(x)\|_{L^2_x}^2 \,dt+\dashint_{\tilde{I}} \|\mathcal{T}_t(f^{i},g^j)(x)\|_{L^2_x}\Big\|2^{-i}\frac{d}{dt}\mathcal{T}_t(f^i,g^j)\Big\|_{L^2_x}\,dt\right)\\
        \lesssim & N(E,2^{-i}) 2^{-i(d-4)}\|f\|_{L^{2}}^{2}\|g\|_{L^{2}}^{2}.
    \end{split}
\end{equation*}

\end{proof}

\section{Bilinear biparameter-like maximal operators with dilation sets \texorpdfstring{$E_1,\,E_2\subset [1,2] $}{Lg}}\label{sec: biparameter}

Since we are working in the bilinear setting, it is also natural to investigate boundedness properties of biparameter analogues of the operator $\mathcal{A}_{m,E}$ or its associated multi-scale maximal operator $\mathcal{M}_{m,E}$. Namely, let $m$ be a bilinear multiplier in $\R^d\times \R^d$ and let $E_1,\,E_2\subset [1,2]$ be two dilation sets. We define
\begin{equation}
\mathcal{A}_{m,E_1,E_2}^{(2)}(f,g)(x):=\sup_{t_1\in E_1,t_2\in E_2}|T_{m_{t_1,t_2}}(f,g)(x)|,
 \end{equation}
 where 
 \begin{equation}
  T_{m_{t_1,t_2}}(f,g)(x)=\int_{\R^{2d}}\hat{f}(\xi)\hat{g}(\eta)m(t_1\xi,t_2\eta)e^{2\pi i x\cdot(\xi+\eta)}\,d\xi d\eta,
 \end{equation}and its associated multi-scale maximal operator
 \begin{equation}
\mathcal{M}^{(2)}_{m,E_1,E_2}(f,g)(x):=\sup_{l\in \Z}\sup_{t_1\in E_1,\,t_2\in E_2}|T_{m_{2^lt_1,2^lt_2}}(f,g)(x)|.
\end{equation}

    We observe that in the particular case where $m(\xi,\eta)=\hat{\mu}(\xi,\eta)$, for a compactly supported finite Borel measure $\mu$ in $\R^{2d}$, then 
    $$T_{m_{t_1,t_2}}(f,g)(x)=\int f(x-t_1y)g(x-t_2z)\,d\mu(y,z).$$
    In this case the biparameter multi-scale maximal operator we are interested in takes the form
    \begin{equation}
\mathcal{M}_{\hat{\mu},E_1,E_2}^{(2)}(f,g)(x):=\sup_{l\in \Z}\sup_{t_1\in E_1,\,t_2\in E_2}\left|\int f(x-2^lt_1y) g(x-2^{l}t_2z)\,d\mu(y,z)\right|.
    \end{equation}
    Note that for $\mathcal{M}_{\hat{\mu},E_1,E_2}^{(2)}(f,g)$ the two averaging parameters for $f$ and $g$ are not the same but they vary at the same dyadic scale $2^l$. It would also be interesting to study the following larger bilinear biparameter operator given by 
$$\mathcal{M}^{bip}_{\hat{\mu},E_1,E_2}(f,g)(x):=\sup_{k,l\in \Z}\sup_{t_1\in E_1,\,t_2\in E_2}\left|\int f(x-2^kt_1y) g(x-2^{l}t_2z)\,d\mu(y,z)\right|,$$
    in which the averaging parameter for $f$ and $g$ can be any pair in $2^{\Z}E_1\times 2^{\Z}E_2$. For the particular case when $E_1=E_2=[1,2]$, these two different biparameter operators become
    $$\mathcal{M}_{\hat{\mu},[1,2],[1,2]}^{(2)}(f,g)(x)=\sup_{l\in \Z}\sup_{t,s\in [2^{l},2^{l+1}]}\left|\int f(x-ty) g(x-sz)\,d\mu(y,z)\right|$$
    and
    $$\mathcal{M}^{bip}_{\hat{\mu},[1,2],[1,2]}(f,g)(x)=\sup_{t_1,t_2>0}\left|\int f(x-t_1y) g(x-t_2z)\,d\mu(y,z)\right|.$$

 Inspired from our earlier approach, we would be interested in understanding the decay properties of the pieces $(\mathcal{A}^{(2)}_{m,E_1,E_2})^{i,j}$ of this operator, defined as
 $$(\mathcal{A}_{m,E_1,E_2}^{(2)})^{i,j}(f,g)(x):=\mathcal{A}_{m,E_1,E_2}^{(2)}(f^i,g^j)(x)$$
for $i,j\geq 0$, where $f^i,\, g^j$ are defined as in Subsection \ref{Eadmissiblemsubsection}. Our goal in this section is to derive Sobolev smoothing estimates and continuity estimates for $\mathcal{A}_{m,E_1, E_2}^{(2)}$, analogously as for the one-parameter operators. In the next section, such results will be applied to derive sparse bound for the multi-scale operator $\mathcal{M}_{\hat{\mu},E_1, E_2}^{(2)}$. A key difference between the two biparameter operators $\mathcal{M}^{(2)}_{m,E_1, E_2}$ and $\mathcal{M}^{bip}_{\hat{\mu},E_1,E_2}$ is that the sparse theory for the former is essentially one parameter. Once one has a good understanding of the biparameter behavior of its corresponding single-scale operator $\mathcal{A}^{(2)}_{\hat{\mu}, E_1, E_2}$, the sparse bound would follow via a similar argument as in the one-parameter case.

The theorem below can be seen as a biparameter version of Theorem \ref{piecesofA_E}.

 \begin{thm}[Decay bounds for pieces of $\mathcal{A}_{m,E_1,E_2}^{(2)}$]\label{thm:piecesbiparameter}
 Let $E_1$ and $E_2$ be two subsets of $[1,2]$ with upper Minkowski dimension $\text{dim}_{M}(E_i)=\beta_i,\,i=1,2$. Let $m$ be an $a$-admissible bilinear multiplier up to order $2$, and assume that $2a>d+\beta_1+\beta_2$. Then,
    \begin{equation*}
        \begin{split}
\left\|\mathcal{A}_{m,E_1,E_2}^{(2)}(f^i,g^j)\right\|_{L^2}&\lesssim N(E_1,2^{-i})^{\frac{1}{2}}N(E_2,2^{-j})^{\frac{1}{2}}2^{-\max\{i,j\}a}2^{\min\{i,j\}\frac{d}{2}}\|f\|_{L^2}\|g\|_{L^2}\\
            &\lesssim_{\eps}  2^{-\frac{1}{2}\max\{i,j\}(2a-d-\beta_1-\beta_2-\eps)}\|f\|_{L^2}\|g\|_{L^2}.
        \end{split}
    \end{equation*}
    \end{thm}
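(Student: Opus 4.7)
The plan is to extend the proof of Theorem \ref{piecesofA_E} to the biparameter setting by replacing the single covering argument with a product covering of $E_1\times E_2$, and replacing the one-dimensional fundamental theorem of calculus step with a biparameter version. Without loss of generality suppose $i\geq j$. Let $\mathcal{I}_{-i}(E_1)$ and $\mathcal{I}_{-j}(E_2)$ denote covers of $E_1,E_2$ by dyadic intervals of lengths $2^{-i}$ and $2^{-j}$ respectively, with cardinalities $\lesssim N(E_1,2^{-i})$ and $N(E_2,2^{-j})$, and attach bump functions $\alpha_{I_1},\alpha_{I_2}$ supported on slight enlargements $\tilde I_1,\tilde I_2$ and equal to one on the associated intervals, exactly as in the proof of Theorem \ref{piecesofA_E}. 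Then
\[
|\mathcal{A}^{(2)}_{m,E_1,E_2}(f^i,g^j)(x)|^2 \leq \sum_{I_1,I_2} \sup_{(t_1,t_2)\in I_1\times I_2}|\alpha_{I_1}(t_1)\alpha_{I_2}(t_2)T^{i,j}_{m_{t_1,t_2}}(f,g)(x)|^2.
\]

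Writing $F(t_1,t_2)=T^{i,j}_{m_{t_1,t_2}}(f,g)(x)$ and observing that $(\alpha_{I_1}\alpha_{I_2}F)^2$ vanishes at the two lower edges of $\tilde I_1\times\tilde I_2$, integration of $\partial_{t_1}\partial_{t_2}(\alpha_{I_1}\alpha_{I_2}F)^2$ over $\tilde I_1\times\tilde I_2$ together with the product rule and $|\alpha_{I_k}'|\lesssim |I_k|^{-1}$ will yield the biparameter FTC estimate
\begin{align*}
\sup_{I_1\times I_2}|F|^2 \lesssim \; & \dashint\dashint_{\tilde I_1\times\tilde I_2}|F|^2 + |I_1|\dashint\dashint|F\partial_{t_1}F| + |I_2|\dashint\dashint|F\partial_{t_2}F| \\
& + |I_1||I_2|\dashint\dashint|\partial_{t_1}F\partial_{t_2}F| + |I_1||I_2|\dashint\dashint|F\partial_{t_1}\partial_{t_2}F|.
\end{align*}
Integrating in $x$, applying Cauchy--Schwarz in $x$ to each product integral, and then summing over the covering intervals, the matter reduces to controlling the $L^2_x$ norms of $\partial_{t_1}^{\alpha_1}\partial_{t_2}^{\alpha_2}T^{i,j}_{m_{t_1,t_2}}(f,g)$ uniformly in $(t_1,t_2)\in[1,2]^2$ for $(\alpha_1,\alpha_2)\in\{0,1\}^2$.

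Each such derivative bound will be obtained by the Plancherel--Minkowski argument of Proposition \ref{sobolev} used already in Theorem \ref{piecesofA_E}: expanding via the chain rule, $\partial_{t_1}^{\alpha_1}\partial_{t_2}^{\alpha_2}m(t_1\xi,t_2\eta)$ is a sum of $|\xi|^{\alpha_1}|\eta|^{\alpha_2}$ times partial derivatives of $m$ of total order $\alpha_1+\alpha_2\leq 2$, so on the spectral support $|\xi|\sim 2^i,|\eta|\sim 2^j$ the admissibility assumption furnishes
\[
\|\partial_{t_1}^{\alpha_1}\partial_{t_2}^{\alpha_2}T^{i,j}_{m_{t_1,t_2}}(f,g)\|_{L^2}\lesssim 2^{i\alpha_1+j\alpha_2}\,2^{-\max\{i,j\}a}2^{\min\{i,j\}d/2}\|f\|_{L^2}\|g\|_{L^2}.
\]
Since $|I_1|=2^{-i}$ and $|I_2|=2^{-j}$, the factors $|I_1|^{\alpha_1}|I_2|^{\alpha_2}$ from the FTC display exactly cancel the $2^{i\alpha_1+j\alpha_2}$ arising in each term, so all five terms contribute at the same order $2^{-2\max\{i,j\}a}2^{\min\{i,j\}d}\|f\|_{L^2}^2\|g\|_{L^2}^2$ and produce after summation the claim $\|\mathcal{A}^{(2)}_{m,E_1,E_2}(f^i,g^j)\|_{L^2}^2\lesssim N(E_1,2^{-i})N(E_2,2^{-j})\,2^{-2\max\{i,j\}a}2^{\min\{i,j\}d}\|f\|_{L^2}^2\|g\|_{L^2}^2$. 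The main obstacle in the plan is the mixed-derivative term $|F\partial_{t_1}\partial_{t_2}F|$, which forces the hypothesis of $a$-admissibility up to order $2$ rather than order $1$; the second part of the claim will then follow from $N(E_k,2^{-i})\lesssim_{\eps}2^{i(\beta_k+\eps)}$ after noting that the resulting exponent in $(i,j)$ is maximized on $i=j$ under $i\geq j$ since $\beta_2+d>0$.
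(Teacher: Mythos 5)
Your proposal is correct and follows essentially the same approach as the paper: cover $E_1\times E_2$ by $N(E_1,2^{-i})\times N(E_2,2^{-j})$ products of dyadic intervals with bump functions, apply a biparameter fundamental theorem of calculus using vanishing on the lower edges of each rectangle $\tilde I_1\times\tilde I_2$, then control each resulting term via Plancherel--Minkowski bounds for $\partial_{t_1}^{\alpha_1}\partial_{t_2}^{\alpha_2}T^{i,j}_{m_{t_1,t_2}}$, $(\alpha_1,\alpha_2)\in\{0,1\}^2$, exactly as in the paper. Your FTC display is in fact slightly more careful than the paper's: since $\partial_{t_1}\partial_{t_2}|F|^2 = 2\mathrm{Re}(\overline{\partial_{t_1}F}\,\partial_{t_2}F)+2\mathrm{Re}(\bar F\,\partial_{t_1}\partial_{t_2}F)$, the cross term $|\partial_{t_1}F\,\partial_{t_2}F|$ that you keep really does appear, whereas the paper's displayed bound records only $|F||\partial_{s_1,s_2}F|$; this extra term obeys the same final estimate $|I_1||I_2|\,\|\partial_{t_1}F\|_{L^2}\|\partial_{t_2}F\|_{L^2}\lesssim 2^{-2\max\{i,j\}a}2^{\min\{i,j\}d}\|f\|_{L^2}^2\|g\|_{L^2}^2$ by Cauchy--Schwarz, so neither version's conclusion is affected.
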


\begin{proof}[Proof of Theorem \ref{thm:piecesbiparameter}]
For $\nu\leq 0$, let $\mathcal{I}_{\nu}^{(k)}(E)$ be the family of all binary intervals $[n2^{\nu},(n+1)2^{\nu+1})$ that intersect $E_k$, $k=1,2$. Then we know that 
$$\#\mathcal{I}_{\nu}^{(k)}\lesssim N(E_k,2^{\nu})\lesssim_{\eps} 2^{-\nu(\beta_k+\eps)},\quad k=1,2.$$
One has 
\begin{equation}\label{inequalityforsupE1E2}
\begin{split}
|\mathcal{A}^{(2)}_{m,E_1,E_2}(f^i,g^j)(x)|^2\leq &\sum_{I\in \mathcal{I}^{(1)}_{-i}}\sum_{J\in \mathcal{I}^{(2)}_{-j}}\sup_{t_1\in I, t_2\in J}|T_{m_{t_1,t_2}}(f^i,g^j)|^2\\
=&\sum_{I\in \mathcal{I}^{(1)}_{-i}}\sum_{J\in \mathcal{I}^{(2)}_{-j}}\sup_{t_1\in I, t_2\in J}\alpha_I^{2}(t_1)\alpha_J^{2}(t_2)|T_{m_{t_1,t_2}}(f^i,g^j)|^2
\end{split}
\end{equation}
with $\alpha_I$ as defined in the proof of Theorem \ref{piecesofA_E}. 

By the Fundamental Theorem of Calculus and the fact that $\supp(\alpha_I)\subset 2I=[c_I-2^{-i},c_I+2^{-i})$, we have for any $t_1\in I$ and $t_2\in J$,
\begin{equation*}
    \begin{split}
&\alpha_I^2(t_1)\alpha_J^2(t_2)|T_{m_{t_1,t_2}}(f^i,g^j)(x)|^2\\
=&\int_{c_I-2^{-i}}^{t_1}\int_{c_J-2^{-j}}^{t_2}\Big\{2\alpha_I(s_1)\alpha_{I}'(s_1)2\alpha_J(s_2)\alpha_J'(s_2)|T_{m_{s_1,s_2}}(f,g)(x)|^2\\
&\quad\quad\quad\quad\quad+2\alpha_I(s_1)\alpha_{I}'(s_1)\alpha_J^2(s_2)\partial_{s_2}(|T_{m_{s_1,s_2}}(f,g)(x)|^2)\\
&\quad\quad\quad\quad\quad + \alpha_{I}^2(s_1)2\alpha_J(s_2)\alpha_J'(s_2)\partial_{s_1}(|T_{m_{s_1,s_2}}(f,g)(x)|^2)\\
&\quad\quad\quad\quad\quad + \alpha_{I}^2(s_1)\alpha_J^2(s_2)\partial_{s_1,s_2}(|T_{m_{s_1,s_2}}(f,g)(x)|^2)\Big\}\,ds_1 ds_2.
    \end{split}
\end{equation*}
Hence,
\begin{equation*}
\begin{split}
     &\sup_{t_1\in I, t_2\in J}|\alpha_I(t_1)\alpha_J(t_2)T_{m_{t_1,t_2}}(f^i,g^j)(x)|^2\\
     \lesssim &\int_{\tilde{I}}\int_{\tilde{J}}\Big\{|I|^{-1}|J|^{-1}|T_{m_{s_1,s_2}}(f,g)(x)|^2\\
&\quad\quad\quad\quad+|I|^{-1}|T_{m_{s_1,s_2}}(f,g)(x)||\partial_{s_2}T_{m_{s_1,s_2}}(f,g)(x)|\\
&\quad\quad\quad\quad+|J|^{-1}|T_{m_{s_1,s_2}}(f,g)(x)||\partial_{s_1}T_{m_{s_1,s_2}}(f,g)(x)|\\
&\quad\quad\quad\quad+|T_{m_{s_1,s_2}}(f,g)(x)||\partial_{s_1,s_2}(T_{m_{s_1,s_2}}(f,g)(x))|\Big\}\,ds_1 ds_2\\
\lesssim &\dashint_{\tilde{I}}\dashint_{\tilde{J}} \Big\{|T_{m_{s_1,s_2}}(f,g)(x)|^2 +|T_{m_{s_1,s_2}}(f,g)(x)||2^{-j}\partial_{s_2}T_{m_{s_1,s_2}}(f,g)(x)|\\
& \quad\quad\quad\quad+|T_{m_{s_1,s_2}}(f,g)(x)||2^{-i}\partial_{s_1}T_{m_{s_1,s_2}}(f,g)(x)|\\
&\quad\quad\quad\quad +|T_{m_{s_1,s_2}}(f,g)(x)||2^{-i}2^{-j}\partial_{s_1,s_2}(T_{m_{s_1,s_2}}(f,g)(x))|\Big\}\,ds_1ds_2.
\end{split}
\end{equation*}
Taking integrals in inequality (\ref{inequalityforsupE1E2}), and by Fubini and H\"older's inequality, 
\begin{equation*}
    \begin{split}
       \int |\mathcal{A}^{(2)}_{m,E_1,E_2}(f^i,g^j)&(x)|^2\,dx\\
       \lesssim \sum_{I\in \mathcal{I}^{(1)}_{-i}}\sum_{J\in \mathcal{I}^{(2)}_{-j}}&\int \sup_{t_1\in I, t_2\in J}|\alpha_I(t_1)\alpha_J(t_2)T_{m_{t_1,t_2}}(f^i,g^j)(x)|^2\,dx\\
       \lesssim\sum_{I\in \mathcal{I}^{(1)}_{-i}}\sum_{J\in \mathcal{I}^{(2)}_{-j}}&\dashint_{\tilde{I}}\dashint_{\tilde{J}}\Big\{ \|T_{m_{s_1,s_2}}(f^i,g^j)(x)\|_2^{2}\\
+&\|T_{m_{s_1,s_2}}(f^i,g^j)(x)\cdot 2^{-j}\partial_{s_2}T_{m_{s_1,s_2}}(f^i,g^j)(x)\|_1\\
+&\|T_{m_{s_1,s_2}}(f^i,g^j)(x)\cdot 2^{-i}\partial_{s_1}T_{m_{s_1,s_2}}(f^i,g^j)(x)\|_1\\
+&\|T_{m_{s_1,s_2}}(f^i,g^j)(x)\cdot 2^{-i}2^{-j}\partial_{s_1,s_2}(T_{m_{s_1,s_2}}(f^i,g^j)(x))\|_1\Big\}\,ds_1ds_2\\
\lesssim\sum_{I\in \mathcal{I}^{(1)}_{-i}}\sum_{J\in \mathcal{I}^{(2)}_{-j}}&\dashint_{\tilde{I}}\dashint_{\tilde{J}} \Big\{\|T_{m_{s_1,s_2}}(f^i,g^j)(x)\|_2^{2}\\
+&\|T_{m_{s_1,s_2}}(f^i,g^j)(x)\|_2\| 2^{-j}\partial_{s_2}T_{m_{s_1,s_2}}(f^i,g^j)(x)\|_2\\
+&\|T_{m_{s_1,s_2}}(f^i,g^j)(x)\|_2\| 2^{-i}\partial_{s_1}T_{m_{s_1,s_2}}(f^i,g^j)(x)\|_2\\
+&\|T_{m_{s_1,s_2}}(f,g)(x)\|_2 \|2^{-i}2^{-j}\partial_{s_1,s_2}(T_{m_{s_1,s_2}}(f^i,g^j)(x))\|_2\Big\}\,ds_1ds_2.\\
    \end{split}
\end{equation*}

Observe that if $m$ is an $a$-admissible multiplier up to order 2, then 
\begin{equation*}
    \begin{split}
        \|T_{m_{s_1,s_2}}(f^i,g^j)(x)\|_2&\lesssim 2^{-a\max\{i,j\}}2^{\min\{i,j\}\frac{d}{2}}\|f\|_2\|g\|_2,\\
    \|\partial_{s_2}T_{m_{s_1,s_2}}(f^i,g^j)(x)\|_2&\lesssim  2^{j}2^{-a\max\{i,j\}}2^{\min\{i,j\}\frac{d}{2}}\|f\|_2\|g\|_2,\\
        \| \partial_{s_1}T_{m_{s_1,s_2}}(f^i,g^j)(x)\|_2&\lesssim  2^{i}2^{-a\max\{i,j\}}2^{\min\{i,j\}\frac{d}{2}}\|f\|_2\|g\|_2,\\
        \| \partial_{s_1,s_2}T_{m_{s_1,s_2}}(f^i,g^j)(x)\|_2&\lesssim 2^{i+j} 2^{-a\max\{i,j\}}2^{\min\{i,j\}\frac{d}{2}}\|f\|_2\|g\|_2,
       \\  
    \end{split}
\end{equation*}
and that is all one needs to finish up the proof.

\end{proof}

With the same argument as the one used in Subsection \ref{Eadmissiblemsubsection} for $\mathcal{A}_{m,E}$, the following corollaries follow from the decay bounds for the pieces of $\mathcal{A}_{m,E_1,E_2}^{(2)}$ given by Theorem \ref{thm:piecesbiparameter}.

\begin{cor}[Sobolev bounds for $\mathcal{A}^{(2)}_{m,E_1,E_2}$ ]\label{biparametersobolev} Let $d\geq 1$ and $E_i\subset[1,2]$ with upper Minkowski dimension $\text{dim}_{M}(E_i)=\beta_i$, $i=1,2$. Let $m$ be an $a$-admissible bilinear multiplier up to order $2$, and assume $2a>d+\beta_1+\beta_2$. Given $s_1,s_2\geq 0$, with $s_1+s_2<\frac{2a-d-\beta_1-\beta_2}{2}$, then
\begin{equation}
      \|\mathcal{A}^{(2)}_{m,E_1,E_2}(f,g)\|_{L^2}\lesssim \|f\|_{H^{-s_1}}\|g\|_{H^{-s_2}}.
\end{equation}
\end{cor}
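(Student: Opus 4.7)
\medskip

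\textbf{Proof proposal.} The plan is to mirror the proof of Theorem \ref{sobolevE} verbatim, using Theorem \ref{thm:piecesbiparameter} in place of Theorem \ref{piecesofA_E}; the key observation is that the decay estimate provided by Theorem \ref{thm:piecesbiparameter} has exactly the same shape as in the one-parameter case, but with $\beta$ replaced by $\beta_1+\beta_2$.

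First I would Littlewood--Paley decompose $f=\sum_{i\geq 0} f^i$ and $g=\sum_{j\geq 0} g^j$ (with $\hat{f^i}=\hat f\,\hat\psi(2^{-i}\cdot)$ for $i\geq 1$ and $\hat{f^0}=\hat f\,\hat\varphi$, analogously for $g^j$), and use the sublinearity of the supremum to write
\[
\|\mathcal{A}^{(2)}_{m,E_1,E_2}(f,g)\|_{L^2}\leq \sum_{i,j\geq 0}\|\mathcal{A}^{(2)}_{m,E_1,E_2}(f^i,g^j)\|_{L^2}.
\]
Theorem \ref{thm:piecesbiparameter} then yields, for any $\eps>0$,
\[
\|\mathcal{A}^{(2)}_{m,E_1,E_2}(f^i,g^j)\|_{L^2}\lesssim_\eps 2^{-\max\{i,j\}\frac{2a-d-\beta_1-\beta_2-\eps}{2}}\|f^i\|_{L^2}\|g^j\|_{L^2}.
\]

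Next I would split the double sum according to whether $i\geq j$ or $j>i$, obtaining two pieces $I+II$. For $I$ (the case $j\geq i$), factor out $2^{-js_2}$ from $\|g^j\|_{L^2}$ to recognize $\|g\|_{H^{-s_2}}$ via Cauchy--Schwarz in $j$: the geometric sum $\sum_{j\geq i} 2^{-j(2a-d-2s_2-\beta_1-\beta_2-\eps)}$ converges (since $2(s_1+s_2)<2a-d-\beta_1-\beta_2$, we can choose $\eps$ small enough so that even the exponent $2a-d-2s_2-\beta_1-\beta_2-\eps$ is positive) and produces a residual factor $2^{-i(2a-d-2s_2-\beta_1-\beta_2-\eps)/2}$. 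Absorbing $2^{-is_1}$ from $\|f^i\|_{L^2}$ and using Cauchy--Schwarz in $i$ (the remaining power $-(2a-d-2s_1-2s_2-\beta_1-\beta_2-\eps)/2$ is negative for $\eps$ sufficiently small thanks to $s_1+s_2<\frac{2a-d-\beta_1-\beta_2}{2}$) gives $I\lesssim \|f\|_{H^{-s_1}}\|g\|_{H^{-s_2}}$. The contribution $II$ is handled symmetrically by exchanging the roles of $i$ and $j$.

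I do not anticipate any real obstacle: the only subtlety is keeping $\eps$ small enough that the two geometric sums converge, which is ensured precisely by the strict inequality $s_1+s_2<\frac{2a-d-\beta_1-\beta_2}{2}$ in the hypothesis. Everything else is bookkeeping identical to the one-parameter argument of Theorem \ref{sobolevE}, so no new ideas beyond Theorem \ref{thm:piecesbiparameter} are required.
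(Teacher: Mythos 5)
Your proof is correct and follows precisely the route the paper intends: the paper itself states only that Corollary \ref{biparametersobolev} ``follows with the same argument as in Subsection \ref{Eadmissiblemsubsection},'' and you have carried out exactly that adaptation, replacing Theorem \ref{piecesofA_E} by Theorem \ref{thm:piecesbiparameter} and $\beta$ by $\beta_1+\beta_2$ throughout. (There is a small labeling slip where you first describe the split as $i\geq j$ versus $j>i$ but then treat $I$ as the case $j\geq i$; the detailed analysis that follows is internally consistent and matches the paper, so this is cosmetic.)
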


    \begin{cor}
        [Continuity estimates for $\mathcal{A}^{(2)}_{m,E_1,E_2}$]\label{continuitybiparameter} Let $d\geq 1$ and $E_i\subset [1,2]$ with upper Minkowski dimension $\text{dim}_{M}(E)=\beta_i$, $i=1,2$. Let $m$ be an $a$-admissible bilinear multiplier up to order $2$, and assume that $2a>d+\beta_1+\beta_2$. Then, there exists $\gamma>0$ such that
\begin{equation}
    \|\mathcal{A}_{m,E_1,E_2}^{(2)}(f-\tau_h f,g)\|_{L^2}+\|\mathcal{A}_{m,E_1,E_2}^{(2)}(f,g-\tau_h g)\|_{L^2}\lesssim |h|^{\gamma}\|f\|_{L^2}\|g\|_{L^2},\quad\forall |h|<1.
\end{equation}
Moreover, under the same hypothesis there exist $\gamma_1,\gamma_2>0$ such that
\begin{equation}
    \|\mathcal{A}_{m,E_1,E_2}^{(2)}(f-\tau_{h_1} f, g-\tau_{h_2}g)\|_{L^2}\lesssim |h_1|^{\gamma_1}|h_2|^{\gamma_2}\|f\|_{L^2} \|g\|_{L^2},\quad \forall |h_1|< 1, |h_2|<1.
\end{equation}
    \end{cor}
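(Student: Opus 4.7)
The plan is to deduce Corollary \ref{continuitybiparameter} from the Sobolev smoothing estimate in Corollary \ref{biparametersobolev} in direct analogy with how Corollary \ref{continuityA_E} was derived from Theorem \ref{sobolevE}. The biparameter nature of the operator has already been absorbed into the constant $\beta_1 + \beta_2$ appearing in the Sobolev bound, so no new difficulty arises at this step: the passage from Sobolev smoothing to continuity is purely a Fourier-side calculation.

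First, to handle the one-translation estimate $\|\mathcal{A}^{(2)}_{m,E_1,E_2}(f-\tau_h f, g)\|_{L^2}$, I would apply Corollary \ref{biparametersobolev} with $s_2 = 0$ and choose any $s_1 \in (0, \tfrac{2a-d-\beta_1-\beta_2}{2})$; this is possible precisely because of the hypothesis $2a > d + \beta_1 + \beta_2$. This gives
\[
\|\mathcal{A}^{(2)}_{m,E_1,E_2}(f-\tau_h f, g)\|_{L^2} \lesssim \|f - \tau_h f\|_{H^{-s_1}} \|g\|_{L^2} = \left\| \frac{1 - e^{-2\pi i h\cdot \xi}}{(1+|\xi|)^{s_1}} \hat{f}(\xi) \right\|_{L^2_{\xi}} \|g\|_{L^2}.
\]
The symbol multiplier is then bounded by splitting into $|\xi| \leq 1/|h|$, where $|1 - e^{-2\pi i h\cdot \xi}| \lesssim |h||\xi|$ gives a factor $(|h||\xi|)^{2\gamma}(1+|\xi|)^{-2s_1}$ with $\gamma = \min\{s_1, 1\}$, and $|\xi| > 1/|h|$, where the trivial bound $|1-e^{-2\pi i h \cdot \xi}| \leq 2$ combined with $(1+|\xi|)^{-2s_1} \leq |h|^{2s_1}$ takes over. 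This yields $|h|^{2\gamma}$ uniformly in $\xi$ for $|h| < 1$, producing the desired bound $|h|^{\gamma} \|f\|_{L^2} \|g\|_{L^2}$. The estimate for $\|\mathcal{A}^{(2)}_{m,E_1,E_2}(f, g - \tau_h g)\|_{L^2}$ is completely symmetric, applying Corollary \ref{biparametersobolev} with $s_1 = 0$ and $s_2 \in (0, \tfrac{2a-d-\beta_1-\beta_2}{2})$.

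For the two-translation bound $\|\mathcal{A}^{(2)}_{m,E_1,E_2}(f - \tau_{h_1}f, g - \tau_{h_2}g)\|_{L^2}$, there are two equivalent routes. The cleanest is to apply Corollary \ref{biparametersobolev} with both $s_1, s_2 > 0$ satisfying $s_1 + s_2 < \tfrac{2a-d-\beta_1-\beta_2}{2}$, so that
\[
\|\mathcal{A}^{(2)}_{m,E_1,E_2}(f - \tau_{h_1}f, g - \tau_{h_2}g)\|_{L^2} \lesssim \|f - \tau_{h_1}f\|_{H^{-s_1}} \|g - \tau_{h_2}g\|_{H^{-s_2}},
\]
and to apply the same Fourier-side bound separately in each factor, producing $|h_1|^{\gamma_1} |h_2|^{\gamma_2}$ with $\gamma_i = \min\{s_i,1\}$. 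Alternatively, one can multiply the two one-translation estimates $\|\mathcal{A}^{(2)}(f-\tau_{h_1}f, g-\tau_{h_2}g)\|_{L^2} \lesssim |h_i|^{\gamma_i}\|f\|_{L^2}\|g\|_{L^2}$ (each obtained by treating the other translated function as a generic $L^2$ input) and take the geometric mean, as was done in Corollary \ref{continuity222}.

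There is no serious obstacle here: the only quantitative input from the biparameter structure is the shift $\beta \mapsto \beta_1 + \beta_2$ in the admissibility threshold, which has already been dealt with in Theorem \ref{thm:piecesbiparameter} and Corollary \ref{biparametersobolev}. The hypothesis $2a > d + \beta_1 + \beta_2$ guarantees the existence of positive exponents $s_1, s_2$ satisfying the required inequality, and everything downstream is a routine Sobolev interpolation argument.
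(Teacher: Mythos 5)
Your proof is correct and matches the paper's own treatment: the paper explicitly states that Corollary \ref{continuitybiparameter} follows from Theorem \ref{thm:piecesbiparameter} (via Corollary \ref{biparametersobolev}) ``with the same argument as the one used in Subsection \ref{Eadmissiblemsubsection}'' for $\mathcal{A}_{m,E}$, which is precisely the Sobolev-to-continuity Fourier-side calculation you carry out. The choice $\gamma_i=\min\{s_i,1\}$ and the alternative geometric-mean route for the two-translation bound both mirror the paper's proofs of Corollaries \ref{continuity222} and \ref{continuityA_E}.
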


\section{Sparse bounds consequences for multi-scale bilinear maximal functions}\label{section:sparsebounds}

In this section, we consider a special class of multipliers $m(\xi, \eta)=\hat{\mu}(\xi,\eta)$, where $\mu$ is a compactly supported finite Borel measure on $\R^{2d}$ and assume that $(0,0)\notin \text{supp}(\mu)$, so that 
\begin{equation}\label{conditiononsuppofmu}
    \min\{|(y,z)|\colon (y,z)\in \text{supp}({\mu})\}>0.
\end{equation}

For a dilation set $E\subset [1,2]$, recall the definition of the multi-scale bilinear maximal operator associated to $\mathcal{A}_{\hat{\mu},E}$ given by
\begin{equation}
\begin{split}
     \mathcal{M}_{\hat{\mu},E}(f,g)(x)=&\sup_{l\in \Z}\sup_{t\in E}|T_{\hat{\mu}_{t2^l}}(f,g)(x)|\\
     =& \sup_{l\in \Z} \sup_{t\in E}\left|\int f(x-t2^{l} y)g(x-t2^{l}z)\,d\mu(y,z)\right|.
\end{split}
\end{equation}

In the particular case $E=\{1\}$ one gets the lacunary bilinear maximal operator associated to the bilinear multiplier $\hat{\mu}$ given by
\begin{equation}
\begin{split}
     \mathcal{M}_{\hat{\mu},lac}(f,g)(x)=&\sup_{l\in \Z} |T_{\hat{\mu}_{2^l}}(f,g)(x)|\\
     =& \sup_{l\in \Z} \left|\int f(x-2^{l} y)g(x-2^{l}z)\,d\mu(y,z)\right|.
\end{split}
\end{equation}

From the techniques in \cite{BFOPZ,sparsetriangle}, one can check the continuity estimates below are enough to get the sparse domination result claimed in Theorem \ref{sparseboundME}. The assumption that $m=\hat{\mu}$ for some compactly supported finite measure $\mu$ is not important for the continuity estimates below but it is crucial for the reduction to sparse bounds of dyadic maximal operators (see \cite[Section 4]{BFOPZ}). The assumption that $(0,0)\notin \text{supp}(\mu)$ implies condition (\ref{conditiononsuppofmu}), which we can use to get the analogue of \cite[Lemma 19]{BFOPZ} in our more general setting.

\begin{prop}\label{prop:generalconinuityAE}
    Let $E\subset[1,2]$ with upper Minkowski dimension $\beta$ and let $\mu$ be a compactly supported finite Borel measure in $\R^{2d}$. Assume that $\hat{\mu}$ is $a$-admissible with $2a>d+\beta$. Then the following continuity estimates hold for any point $(1/p,1/q,1/r)\in \text{int}(\mathcal{R}(\mu,E))$.\\
    There exists $\gamma>0$ such that
\begin{equation}
    \|\mathcal{A}_{\hat{\mu},E}(f-\tau_h f,g)\|_{L^r}+\|\mathcal{A}_{\hat{\mu},E}(f,g-\tau_h g)\|_{L^r}\lesssim |h|^{\gamma}\|f\|_{L^p}\|g\|_{L^q},\quad\forall\,|h|<1.
\end{equation}
Moreover, under the same hypothesis there exist $\gamma_1,\gamma_2>0$ such that
\begin{equation}
    \|\mathcal{A}_{\hat{\mu},E}(f-\tau_{h_1} f, g-\tau_{h_2}g)\|_{L^r}\lesssim |h_1|^{\gamma_1}|h_2|^{\gamma_2}\|f\|_{L^p} \|g\|_{L^q},\quad \forall\,|h_1|< 1,\,\forall\, |h_2|<1.
\end{equation}
\end{prop}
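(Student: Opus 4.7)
\emph{Proof plan.} The strategy is to bootstrap the $L^2\times L^2\to L^2$ continuity estimate of Corollary~\ref{continuityA_E} to general interior points of $\mathcal{R}(\mu,E)$ via bilinear interpolation against a trivial (uniform in $h$) $L^{p_0}\times L^{q_0}\to L^{r_0}$ bound coming from another point of $\mathcal{R}(\mu,E)$.

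The first step is linearization. For fixed $f,g,h$ choose a measurable selector $t^\ast\colon\R^d\to E$ so that $\mathcal{A}_{\hat{\mu},E}(f-\tau_h f,g)(x)=|T_{\hat{\mu}_{t^\ast(x)}}(f-\tau_h f,g)(x)|$ pointwise; this exists since $E$ is separable and $t\mapsto T_{\hat{\mu}_t}(F,G)(x)$ is continuous. The bilinear operator $L^{t^\ast}(F,G)(x):=T_{\hat{\mu}_{t^\ast(x)}}(F,G)(x)$ then satisfies $|L^{t^\ast}(F,G)|\le \mathcal{A}_{\hat{\mu},E}(F,G)$ for all $F,G$, so the bilinear operator $\tilde{L}^{t^\ast}(F,G):=L^{t^\ast}(F-\tau_h F,G)$ inherits two uniform (in $t^\ast$) endpoint bounds. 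At $(2,2,2)$, Corollary~\ref{continuityA_E} (whose hypothesis $2a>d+\beta$ is exactly our assumption) gives $\|\tilde{L}^{t^\ast}(F,G)\|_{L^2}\lesssim |h|^{\gamma_0}\|F\|_{L^2}\|G\|_{L^2}$ for some $\gamma_0>0$. At any chosen $(1/p_0,1/q_0,1/r_0)\in\mathcal{R}(\mu,E)$, translation invariance of Lebesgue norms yields $\|\tilde{L}^{t^\ast}(F,G)\|_{L^{r_0}}\lesssim \|F\|_{L^{p_0}}\|G\|_{L^{q_0}}$, uniformly in $h$ as well.

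By Theorem~\ref{sobolevE}, $(1/2,1/2,1/2)\in\mathcal{R}(\mu,E)$, and since $\mathcal{R}(\mu,E)$ is convex (by multilinear Riesz--Thorin) and $(1/p,1/q,1/r)$ lies in the interior, one can pick $(1/p_0,1/q_0,1/r_0)\in\mathcal{R}(\mu,E)$ and $\theta\in(0,1)$ with
\[
(1/p,1/q,1/r)=\theta(1/2,1/2,1/2)+(1-\theta)(1/p_0,1/q_0,1/r_0).
\]
Bilinear Riesz--Thorin applied to $\tilde{L}^{t^\ast}$ between these two endpoints then gives $\|\tilde{L}^{t^\ast}(F,G)\|_{L^r}\lesssim |h|^{\theta\gamma_0}\|F\|_{L^p}\|G\|_{L^q}$, with constant independent of $t^\ast$. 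Evaluating at $(F,G)=(f,g)$, and using $|\tilde{L}^{t^\ast}(f,g)|=\mathcal{A}_{\hat{\mu},E}(f-\tau_h f,g)$ by the choice of $t^\ast$, gives the first claimed estimate with $\gamma:=\theta\gamma_0$. The bound with translation in the second input is symmetric, and the joint two-input estimate follows by the same interpolation applied to the two-input continuity statement of Corollary~\ref{continuityA_E}, producing the product $|h_1|^{\theta\gamma_1}|h_2|^{\theta\gamma_2}$.

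The main technical concern is justifying an interpolation step for a sublinear maximal operator; the linearization above bypasses it by producing a family of genuinely bilinear operators $\{\tilde{L}^{t^\ast}\}$ whose endpoint bounds are uniform in the selector, so that classical bilinear Riesz--Thorin applies without issue. Apart from this, the remaining steps are routine.
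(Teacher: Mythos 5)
Your proposal is correct and follows essentially the same route as the paper: interpolate the $L^2\times L^2\to L^2$ continuity estimate of Corollary~\ref{continuityA_E} against a trivial (no $|h|$-gain) bound at another point of $\mathcal{R}(\mu,E)$. The paper simply says ``by multilinear interpolation'' and leaves it at that; your linearization through measurable selectors $t^\ast$ is the standard way to make that step rigorous, since $\mathcal{A}_{\hat{\mu},E}$ is only bisublinear and Riesz--Thorin applies to genuinely bilinear operators. The one place to tread carefully is the existence of a measurable selector achieving the supremum: if $E$ is not closed the supremum over $t\in E$ need not be attained, so one should either replace $E$ by its closure (which has the same upper Minkowski dimension and the same maximal operator) or use approximate selectors over a countable dense subset of $E$; either fix is routine, and your appeal to separability and continuity of $t\mapsto T_{\hat{\mu}_t}(f,g)(x)$ for $f,g\in C_0^\infty$ points in the right direction. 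Apart from that minor point, the argument is complete; the paper's version is just terser.
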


\begin{proof}
    We already know by Corollary \ref{continuityA_E} that $\mathcal{A}_{\hat{\mu},E}$ satisfy continuity estimates at $L^{2}\times L^{2}\rightarrow L^2$ if $2a>d+\beta$. For any $(1/p_0,1/q_0,1/r_0)\in \mathcal{R}(\mu,E)$, we have
    $$\|\mathcal{A}_{\hat{\mu},E}(f-\tau_h f,g)\|_{L^{r_0}}+\|\mathcal{A}_{\hat{\mu},E}(f,g-\tau_h g)\|_{L^r}\lesssim\|f\|_{L^{p_0}}\|g\|_{L^{q_0}},\quad \forall\,|h|<1$$
    and 
    \begin{equation*}
    \|\mathcal{A}_{\hat{\mu},E}(f-\tau_{h_1} f, g-\tau_{h_2}g)\|_{L^{r_0}}\lesssim \|f\|_{L^{p_0}} \|g\|_{L^{q_0}},\quad \forall\,|h_1|< 1,\,\forall\, |h_2|<1.
\end{equation*}
    By multilinear interpolation, one immediately gets continuity estimates for $\mathcal{A}_{\hat{\mu},E}$ at $L^{p}\times L^{q}\rightarrow L^{r}$ for any triple $(\frac{1}{p},\frac{1}{q},\frac{1}{r})$ in the interior of the region $\mathcal{R}(\mu,E)$.
\end{proof}

\begin{rem}\label{nicepropertiesforregion} Let $E\subset[1,2]$ with upper Minkowski dimension $\beta$ and let $\mu$ be a compactly supported finite Borel measure in $\R^{2d}$. Assume that for all $1<p<\infty$,
\begin{equation}\label{firstpropertyregion}
\mathcal{A}_{\hat{\mu},E}:L^{p}\times L^{q}\rightarrow L^{p} 
\end{equation}
for all sufficiently large $q$ and symmetrically, for all $1<q<\infty$
\begin{equation}\label{secondpropertyregion}
\mathcal{A}_{\hat{\mu},E}:L^{p}\times L^{q}\rightarrow L^{q}
\end{equation}
for all sufficiently large $p$.
Then as shown in \cite{BFOPZ} the restriction $p,q\leq r$ in the Theorem \ref{sparseboundME} can be removed. This is the case for $\mu=\sigma_{2d-1}$ for example, as one can check from the description of the boundedness region of $\tilde{\mathcal{M}}=\mathcal{A}_{\hat{\sigma},[1,2]}$, initiated in \cite{JL} and sharpened in \cite{bhojak2023sharp}. 
\end{rem}

Let $\mathcal{T}_E$ and $\mathcal{T}_{E}^{*}$ be the single-scale and multi-scale triangle averaging maximal operators whose definition is given in (\ref{TEdefinition}) and (\ref{multiscaleTEdef}) respectively. Also, let $\mathcal{R}(\mathcal{T}_E)$ be the $L^{p}$ improving boundedness region of $\mathcal{T}_{E}$, namely  
\begin{equation}\label{regionTE}
\begin{split}
    \mathcal{R}(\mathcal{T}_E):=\{(1/p,1/q,1/r)&\colon 1\leq p,q\leq \infty, r>0,\,  1/p+1/q\geq 1/r \\
    &\text{ and }\mathcal{T}_{E}:L^{p}\rightarrow L^q\rightarrow L^r\text{ is bounded}\}.
\end{split}  
\end{equation}

As in the proof of Proposition \ref{prop:generalconinuityAE}, by interpolating the continuity estimates for $\mathcal{T}(E)$ at $L^{2}\times L^{2}\rightarrow L^2$ given by Corollary \ref{continuityTE} with its $L^{p}$ improving bounds, we get the following proposition. 

\begin{prop}
    Let $E\subset[1,2]$ with upper Minkowski dimension $\beta$ and let $d>4+\beta$. Then the following continuity estimates hold for any point $(1/p,1/q,1/r)\in \text{int}(\mathcal{R}(\mathcal{T}_{E}))$.\\
    There exists $\gamma>0$ such that
\begin{equation}
    \|\mathcal{T}_{E}(f-\tau_h f,g)\|_{L^r}+\|\mathcal{T}_{E}(f,g-\tau_h g)\|_{L^r}\lesssim |h|^{\gamma}\|f\|_{L^p}\|g\|_{L^q},\quad\forall\,|h|<1.
\end{equation}
Moreover, under the same hypothesis there exist $\gamma_1,\gamma_2>0$ such that
\begin{equation}
    \|\mathcal{T}_{E}(f-\tau_{h_1} f, g-\tau_{h_2}g)\|_{L^r}\lesssim |h_1|^{\gamma_1}|h_2|^{\gamma_2}\|f\|_{L^p} \|g\|_{L^q},\quad \forall\,|h_1|< 1,\,\forall\, |h_2|<1.
\end{equation}
\end{prop}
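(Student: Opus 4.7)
The proof mirrors that of Proposition \ref{prop:generalconinuityAE}, combining the $(2,2,2)$ continuity estimates from Corollary \ref{continuityTE} with the uniform $L^p$ improving bounds available by definition of $\mathcal{R}(\mathcal{T}_E)$, via multilinear interpolation. First, since the hypothesis $d>4+\beta$ is precisely what Corollary \ref{continuityTE} requires, I get the baseline continuity estimates at the central exponent $(2,2,2)$: there exists $\gamma_0>0$ with
\[
\|\mathcal{T}_E(f-\tau_h f,g)\|_{L^2}+\|\mathcal{T}_E(f,g-\tau_h g)\|_{L^2}\lesssim |h|^{\gamma_0}\|f\|_{L^2}\|g\|_{L^2},\quad |h|<1,
\]
and analogously, with rates $\gamma_{0,1},\gamma_{0,2}>0$, for translations in both arguments simultaneously. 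In particular, choosing $s_1=s_2=0$ in Theorem \ref{sobolevtrianglethm} also shows that $(1/2,1/2,1/2)\in\mathcal{R}(\mathcal{T}_E)$.

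Next, for any $(1/p_0,1/q_0,1/r_0)\in\mathcal{R}(\mathcal{T}_E)$, translation invariance of $L^{p_0}$ yields the trivial uniform-in-$h$ estimate
\[
\|\mathcal{T}_E(f-\tau_h f,g)\|_{L^{r_0}}\lesssim \|f-\tau_h f\|_{L^{p_0}}\|g\|_{L^{q_0}}\lesssim \|f\|_{L^{p_0}}\|g\|_{L^{q_0}},
\]
and similarly for the other two variants. Given a target point $(1/p,1/q,1/r)\in\mathrm{int}(\mathcal{R}(\mathcal{T}_E))$, I would extend the ray from $(1/2,1/2,1/2)$ through this point slightly past it; interiority guarantees this produces a point $(1/p_0,1/q_0,1/r_0)\in \mathcal{R}(\mathcal{T}_E)$ and some $\theta\in(0,1)$ with
\[
\left(\tfrac1p,\tfrac1q,\tfrac1r\right)=\theta\left(\tfrac12,\tfrac12,\tfrac12\right)+(1-\theta)\left(\tfrac1{p_0},\tfrac1{q_0},\tfrac1{r_0}\right).
\]

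Applying bisublinear real interpolation to the bisublinear operator $B_h\colon(f,g)\mapsto \mathcal{T}_E(f-\tau_h f,g)$ between its decaying $L^2\times L^2\to L^2$ bound and its uniform $L^{p_0}\times L^{q_0}\to L^{r_0}$ bound then gives
\[
\|\mathcal{T}_E(f-\tau_h f,g)\|_{L^r}\lesssim |h|^{\theta\gamma_0}\|f\|_{L^p}\|g\|_{L^q},
\]
which is the first assertion with $\gamma:=\theta\gamma_0>0$. The continuity in $g$ and the two-input continuity estimate follow identically, using the corresponding endpoint bounds from Corollary \ref{continuityTE} (with the exponents $\theta\gamma_{0,1},\theta\gamma_{0,2}$ in the latter case).

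The argument is essentially routine given its two inputs. The only mildly subtle point — the same one handled implicitly in the proof of Proposition \ref{prop:generalconinuityAE} — is the bisublinear interpolation of a parameter-dependent family of operators with one endpoint norm shrinking as $|h|\to 0$ and the other uniform in $h$; this is a standard real-interpolation argument, but must be invoked with the convex-combination representation chosen above to ensure the interpolated exponent $\theta\gamma_0$ remains strictly positive. Beyond that, no new obstacle is expected.
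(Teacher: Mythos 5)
Your proof is correct and follows exactly the same strategy the paper uses: obtain the $(2,2,2)$ continuity estimate from Corollary \ref{continuityTE}, pair it with the trivially uniform-in-$h$ bound at points of $\mathcal{R}(\mathcal{T}_E)$, and interpolate. You have simply spelled out the convex-combination / ray-extension geometry that the paper leaves implicit in its one-line appeal to multilinear interpolation, so there is no substantive difference in approach.
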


Again by the arguments in \cite{sparsetriangle} and \cite{BFOPZ} we have the following corollary.

\begin{cor}[Sparse bounds for $\mathcal{T}_{E}^{*}$]\label{sparseboundTE}
Let $E\subset[1,2]$ with upper Minkowski dimension $\beta$, and let $d> 4+\beta$. Then, for any $(1/p,1/q,1/r)\in\text{int}(\mathcal{R}(\mathcal{T}_E))$ with $r>1$ and $p,q\leq r$, one has sparse domination for $\mathcal{T}^{*}_{E}$ with parameters $(p,q,r')$. That is, for any $f,g,h\in C^{\infty}_{0}(\R^d)$, there exists a sparse family $\mathcal{S}$ such that 
\begin{equation}
    \begin{split}
|\langle\mathcal{T}^{*}_{E}(f,g),h\rangle|\lesssim \sum_{Q\in \mathcal{S}} \langle f \rangle_{Q,p} \langle g\rangle_{Q,q}  \langle h\rangle_{Q,r'} |Q|.
    \end{split}
\end{equation}
\end{cor}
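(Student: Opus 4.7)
The plan is to apply the machinery developed in \cite{BFOPZ, sparsetriangle}, which turns $L^p$-improving bounds together with continuity estimates for a single-scale bilinear operator into a sparse bound for its associated multi-scale maximal operator. Since the measure $\mu$ on $\mathcal{I}$ is compactly supported and satisfies $(0,0)\notin \supp(\mu)$, the operator $\mathcal{T}^*_E$ fits directly into the framework of \cite[Section 4]{BFOPZ}; in particular, for each dyadic scale $2^\ell$ and each $t\in E\subset[1,2]$, the operator $\mathcal{T}_{t2^\ell}(f,g)(x)$ depends only on the values of $f, g$ on a bounded dyadic neighborhood of $x$ at scale $2^\ell$.

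First I would perform the standard three-grid reduction, dominating $\mathcal{T}^*_E$ pointwise by a sum of finitely many dyadic maximal analogues of the form
\[
\mathcal{T}^{*,\mathcal{D}}_E(f,g)(x)=\sup_{\substack{Q\ni x\\ Q\in\mathcal{D}}} \sup_{t\in E}\big|\mathcal{T}_{t\ell(Q)}(f\ind_{CQ},g\ind_{CQ})(x)\big|,
\]
for a large enough dilation $C$ depending on $\supp(\mu)$. Then, for a fixed cube $Q_0$ and functions $f,g,h$ with $\supp f,\supp g,\supp h\subset Q_0$, I would run the standard recursive stopping-time argument from \cite[Section 4]{BFOPZ}: select a maximal family of dyadic subcubes $\{Q_j\}$ where at least one of $M_p f/\langle f\rangle_{Q_0,p}$, $M_q g/\langle g\rangle_{Q_0,q}$, or $M_{r'}h/\langle h\rangle_{Q_0,r'}$ is large, iterate on each $Q_j$, and split the contribution of $\langle \mathcal{T}^{*,\mathcal{D}}_E(f,g),h\rangle$ on $Q_0\setminus \bigcup Q_j$ into the sparse term and an error to be iterated.

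The key analytic inputs in this recursion are exactly the two provided above: the $L^p\times L^q\to L^r$ boundedness of $\mathcal{T}_E$ (true by hypothesis on $(1/p,1/q,1/r)\in \mathrm{int}(\mathcal{R}(\mathcal{T}_E))$), and the associated continuity estimates at the same exponent, furnished by the previous proposition (applicable since $d>4+\beta$). The boundedness controls the diagonal stopping-cube contribution, while the continuity estimate, applied to differences $\mathcal{T}_{t2^\ell}(f,g)-\mathcal{T}_{t2^\ell}(f\ind_{CQ},g\ind_{CQ})$, produces a gain $|h|^{\gamma_1}|h_2|^{\gamma_2}$ that telescopes into a geometric series over the dyadic parameter $\ell$, yielding convergence of the iteration. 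The restrictions $r>1$ and $p,q\le r$ are used in the same places as in \cite{BFOPZ}: $r>1$ for the use of H\"older when extracting the $\langle h\rangle_{Q,r'}$ factor, and $p,q\le r$ to ensure the scale-by-scale sums are summable.

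The main obstacle is verifying that the hypotheses of the black-box sparse-domination principle of \cite{BFOPZ, sparsetriangle} apply in the triangle setting, rather than producing a new argument. Two potential subtleties must be checked: (i) the analogue of \cite[Lemma 19]{BFOPZ}, which relies on $\mu$ being compactly supported away from the origin, holds for the triangle measure since $\mathcal{I}$ is compact with $\inf_{(y,z)\in \mathcal{I}}|(y,z)|=\sqrt{2}>0$; and (ii) the continuity estimates must be available at the exponent $(p,q,r)$ with constants controlled. Both are already in hand, so the proof reduces to invoking the machinery. Once these are verified, the sparse bound with parameters $(p,q,r')$ follows in exactly the same fashion as the corresponding sparse bound for $\mathcal{M}_{\hat\mu,E}$ in Corollary \ref{sparseboundME}.
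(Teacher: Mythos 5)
Your proposal takes essentially the same route as the paper: the paper's own proof is just the one-line remark that the result follows "by the arguments in [sparsetriangle] and [BFOPZ]" once the continuity estimates of the preceding proposition are in hand, and you are sketching exactly that invocation of the black-box machinery. Your verification that the triangle measure satisfies the required support condition (since $\inf_{(y,z)\in\mathcal{I}}|(y,z)|=\sqrt{2}>0$) and that the continuity estimates at exponent $(p,q,r)$ come from the preceding proposition under $d>4+\beta$ correctly identifies the two things that need checking, so the argument matches the paper's in substance, just spelled out in more detail.
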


\begin{rem}\label{rmk: PS}
    Observe that in the corollary above we only claim sparse bounds for sufficiently large dimensions $d\geq 5$ in the case $\beta\in [0,1)$ and $d\geq 6$ in the case $\beta=1$, due to the constraints in our continuity estimates Corollary \ref{continuityTE}. Although a version of \cite{sparsetriangle} claimed continuity estimates and sparse bounds in all dimensions $d\ge 2$ for the case $E=\{1\}$ and $E=[1,2]$, there is unfortunately an error in the proof (more precisely in the proof of their continuity estimates). In personal communication with the authors of \cite{sparsetriangle}, we have learned that an upcoming amended version of their paper will prove continuity estimates, hence sparse bounds, for the triangle averaging operator in the case $E=\{1\}$ and $E=[1,2]$ in high enough dimensions, in contrast to all dimensions $d\geq 2$ as originally claimed. Our result improves on their new dimensional restrictions by lowering the threshold to $d\ge 5$ in the lacunary case $E=\{1\}$ and $d\geq 6$ in the full case $E=[1,2]$, and is new in other cases of $E$. In dimensions $d=2, 3, 4$, continuity estimates, and in turn sparse domination for the maximal triangle operator, remain open questions.
\end{rem}

Lastly, we can also define the $L^{p}$ improving region associated to the biparameter-like bilinear maximal operator $\mathcal{A}^{(2)}_{\hat{\mu},E_1,E_2}$,
\begin{equation}
\begin{split}
    \mathcal{R}^{(2)}({\mu},E_1,E_2)
    =\{(1/p,1/q,1/r)&\colon 1\leq p,q\leq \infty, r>0,\,1/p+1/q\geq 1/r\\
    \text{ and }&\mathcal{A}^{(2)}_{\hat{\mu},E_1,E_2}:L^{p}\times L^{q}\rightarrow L^r \text{ is bounded}\}
\end{split}
\end{equation}

Recall that in Corollary \ref{continuitybiparameter} we proved continuity estimates for the biparameter like single-scale operator $\mathcal{A}^{(2)}_{\hat{\mu},E_1,E_2}$ at $L^{2}\times L^{2}\rightarrow L^{2}$. By multilinear interpolation continuity estimates hold at $L^{p}\times L^{q}\rightarrow L^r$ for any point $(1/p,1/q,1/r)\in \text{int}(\mathcal{R}^{(2)}({\mu},E_1,E_2))$ and the following sparse bounds corollary follows.

\begin{cor}[Sparse bounds for $\mathcal{M}^{(2)}_{\hat{\mu},E_1,E_2}$]\label{cor:sparseforbiparameter}  Let $E_1,E_2\subset[1,2]$ with upper Minkowski dimensions $\beta_1$ and $\beta_2$ respectively. Suppose $m$ is an $a$-admissible with $2a>d+\beta_1+\beta_2$.
 Then, for any $(1/p,1/q,1/r)\in \text{int}(\mathcal{R}^{(2)}(\hat{\mu},E_1,E_2))$ with $r>1$ and $p,q\leq r$, one has sparse domination for $\mathcal{M}^{(2)}_{\hat{\mu},E_1,E_2}$ with parameters $(p,q,r')$. Namely, for any $f,g,h\in C^{\infty}_{0}(\R^d)$, there exists a sparse family $\mathcal{S}$ such that 
\begin{equation}
    \begin{split}
|\langle\mathcal{M}^{(2)}_{\hat{\mu},E_1,E_2}(f,g),h\rangle|\lesssim \sum_{Q\in \mathcal{S}} \langle f \rangle_{Q,p} \langle g\rangle_{Q,q}  \langle h\rangle_{Q,r'} |Q|.
    \end{split}
\end{equation}
\end{cor}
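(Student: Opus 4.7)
The plan mirrors the one-parameter argument that produced Corollary~\ref{sparseboundME}, with the key conceptual observation (already flagged in the introduction) that although $\mathcal{A}^{(2)}_{\hat{\mu},E_1,E_2}$ is genuinely biparameter in the localized dilation variables $(t_1,t_2)\in E_1\times E_2$, the \emph{multi-scale} operator $\mathcal{M}^{(2)}_{\hat{\mu},E_1,E_2}$ only varies one outer dyadic scale $2^l$ common to both parameters. Consequently the sparse domination framework from \cite{BFOPZ, sparsetriangle}, developed in the one-parameter setting, applies essentially verbatim; the biparameter nature only enters through the single-scale estimates that feed the framework.

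First I would upgrade the $L^2\times L^2\to L^2$ continuity estimates of Corollary~\ref{continuitybiparameter} to continuity estimates at an arbitrary interior point of $\mathcal{R}^{(2)}(\hat{\mu},E_1,E_2)$. For any fixed $(1/p_0,1/q_0,1/r_0)\in \mathcal{R}^{(2)}(\hat{\mu},E_1,E_2)$, the trivial bound $\|\mathcal{A}^{(2)}_{\hat{\mu},E_1,E_2}(f-\tau_h f,g)\|_{L^{r_0}}\lesssim \|f\|_{L^{p_0}}\|g\|_{L^{q_0}}$ holds uniformly in $h$, and analogously for translations in $g$ or in both arguments. Combining this with the Sobolev-type estimate from Corollary~\ref{continuitybiparameter} via multilinear (complex) interpolation, one picks up a small power $|h|^{\gamma}$ (resp.\ $|h_1|^{\gamma_1}|h_2|^{\gamma_2}$) at every interior point $(1/p,1/q,1/r)$. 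This is the exact analogue of Proposition~\ref{prop:generalconinuityAE}.

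Next I would feed these continuity estimates into the bilinear adaptation of Lacey's machinery (\cite{BFOPZ, sparsetriangle}). The standing assumption that $\mu$ is a compactly supported finite Borel measure, together with $(0,0)\notin\supp\mu$, supplies the geometric input needed for the analogue of \cite[Lemma~19]{BFOPZ} which localizes the averages to a region of controlled size relative to the ambient cube; this is where the hypothesis $m=\hat{\mu}$ is essential (whereas the continuity estimates only needed admissible decay). The usual pigeonholing and stopping-time construction then produces, for any $f,g,h\in C^{\infty}_0(\R^d)$, a sparse family $\mathcal{S}$ such that
\begin{equation*}
|\langle \mathcal{M}^{(2)}_{\hat{\mu},E_1,E_2}(f,g),h\rangle|\lesssim \sum_{Q\in\mathcal{S}} \langle f\rangle_{Q,p}\langle g\rangle_{Q,q}\langle h\rangle_{Q,r'} |Q|,
\end{equation*}
provided $r>1$ and $p,q\le r$. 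The restriction $r>1$ enters when one dualizes and uses a maximal truncation to carry out the stopping-time argument on the $h$ side, and $p,q\le r$ guarantees that one can choose a slight perturbation of exponents inside $\mathcal{R}^{(2)}(\hat{\mu},E_1,E_2)$ to run the argument.

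The main obstacle to anticipate is ensuring that the covering/stopping-time step, which in \cite{BFOPZ} is written for a single dilation $t\in E$, accommodates the two parameters $(t_1,t_2)\in E_1\times E_2$. The resolution is that at each dyadic level $l$ one replaces the single-scale operator $\mathcal{A}_{\hat{\mu},E}$ by $\mathcal{A}^{(2)}_{\hat{\mu},E_1,E_2}$, which is still supported at spatial scale $\sim 2^l$ (since $t_1,t_2\sim 1$); the supremum over $E_1\times E_2$ is absorbed into this single-scale piece and treated as a black box through the continuity estimates from the previous step. Thus the outer scale parameter $l$ is the only genuinely multi-scale variable, the sparse family is indexed by dyadic cubes just as in the one-parameter case, and no multiparameter sparse theory (which, as noted in the introduction, would fail in general) is actually required.
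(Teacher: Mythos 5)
Your proposal is correct and follows essentially the same route as the paper: interpolate the $L^2\times L^2\to L^2$ continuity estimate of Corollary~\ref{continuitybiparameter} against the known boundedness on $\mathcal{R}^{(2)}(\hat{\mu},E_1,E_2)$ to get continuity at every interior triple, then invoke the one-parameter sparse machinery of \cite{BFOPZ,sparsetriangle}, exactly because the outer dyadic scale $2^l$ is the only genuinely multi-scale variable while the $(t_1,t_2)$-supremum is absorbed into the single-scale piece. The paper states this very tersely; your write-up simply makes explicit the same observations, including the role of $(0,0)\notin\supp\mu$ in the analogue of \cite[Lemma 19]{BFOPZ} and where the restrictions $r>1$, $p,q\le r$ enter.
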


\section{Some Lebesgue  bounds for \texorpdfstring{$\mathcal{A}_{\hat{\mu},E}$}{Lg}, \texorpdfstring{$\mathcal{M}_{\hat{\mu},E}$}{Lg} and their biparameter variants}\label{sec:somelebesguebounds}

As in the previous section we will restrict ourselves to the case $m(\xi,\eta)=\hat{\mu}(\xi,\eta)$ where $\mu$ is a compactly supported finite measure in $\R^{2d}$. Throughout the section we assume $\mu$ is $a$-admissible and $E\subset [1,2]$ is a dilation set with upper Minkowski dimension $\beta$.

\subsection{H\"older bounds for \texorpdfstring{$\mathcal{M}_{\hat{\mu},E}$}{Lg} in the Banach case.}

In this subsection we prove some H\"older bounds $L^{p}\times L^{q}\rightarrow L^{r}$ with $1/p+1/q=1/r$ and $1<p,q,r\leq \infty$. We do so by making use of the known bounds for linear versions of such maximal operators. The proof is inspired from that of Proposition 3 in \cite{BGHHO}, where they deal with the case $E=[1,2]$.

\begin{prop}[H\"older Banach bounds for $\mathcal{M}_{\hat{\mu},E}$ and its biparameter variants] \label{banachholderbounds}If $2a\geq d+\beta$, then
\begin{equation}
\mathcal{M}_{\hat{\mu},E}:\,L^{p}\times L^{q}\rightarrow L^{r}
\end{equation}
for all $1<p,q\leq \infty$ and $1/r=1/p+1/q<1$.
Moreover, if $2a\ge d+\max\{\beta_1,\beta_2\}$ then the same H\"older Banach bounds hold true for the biparameter variants $\mathcal{M}^{(2)}_{\hat\mu, E_1,E_2}$ and $\mathcal{M}^{bip}_{\hat\mu, E_1, E_2}$. 
\end{prop}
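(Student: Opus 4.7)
The plan is to dominate $\mathcal{M}_{\hat{\mu},E}(f,g)$ pointwise by a product of \emph{linear} multi-scale maximal operators applied to the marginals of $\mu$. Assume $\mu\ge 0$ for simplicity. For any H\"older pair $P,Q\in(1,\infty)$ with $1/P+1/Q=1$, H\"older's inequality in the integral against $\mu$ yields
\begin{equation*}
|T_{\hat{\mu}_t}(f,g)(x)|\le [U^{\nu_1}_t(|f|^P)(x)]^{1/P}[U^{\nu_2}_t(|g|^Q)(x)]^{1/Q},
\end{equation*}
where $\nu_i:=(\pi_i)_*\mu$ is the $i$-th marginal (a compactly supported finite positive measure on $\R^d$) and $U^\nu_s h(x):=\int h(x-sy)\,d\nu(y)$. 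Taking the supremum over $l\in\Z$ and $t\in E$ and using $\sup(FG)\le(\sup F)(\sup G)$ gives
\begin{equation*}
\mathcal{M}_{\hat{\mu},E}(f,g)(x)\le [\mathcal{N}^{\nu_1}_E(|f|^P)(x)]^{1/P}[\mathcal{N}^{\nu_2}_E(|g|^Q)(x)]^{1/Q},
\end{equation*}
where $\mathcal{N}^{\nu}_E h(x):=\sup_{l\in\Z,\,t\in E}|U^\nu_{2^l t}h(x)|$ is the corresponding linear multi-scale maximal operator.

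Given $p,q>1$ with $1/p+1/q<1$, the Banach H\"older constraint $p>q'$ lets me choose $P\in(q',p)$, which forces $Q=P/(P-1)\in(p',q)$ and makes both $p/P>1$ and $q/Q>1$. Applying H\"older's inequality in $L^r$,
\begin{equation*}
\|\mathcal{M}_{\hat{\mu},E}(f,g)\|_{L^r}\le \|\mathcal{N}^{\nu_1}_E(|f|^P)\|_{L^{p/P}}^{1/P}\|\mathcal{N}^{\nu_2}_E(|g|^Q)\|_{L^{q/Q}}^{1/Q},
\end{equation*}
so the proof reduces to showing that $\mathcal{N}^{\nu_i}_E:L^s\to L^s$ boundedly for every $s>1$: if so, the first factor is $\lesssim \||f|^P\|_{L^{p/P}}^{1/P}=\|f\|_{L^p}$ and similarly for $g$.

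For this linear step the Fourier decay is crucial: since $\hat{\nu_i}(\xi)=\hat{\mu}(\xi,0)$ (or $\hat{\mu}(0,\xi)$), we have $|\hat{\nu_i}(\xi)|\lesssim(1+|\xi|)^{-a}$. One obtains $\mathcal{N}^{\nu_i}_E:L^2\to L^2$ from the linear analog of the dyadic argument behind Theorem \ref{piecesofA_E} (which needs only $2a>\beta$, automatic under our hypothesis since $2a\ge d+\beta>\beta$); the trivial $L^\infty$ bound from $\|\nu_i\|<\infty$ then gives $L^s$ for $s\ge 2$ by interpolation, and the range $1<s<2$ follows from the fractal extensions of Rubio de Francia's theory \cite{rubiodefrancia, DuoanVargas}, since the strong decay $2a\ge d+\beta$ is exactly what places $\hat{\nu_i}$ in the range where a Fefferman--Stein plus Littlewood--Paley square-function argument controls each dyadic piece by the Hardy--Littlewood maximal function. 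For the biparameter variants $\mathcal{M}^{(2)}_{\hat{\mu},E_1,E_2}$ and $\mathcal{M}^{bip}_{\hat{\mu},E_1,E_2}$, the same pointwise domination applies: the first factor depends only on $t_1\in E_1$ and the second only on $t_2\in E_2$, so the hypothesis naturally weakens to $2a\ge d+\max\{\beta_1,\beta_2\}$, matching the statement. The main obstacle is the linear $L^s$ bound for $s$ close to $1$, which is exactly where the stronger admissibility $2a\ge d+\beta$, rather than the usual $a>d/2$, is essential.
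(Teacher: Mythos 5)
Your argument is correct, and it reaches the same endpoint as the paper by a genuinely different (though closely related) route. The paper first proves the extreme cases $L^p\times L^\infty\to L^p$ and $L^\infty\times L^q\to L^q$ by freezing one input in $L^\infty$ so that $\mathcal{M}_{\hat\mu,E}(f,g)\le \|g\|_\infty M^0_E(|f|)$, where $M^0_E$ is the linear multi-scale maximal operator with multiplier $\hat\mu(\xi,0)$; it cites \cite[Theorem B]{DuoanVargas} for $M^0_E:L^p\to L^p$ and then interpolates to fill in the interior. You instead apply H\"older's inequality \emph{inside} the $d\mu$ integral with conjugate exponents $P,Q$, which produces the pointwise product domination by $[\mathcal{N}^{\nu_1}_E(|f|^P)]^{1/P}[\mathcal{N}^{\nu_2}_E(|g|^Q)]^{1/Q}$, where $\nu_1,\nu_2$ are the marginals of $\mu$; a single H\"older step in $L^r$ then gives every interior point directly, with the flexibility in $P$ replacing the interpolation. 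Both routes end up invoking exactly the same linear ingredient — boundedness of $\sup_{l,t\in E}|U^\nu_{2^lt}|$ on $L^s$ for all $1<s<\infty$ under $2a\ge d+\beta$ — since $\hat\nu_i(\xi)=\hat\mu(\xi,0)$ or $\hat\mu(0,\xi)$ carries the same decay, and both share the implicit reduction to $\mu\ge 0$ (the paper's first displayed inequality in the proof also needs it). Your detour proving $L^2$ from the dyadic decay and then interpolating up to $s\ge 2$ is redundant once you cite \cite{DuoanVargas} — that reference already covers the full range $1<s<\infty$, and the range $1<s<2$ is the real content there, so one might as well quote it once for the whole claim as the paper does. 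Your treatment of the biparameter variants is clean: the $\sup(FG)\le(\sup F)(\sup G)$ step decouples the two dilation parameters so each factor sees only its own set $E_i$, correctly yielding the weaker hypothesis $2a\ge d+\max\{\beta_1,\beta_2\}$; the paper achieves the same by noting the two extreme cases each depend on only one $\beta_i$. Overall, your pointwise product domination is a nice alternative that avoids interpolation entirely and is a bound of some independent interest, at the cost of having to select the auxiliary exponents $P,Q$ and check the resulting $p/P,q/Q>1$ constraints.
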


\begin{proof}
    We first observe that for any $1<p\leq \infty$ one has  
$$\mathcal{M}_{\hat{\mu},E}:\,L^{p}\times L^{\infty}\rightarrow L^{p}.$$
To see this, first observe that the case $p=\infty$ is trivially true. Next, let us assume $1<p<\infty$. For $g\in L^{\infty}$,
\begin{equation*}
    \mathcal{M}_{\hat{\mu},E}(f,g)(x)\leq  \|g\|_{\infty} \sup_{l\in \Z}\sup_{t\in E} \int |f(x-2^{l}ty)|\,d\mu (y,z).
\end{equation*}
For any $t\in E$, and $l\in \Z$, by using that the function constant $1$ in $\R^d$, has Fourier transform $\hat{1}(\eta)=\delta_0(\eta)$, we get
\begin{equation*}
\begin{split}
        \int |f(x-t2^ly)|\,d\mu(y,z) =& \int |f(x-t2^ly)|1(x-t2^lz)\,d\mu(y,z) \\
        =&\int_{\R^{2d}} \widehat{|f|}(\xi)\delta_{0}(\eta)\hat{\mu}(t2^{l}\xi,t2^{l}\eta)e^{2\pi ix\cdot (\xi+\eta)}\,d\xi d\eta\\
        \leq&\int_{\R^{d}} \widehat{|f|}(\xi)\hat{\mu}(t2^{l}\xi,0)e^{2\pi i x\cdot \xi}\,d\xi. 
\end{split}
 \end{equation*}

Consider the sublinear operator
$$f\mapsto M^{0}_{E}f(x):=\sup_{l\in \Z}\sup_{t\in E} \left|\int_{\R^{d}}\hat{f}(\xi)\hat{\mu}(t2^{l}\xi,0)e^{2\pi i x\cdot \xi}\,d\xi\right|. $$
Observe that $M_{E}^{0}$ is bounded on $L^{p}$ for all $1<p<\infty$. That is indeed the case for $2a\geq d+\beta$ by an application of Theorem B in \cite{DuoanVargas} for the multiplier $m(\xi)=\hat{\mu}(\xi,0)$ which has decay $|\partial^{\alpha}m(\xi)|\lesssim_{\alpha} (1+|\xi|)^{-a}$ for any multi-index $\alpha$. 

That is enough to get the claimed bound since 
$$\|\mathcal{M}_{\hat{\mu},E}(f,g)\|_{L^{p}}\lesssim \|g\|_{L^{\infty}} \|M_E^{0}(|f|)\|_{L^p}\lesssim \|g\|_{L^{\infty}}\|f\|_{L^{p}}.$$

Symmetrically one can also show 
$$\mathcal{M}_{\hat{\mu},E}:L^{\infty}\times L^{q}\rightarrow L^{q},$$
for all $1<q\leq \infty$.
Hence by interpolation $\mathcal{M}_{\hat{\mu},E}$ satisfies H\"older bounds of the form 
$$\mathcal{M}_{\hat{\mu},E}:L^{p}\times L^{q}\rightarrow L^{r}$$
for all $1<p,q\leq \infty$ and $1/r=1/p+1/q<1$.

The proof above implies our claim for the biparameter case since one will have
$$\mathcal{M}_{\hat{\mu},E_1,E_2}^{bip}:\,L^{p}\times L^{\infty}\rightarrow L^{p}$$
for any $1<p\leq \infty$ as long as $2a\geq d+\beta_1$, and 
$$\mathcal{M}_{\hat{\mu},E_1,E_2}^{bip}:\,L^{\infty}\times L^{q}\rightarrow L^{q}$$
for any $1<q\leq \infty$ as long as $2a\geq d+\beta_2$.
\end{proof}

\subsection{Some \texorpdfstring{$L^{p}$}{Lg} improving bounds for the single-scale \texorpdfstring{$\mathcal{A}_{\hat{\mu},E}$}{Lg}}
 The goal of this subsection is to give a partial description of the boundedness region $\mathcal{R}(\mu,E)$ of the operator $\mathcal{A}_{\hat\mu,E}$. We already know from Proposition \ref{banachholderbounds} and Theorem \ref{sobolevE} that if $\hat{\mu}$ is $a$-admissible with $a$ sufficiently large, namely $2a>d+\beta$, then the region $\mathcal{R}(\mu,E)$ contains at least the interior of the convex closure of the points $(0,0,0), (0,1,1), (1,0,1)$ and $(1/2,1/2,1/2)$. Since we actually have Sobolev smoothing estimates at $L^{2}\times L^{2}\rightarrow L^{2}$, we can do better, by using Sobolev embedding theorems which will lead to $L^{p}$ improving bounds of the form $\mathcal{A}_{\hat{\mu},E}:L^{p}\times L^{q}\rightarrow L^{2}$ with $p,q<2$. In fact, the same bound holds for the general multiplier $m$, not only the case $m=\hat{\mu}$. We state and prove the result below in this generality.

\begin{prop}[Bounds for $\mathcal{A}_{m,E}$ and $\mathcal{A}_{m,E_1,E_2}^{(2)}$ with target space $L^2$]\label{prop:LpimprovingforAEtargetL2}
    Suppose $m$ is a-admissible up to order $1$ with $2a>d+\beta$. If $2a \leq 2d+\beta$, then
\begin{equation}\label{Lpimprovingbounds}
      \mathcal{A}_{m,E}:\, L^{p}\times L^{q}\rightarrow L^{2}  
    \end{equation}
    for $\frac{2d}{2a-\beta}<p,q\leq 2$ with $\frac{1}{p}+\frac{1}{q}<\frac{1}{2}+\frac{2a-\beta}{2d}$. If $2a>2d+\beta$,  (\ref{Lpimprovingbounds}) holds for $1<p,q\leq 2$ and $1/p+1/q<\frac{3}{2}$.

    Moreover, if $m$ is $a$-admissible up to order $2$ with $2a>d+\beta_1+\beta_2$ and $2a\leq 2d+{\beta_1}+{\beta_2}$, then 
$$\mathcal{A}_{m,E_1,E_2}^{(2)}:\,L^{p}\times L^{q}\rightarrow L^{r}$$
for $\frac{2d}{2a-\beta_1-\beta_2}<p,q\leq 2$ with $\frac{1}{p}+\frac{1}{q}<\frac{1}{2}+\frac{2a-\beta_1-\beta_2}{2d}$. While if $2a> 2d+{\beta_1}+{\beta_2}$, then 
$$\mathcal{A}_{m,E_1,E_2}^{(2)}:\,L^{p}\times L^{q}\rightarrow L^{r}$$
for $1<p,q\leq 2$ with $\frac{1}{p}+\frac{1}{q}<\frac{3}{2}$.
\end{prop}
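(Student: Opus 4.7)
The plan is to derive Proposition \ref{prop:LpimprovingforAEtargetL2} by combining the $L^2\times L^2\to L^2$ Sobolev smoothing already established (Theorem \ref{sobolevE} in the one-parameter case and Corollary \ref{biparametersobolev} in the biparameter case) with the standard Sobolev embedding $L^p\hookrightarrow H^{-s}$ for $1<p\le 2$, where $s=d(\frac{1}{p}-\frac{1}{2})$ (a consequence by duality of $H^{s}\hookrightarrow L^{p'}$). Concretely, given $1<p,q\le 2$, I would set
\[
s_1=d\left(\frac{1}{p}-\frac{1}{2}\right),\qquad s_2=d\left(\frac{1}{q}-\frac{1}{2}\right),
\]
both nonnegative, so that $\|f\|_{H^{-s_1}}\lesssim \|f\|_{L^p}$ and $\|g\|_{H^{-s_2}}\lesssim\|g\|_{L^q}$. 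Feeding this into Theorem \ref{sobolevE} then produces
\[
\|\mathcal{A}_{m,E}(f,g)\|_{L^2}\lesssim \|f\|_{H^{-s_1}}\|g\|_{H^{-s_2}}\lesssim \|f\|_{L^p}\|g\|_{L^q},
\]
provided the smoothness condition $s_1+s_2<\frac{2a-d-\beta}{2}$ from Theorem \ref{sobolevE} is satisfied.

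The next step is to rewrite that condition as a constraint on $(p,q)$. A direct computation gives
\[
s_1+s_2<\frac{2a-d-\beta}{2}\iff \frac{1}{p}+\frac{1}{q}<\frac{1}{2}+\frac{2a-\beta}{2d}.
\]
In the subcase $2a\le 2d+\beta$, the right-hand side lies in $(1,3/2]$; combined with the constraint $p,q\le 2$ (so that $\frac{1}{p},\frac{1}{q}\ge \frac{1}{2}$), this forces $\frac{1}{q}<\frac{2a-\beta}{2d}$ and symmetrically for $\frac{1}{p}$, yielding exactly the lower bound $p,q>\frac{2d}{2a-\beta}$ in the proposition. In the subcase $2a>2d+\beta$, the right-hand side exceeds $3/2$, so the proposition's (conservative) constraint $\frac{1}{p}+\frac{1}{q}<\frac{3}{2}$ together with $1<p,q\le 2$ is a strict subset of the admissible range, and the conclusion follows.

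The biparameter case is handled by exactly the same recipe with Theorem \ref{sobolevE} replaced by Corollary \ref{biparametersobolev}; this modifies the smoothness threshold from $s_1+s_2<\frac{2a-d-\beta}{2}$ to $s_1+s_2<\frac{2a-d-\beta_1-\beta_2}{2}$, so after applying the Sobolev embedding one obtains $\frac{1}{p}+\frac{1}{q}<\frac{1}{2}+\frac{2a-\beta_1-\beta_2}{2d}$, and the two subcases of the proposition follow by replacing $\beta$ by $\beta_1+\beta_2$ throughout. The $a$-admissibility up to order $2$ in the biparameter hypothesis is precisely what Corollary \ref{biparametersobolev} requires. No real difficulty appears in the argument: both ingredients are already in place and only bookkeeping of exponents is needed, the only subtle point being that the endpoint $p=1$ (resp.\ $q=1$) must be excluded because the Sobolev embedding into $H^{-d/2}$ fails at $L^1$.
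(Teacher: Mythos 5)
Your proof is correct and follows essentially the same route as the paper: both combine the Sobolev smoothing bounds (Theorem \ref{sobolevE}, resp.\ Corollary \ref{biparametersobolev}) with the embedding $L^{p}\hookrightarrow H^{-s}$, $s=d(1/p-1/2)$. The only cosmetic difference is that you apply the smoothing estimate directly with both $s_1,s_2>0$, whereas the paper takes $s_2=0$ (and symmetrically $s_1=0$) to establish the two edges $L^{p}\times L^{2}\rightarrow L^{2}$ and $L^{2}\times L^{q}\rightarrow L^{2}$ and then interpolates — the resulting regions are identical.
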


\begin{figure}[h]
\begin{center}
         \scalebox{1}{
\begin{tikzpicture}
\draw (-0.3,3.8) node {$\frac{1}{q}$};
\draw (3.8,0) node {$\frac{1}{p}$};
\draw (3,-0.4) node {$1$};
\draw (-0.4,3) node {$1$};
\draw (1.5,-0.5) node{$\frac{1}{2}$};
\draw (-0.5,1.5) node{$\frac{1}{2}$};
\draw (-0.5,2.5) node{$\frac{2a-\beta}{2d}$};
\draw (2.5,-0.5) node{$\frac{2a-\beta}{2d}$};
\draw[purple] (3.4,2.3) node {\small{$\frac{1}{p}+\frac{1}{q}=\frac{1}{2}+\frac{2a-\beta}{2d}$}};
\draw[->,line width=1pt] (-0.2,0)--(3.5,0);
\draw[->,line width=1pt] (0,-0.2)--(0,3.5);
\draw[-,line width=0.75pt] (3,-0.1)--(3,0.1);
\draw[-,line width=0.75pt] (-0.1,3)--(0.1,3);
\draw[-,line width=0.75pt] (1.5,-0.1)--(1.5,0.1);
\draw[-,line width=0.75pt] (-0.1,1.5)--(0.1,1.5);
\draw[-,line width=0.75pt] (-0.1,2.5)--(0.1,2.5);
\draw[-,line width=0.75pt] (2.5,-0.1)--(2.5,0.1);
\draw[dashed] (3,0)--(3,3)--(0,3);
\draw[purple,dashed] (2.5,0)--(2.5,1.5);
\draw[purple,dashed] (0,2.5)--(1.5,2.5);
\draw[purple,dashed] (1.5,0)--(1.5,1.5);
\draw[purple,dashed] (0,1.5)--(1.5,1.5);

\filldraw[purple!25!white] (1.5,2.5)--(1.5,1.5)--(2.5,1.5)--(1.5,2.5);
\draw[purple, line width=1pt] (1.5,2.5)--(1.5,1.5)--(2.5,1.5);
\draw[purple, dashed, line width=0.75pt] (2.5,1.5)--(1.5,2.5); 

\draw[purple,line width=0.75pt] (2.5,1.5) circle (2pt);
\draw[purple,line width=0.75pt] (1.5,2.5) circle (2pt);
\fill[purple] (1.5,1.5) circle (2pt);
\fill[white] (2.5,1.5) circle (2pt);
\fill[white] (1.5,2.5) circle (2pt);


\end{tikzpicture}}
\end{center}
\caption{Region for pairs $(1/p,1/q)$ such that $\|\mathcal{A}_{m,E}\|_
{L^{p}\times L^{q}\rightarrow L^{2}}<\infty$ given by Proposition \ref{prop:LpimprovingforAEtargetL2} in the case $a\leq d+\frac{\beta}{2}$. 
}
\end{figure}

\begin{rem}\label{remarkAEregion}
In the case $m=\hat\mu$, by Proposition \ref{banachholderbounds} and the trivial inequality $\mathcal{A}_{\hat{\mu},E}(f,g)(x)\leq \mathcal{M}_{\hat{\mu},E}(f,g)(x)$, we also know that $\mathcal{A}_{\hat{\mu},E}:L^{2}\times L^{\infty}\rightarrow L^{2}$ and $\mathcal{A}_{\hat{\mu},E}:L^{\infty}\times L^{2}\rightarrow L^{2}$. In the case $a\leq d+\frac{\beta}{2}$, interpolation implies $\mathcal{A}_{\hat{\mu},E}:L^{p}\times L^{q}\rightarrow L^{2}$ for any $(\frac{1}{p},\frac{1}{q})$ in the interior of the convex closure of the points $(\frac{1}{2},0),\,(0,\frac{1}{2}),\, (\frac{2a-\beta}{2d},\frac{1}{2})$ and $(\frac{1}{2},\frac{2a-\beta}{2d})$ and in the closed segment connecting $(\frac{1}{2},0)$ and $(0,\frac{1}{2})$.       
\end{rem}

\begin{figure}[h]
\begin{center}
         \scalebox{1}{
\begin{tikzpicture}
\draw (-0.3,3.8) node {$\frac{1}{q}$};
\draw (3.8,0) node {$\frac{1}{p}$};
\draw (3,-0.4) node {$1$};
\draw (-0.4,3) node {$1$};
\draw (1.5,-0.5) node{$\frac{1}{2}$};
\draw (-0.5,1.5) node{$\frac{1}{2}$};
\draw (-0.5,2.5) node{$\frac{2a-\beta}{2d}$};
\draw (2.5,-0.5) node{$\frac{2a-\beta}{2d}$};
\draw[purple] (3.4,2.3) node {\small{$\frac{1}{p}+\frac{1}{q}=\frac{1}{2}+\frac{2a-\beta}{2d}$}};
\draw[->,line width=1pt] (-0.2,0)--(3.5,0);
\draw[->,line width=1pt] (0,-0.2)--(0,3.5);
\draw[-,line width=0.75pt] (3,-0.1)--(3,0.1);
\draw[-,line width=0.75pt] (-0.1,3)--(0.1,3);
\draw[-,line width=0.75pt] (1.5,-0.1)--(1.5,0.1);
\draw[-,line width=0.75pt] (-0.1,1.5)--(0.1,1.5);
\draw[-,line width=0.75pt] (-0.1,2.5)--(0.1,2.5);
\draw[-,line width=0.75pt] (2.5,-0.1)--(2.5,0.1);
\draw[dashed] (3,0)--(3,3)--(0,3);
\draw[purple,dashed] (2.5,0)--(2.5,1.5);
\draw[purple,dashed] (0,2.5)--(1.5,2.5);
\draw[purple,dashed] (1.5,0)--(1.5,1.5);
\draw[purple,dashed] (0,1.5)--(1.5,1.5);

\filldraw[blue!20!white](1.5,0)--(2.5,1.5)--(1.5,2.5)--(0,1.5);
\filldraw[purple!25!white] (1.5,2.5)--(1.5,1.5)--(2.5,1.5)--(1.5,2.5);
\draw[purple, dashed, line width=0.75pt] (2.5,1.5)--(1.5,2.5); 
\draw[blue, dashed, line width=0.75pt] (1.5,0)--(2.5,1.5); 
\draw[blue, dashed, line width=0.75pt] (0,1.5)--(1.5,2.5); 

\draw[purple,line width=0.75pt] (2.5,1.5) circle (2pt);
\draw[purple,line width=0.75pt] (1.5,2.5) circle (2pt);
\fill[white] (2.5,1.5) circle (2pt);
\fill[white] (1.5,2.5) circle (2pt);

\draw[blue,line width=0.75pt] (1.5,0)--(0,1.5);
\filldraw[blue] (1.5,0) circle (2pt);
\filldraw[blue] (0,1.5) circle (2pt);

\end{tikzpicture}}
\end{center}
\caption{Colored region represents the pairs $(1/p,1/q)$ such that $\|\mathcal{A}_{\hat{\mu},E}\|_
{L^{p}\times L^{q}\rightarrow L^{2}}<\infty$ given by Remark \ref{remarkAEregion} in the case $a\leq d+\frac{\beta}{2}$. 
}
\end{figure}
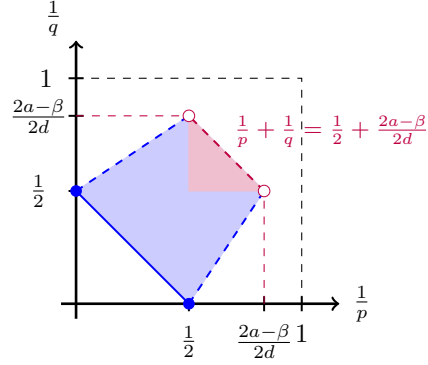

\begin{proof}[Proof of Proposition \ref{prop:LpimprovingforAEtargetL2}] The second claim for $\mathcal{A}_{m,E}$ follows from the first by setting $a=d+\beta/2$ in the first claim, so let us assume $a\leq d+\frac{\beta}{2}$.

 By the duality between $H^{s}$ and $H^{-s}$, we have 
 $$\|f\|_{H^{-s}}=\sup\{\langle f,g\rangle_{L^{2}}\colon\, g\in \mathcal{S}(\R^d),\|g\|_{H^s}\leq 1\}.$$
For $0<s<d/2$, Sobolev embedding implies that $H^{s}$ embedds in $L^{q_s}$ for $q_s=\frac{2d}{d-2s}$ (see Theorem 1.3.5 in \cite{ModernFAgrafakos}, for example). Hence, for $0<s<d/2$ and $1/q_s+1/q_s'=1$,
\begin{equation}\label{sobolevembedding}\|f\|_{H^{-s}}\lesssim \|f\|_{L^{q_s'}}=\|f\|_{L^{p_s}}, \quad p_s:=\frac{2d}{d+2s}.
\end{equation}

Since $2a\leq 2d+{\beta}$ one has $\frac{2a-d-\beta}{2}\leq \frac{d}{2}$. So from Theorem \ref{sobolevE} and (\ref{sobolevembedding}), we get for any $0\leq s<\frac{2a-d-\beta}{2}$,
\begin{equation}
    \begin{split}
\|\mathcal{A}_{m,E}(f,g)\|_{L^{2}}&\lesssim \|f\|_{H^{-s}}\|g\|_{L^{2}}\\
&\lesssim \|f\|_{L^{p_s}}\|g\|_{L^{2}}
    \end{split}
\end{equation}
for $p_s=\frac{2d}{d+2s}$. Therefore,
\begin{equation}\label{firstsobolevembeddingconsequence}
\mathcal{A}_{m,E}:L^{p}\times L^{2}\rightarrow L^{2}
\end{equation}
for any $\frac{2d}{2a-\beta}<p\leq 2$. Similarly, Theorem \ref{sobolevE} and inequality (\ref{sobolevembedding}) also imply 
\begin{equation}\label{secondsoboleembeddingconsequence}
\mathcal{A}_{m,E}:L^{2}\times L^{q}\rightarrow L^{2}\,\text{ for }\frac{2d}{2a-\beta}<q\leq 2.
\end{equation}
Interpolating (\ref{firstsobolevembeddingconsequence}) and (\ref{secondsoboleembeddingconsequence}) we get the claimed $L^{p}$ improving bounds.

In the biparameter case, we are again reduced to just checking the case $2a\leq 2d+\beta_1+\beta_2$. For any $0\leq s<\frac{2a-d-\beta_1-\beta_2}{2}$, it follows from Corollary \ref{biparametersobolev} combined with the Sobolev embedding in (\ref{sobolevembedding}) that 
\begin{equation}
\|\mathcal{A}_{m,E_1,E_2}^{(2)}(f,g)\|_{L^{2}}\lesssim \|f\|_{H^{-s}}\|g\|_{L^2}\lesssim \|f\|_{L^{p_s}}\|g\|_{L^2}
\end{equation}
for $p_s=\frac{2d}{d+2s}$.
That means that 
$$\|\mathcal{A}_{m,E_1,E_2}^{(2)}\|_{L^{p}\times L^{2}\rightarrow L^2}<\infty$$
for $\frac{2d}{2a-\beta_1-\beta_2}<p\leq 2$, and similarly,
$$\|\mathcal{A}_{m,E_1,E_2}^{(2)}\|_{L^{2}\times L^{q}\rightarrow L^2}<\infty$$
for $\frac{2d}{2a-\beta_1-\beta_2}<q\leq 2$. The claimed bounds then follow from interpolation. 
\end{proof}

\subsection{H\"older bounds for \texorpdfstring{$\mathcal{M}_{\hat{\mu},E}$}{Lg} from the sparse bounds}
In this subsection we finally prove Theorem \ref{Lebesgueboundsmultiscale}. It will a consequence of the Lebesgue bounds that have been obtained so far in the previous two sections and the Lebesgue H\"older bounds for $\mathcal{M}_{\hat{\mu},E}$ that can be obtained as a consequence of the sparse bounds for $\mathcal{M}_{\hat{\mu},E}$ given by Corollary \ref{sparseboundME}. Note that, as usual, the sparse bounds in fact imply stronger weighted norm inequalities for multilinear $A_p$ weights. Since the deduction is standard, we omit those corollaries in this article, while only discuss the unweighted case for comparison with earlier results in this section. 

\begin{proof}[Proof of Theorem \ref{Lebesgueboundsmultiscale}]
    Let $(1/p_0,1/q_0)$ be a point in the interior of the triangle determined by the points $(\frac{1}{2},\frac{1}{2}),\,(\frac{2a-\beta}{2d},\frac{1}{2})$ and $(\frac{1}{2},\frac{2a-\beta}{2d})$. 
    Observe that $(1/p_0,1/q_0,1/2)$ is a point in $\text{int}(\mathcal{R}(\mu,E))$ with $r_0:=2>1$ and $1/p_0,1/q_0>1/2=1/r_0$, according to Proposition \ref{prop:LpimprovingforAEtargetL2}. So by Corollary \ref{sparseboundME} we know that $\mathcal{M}_{\hat{\mu}, E}$ satisfies sparse bounds with parameters $(p_0,q_0,2)$. By \cite[Proposition 1.2]{CDPO} it follows that $\mathcal{M}_{\hat{\mu},E}:L^{p}\times L^{q}\rightarrow L^{r}$ is bounded for any  $0\leq 1/p< 1/p_0,\,0\leq 1/q< 1/q_0$ with $\max\{1/p,1/q\}>0$ and $1/r=1/p+1/q$. By varying over all $(1/p_0,1/q_0)$ we get H\"older boundedness for $\mathcal{M}_{\hat{\mu},E}$ for $(1/p,1/q)$ satifying $0\leq 1/p,1/q<\frac{2a-\beta}{2d}$ and $0<1/p+1/q<1/2+\frac{2a-\beta}{2d}$. We also recall that from Proposition \ref{banachholderbounds} we have H\"older bounds for any $0\leq 1/p+1/q<1$, which by interpolation leads to the claimed bounds.
\end{proof}

\begin{rem}
    The proof above adapts to get H\"older Lebesgue bounds for $\mathcal{M}_{\hat{\mu},E_1,E_2}^{(2)}$. Suppose that $2a>d+\beta_1+\beta_2$, and that we are in the case $2a\leq 2d+\beta_1+\beta_2$. By using Proposition \ref{prop:LpimprovingforAEtargetL2} for $\mathcal{A}_{\hat{\mu},E_1,E_2}^{(2)}$ combined with Corollary \ref{cor:sparseforbiparameter}, we have $(p_0,q_0,2)$ sparse bounds for $\mathcal{M}_{\hat{\mu},E_1,E_2}^{(2)}$ for any $(1/p_0,1/q_0)$ in the interior of the triangle determined by $(\frac{1}{2},\frac{1}{2}),\,(\frac{2a-\beta_1-\beta_2}{2d},\frac{1}{2})$ and $(\frac{1}{2},\frac{2a-\beta_1-\beta_2}{2d})$. That, combined with \cite[Proposition 1.2]{CDPO} and the Banach bounds from Proposition \ref{banachholderbounds} lead to H\"older Lebesgue bounds $\mathcal{M}_{\hat{\mu},E_1,E_2}^{(2)}:L^{p}\times L^{q}\rightarrow L^{\frac{pq}{p+q}}$ for any $(1/p,1/q)$ in interior of the convex closure of the points
    $$(1,0),\,(0,0),\,(0,1),\,\left(\frac{1}{2},\frac{2a-\beta_1-\beta_2}{2d}\right),\,\text{ and }\left(\frac{2a-\beta_1-\beta_2}{2d},\frac{1}{2}\right).$$
\end{rem}

\subsection{Lifting \texorpdfstring{$L^{p}$}{Lg} improving bounds for \texorpdfstring{$\mathcal{A}_{\hat{\mu},E}$}{Lg} to quasi-Banach H\"older bounds}

Before wrapping up the section, we prove some results that further enlarge the known region of $L^p$ improving bounds for the single-scale operator $\mathcal{A}_{\hat{\mu},E}$. These bounds are not needed in the proof of Theorem \ref{Lebesgueboundsmultiscale}, hence are presented separately here. The sharp boundedness region for $\mathcal{A}_{\hat\mu, E}$, even in the case $\mu=\sigma_{2d-1}$, remains an interesting open question. We will derive further information on the region by studying some examples in the next section.

Through a minor variant of an argument in \cite[Proposition 4.1]{IPS}, for all the single-scale bilinear operators of the form $\mathcal{A}_{\hat{\mu},E}$, where $\mu$ is a compactly supported finite surface measure, we can deduce bounds of the form $L^{p}\times L^{q}\rightarrow L^{r}$, $1/r=1/r_H(p,q):=1/p+1/q$, whenever we know an $L^{p}$ improving bound of the form $L^p\times L^q \rightarrow L^{r_0}$,  with $1/p+1/q\geq 1/r_0$ and $r_H(p,q)<1$. 

The idea is that, due to the local nature of this single-scale maximal operators (in the sense that $\mathcal{A}_{\hat{\mu},E}(f,g)(x)$ only depends on values of $f,g$ at points in a fixed-size neighborhood of $x$), we can morally reduce to being on a domain of finite measure and use the fact that $L^{r_0}$ embeds into $L^{r(p,q)}$ for such a domain.

\begin{prop}\label{liftingtoholderbound}
    Let $A$ be a maximal bilinear Radon transform of the form $\mathcal{A}_{\hat{\mu},E}$ or $\mathcal{A}_{\hat{\mu},E_1,E_2}^{(2)}$ associated to a finite measure $\mu$ compactly supported in a ball of radius $K>1$, and whose dilation sets are subsets of $[1,2]$. If $A:\,L^p\times L^q\rightarrow L^{r_0}$ continuously, with $1/r_0\leq 1/p+1/q$ and $1/p+1/q>1$ then $A:\,L^p\times L^q\rightarrow L^r$ continuously, for $1/r=1/p+1/q$ with the operator norm controlled by $C(K)\Vert A\Vert_{L^p\times L^q\rightarrow L^{r_0}}$.
\end{prop}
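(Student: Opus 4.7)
The plan is to exploit the spatial locality of the single-scale operator $A$: because $\mu$ is compactly supported in $B(0,K)$ and the dilation parameters lie in $[1,2]$, the pointwise value of $A(f,g)(x)$ depends only on the restrictions of $f$ and $g$ to the ball $B(x,2K)$. This locality holds identically for $\mathcal{A}_{\hat\mu,E}$ and $\mathcal{A}^{(2)}_{\hat\mu,E_1,E_2}$, since in both cases the integrand is $f(x-t_1 y)g(x-t_2 z)$ with $(y,z)\in\supp(\mu)$ and $t_1,t_2\in[1,2]$. I would fix a tiling of $\R^d$ by unit cubes $\{Q_k\}$ and let $\widetilde Q_k$ be the concentric dilate of $Q_k$ by a factor $C_1=C_1(K)$ large enough that $B(x,2K)\subset \widetilde Q_k$ for every $x\in Q_k$; the dilated cubes $\{\widetilde Q_k\}$ then have $K$-dependent bounded overlap.

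On each unit cube, I would combine locality with the embedding $L^{r_0}(Q_k)\hookrightarrow L^{r}(Q_k)$, which is valid since $r\le r_0$ (from $1/r=1/p+1/q\ge 1/r_0$) and $|Q_k|=1$. Using the assumed boundedness, this produces
\[
\|A(f,g)\|_{L^r(Q_k)} \le \|A(f,g)\|_{L^{r_0}(Q_k)} = \|A(f\chi_{\widetilde Q_k},g\chi_{\widetilde Q_k})\|_{L^{r_0}(Q_k)}\le \|A\|_{L^p\times L^q\to L^{r_0}}\|f\chi_{\widetilde Q_k}\|_{p}\|g\chi_{\widetilde Q_k}\|_{q}.
\]

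Next I would sum these local estimates globally. Since $1/p+1/q>1$, we have $r<1$, so $\|\cdot\|_{L^r}^r$ is countably additive across the disjoint cubes and
\[
\|A(f,g)\|_{L^r}^r = \sum_k \|A(f,g)\|_{L^r(Q_k)}^r \lesssim \|A\|_{L^p\times L^q\to L^{r_0}}^r\sum_k \|f\chi_{\widetilde Q_k}\|_p^r\|g\chi_{\widetilde Q_k}\|_q^r.
\]
The Hölder identity $r/p+r/q=1$, combined with the discrete Hölder inequality in $\ell^{p/r}\times\ell^{q/r}$ (valid since $1/r\ge 1/p$ and $1/r\ge 1/q$ force $p/r,q/r\ge 1$), bounds the last sum by
\[
\Big(\sum_k\|f\chi_{\widetilde Q_k}\|_p^p\Big)^{r/p}\Big(\sum_k\|g\chi_{\widetilde Q_k}\|_q^q\Big)^{r/q}.
\]
The bounded overlap of $\{\widetilde Q_k\}$ then controls each factor by a $K$-dependent constant times $\|f\|_p^p$ or $\|g\|_q^q$, delivering the desired bound with norm $\lesssim_K \|A\|_{L^p\times L^q\to L^{r_0}}$.

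I do not anticipate any step as a serious obstacle; the proof is essentially an organization of locality, the embedding $L^{r_0}\hookrightarrow L^r$ on a finite measure set, and a quasi-Banach $\ell^r$ summation. The mildly subtle point is checking that the sequence-space Hölder exponents $p/r$ and $q/r$ are admissible, which is exactly what the hypotheses $1/p+1/q\ge 1/r_0$ and $1/p+1/q>1$ guarantee together with $1/r=1/p+1/q$.
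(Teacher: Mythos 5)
Your proof is correct and proceeds in the same spirit as the paper's — locality from the compact support of $\mu$, the embedding $L^{r_0}\hookrightarrow L^r$ on sets of bounded measure, and a discrete H\"older to resum — but the bookkeeping is organized differently. The paper decomposes both inputs into unit-cube pieces $f\chi_{Q_l}$, $g\chi_{Q_l+s}$ with a finite set of shifts $s$ and then needs the quasi-Banach subadditivity $|\sum_l a_l|^r\le \sum_l |a_l|^r$ (for $r<1$) to resum over $l$; you instead restrict the \emph{output} to the disjoint unit cubes $Q_k$, use locality to replace the inputs on each cube by $f\chi_{\widetilde Q_k}$, $g\chi_{\widetilde Q_k}$, and then the identity $\|A(f,g)\|_{L^r}^r=\sum_k\|A(f,g)\|_{L^r(Q_k)}^r$ comes for free from disjointness, with no appeal to $r<1$ subadditivity. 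This is a slightly cleaner variant: you trade the double-indexed shift sum for bounded overlap of the dilated cubes $\widetilde Q_k$, and the only place the quasi-Banach regime matters in your version is in making the statement nontrivial ($r<r_0$). One small remark: the admissibility of the discrete H\"older exponents $p/r,q/r\ge 1$ follows already from $1/r=1/p+1/q$ (and $p,q\ge 1$), not specifically from the hypothesis $1/p+1/q>1$; that hypothesis enters only to guarantee $r<1$, i.e.\ that the target exponent is genuinely below $r_0$.
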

\begin{proof}
    We can tile Euclidean space by unit cubes $\{Q_l\}_{l\in\Z^d}$ and let $S$ denote the set of integer lattice points in the box centered at the origin with sidelength  $10K$ in Euclidean space, i.e. $S=\Z^{d}\cap [-5K,5K]^{d}$. By sub-linearity and by the compact support assumptions for $\mu$ we get
    \begin{align*}
        \Vert A(f,g)\Vert_{L^r}&\leq  \left(\int \left( \sum_{s\in S} \sum_{l\in \BZ^d} A(f\chi_{Q_l},g\chi_{Q_l+s})(x)\right)^{r}\,dx\right)^{1/r} \\
        &\lesssim_K\sum_{s\in S} \left(\int\left|\sum_{l\in\Z^d}A(f\chi_{Q_l},g\chi_{Q_{l}+s})(x)\right|^{r}\,dx\right)^{1/r},
    \end{align*}
    because $S$ is finite and $L^{r}$ is a quasi-Banach space. Now using that $r<1$,
\begin{align*}
\sum_{s\in S} &\left(\int\left|\sum_{l\in \Z^d}A(f\chi_{Q_l},g\chi_{Q_{l}+s})(x)\right|^{r}\,dx\right)^{1/r}\\
\leq&\sum_{s\in S} \left(\sum_{l\in \Z^d}\int|A(f\chi_{Q_l},g\chi_{Q_{l}+s})(x)|^r \,dx\right)^{1/r}.
    \end{align*}

    Notice that each term in the sum is an integral of a function whose support has measure at most some universal constant $C(K)$. Thus, we can replace by the $L^{r_0}$ norm at the cost of just a multiplicative constant. Thus,
    \begin{align*}
        \Vert A(f,g)\Vert_{L^r}&\le C(K) \sum_{s\in S} \left(\sum_{l\in \BZ^d}\Vert A(f\chi_{Q_l},g\chi_{Q_l+s})\Vert_{L^{r_0}}^{r}\right)^{1/r} \\
        &\le C(K)\Vert A\Vert_{L^p\times L^q\rightarrow L^{r_0}} \sum_{s\in S}\left(\sum_{l\in \BZ^d} \Vert f \chi_{Q_l}\Vert_{L^p}^{r} \Vert g\chi_{Q_l+s}\Vert_{L^q}^{r}\right)^{1/r} \\
        &\le C(K)\Vert A\Vert_{L^p\times L^q\rightarrow L^{r_0}} \|f\|_{L^p}\|g\|_{L^q},
    \end{align*}
    where the last step follows from the fact that $\#S\lesssim K^{d}$ and H\"older's inequality, since $\|f\|_p^{p}=\sum_{l\in \BZ^d} \Vert f \chi_{Q_l}\Vert_{L^{p}}^{p}$.
\end{proof}

In Proposition \ref{prop:LpimprovingforAEtargetL2} we obtained some $L^{p}$ improving bounds for $\mathcal{A}_{\hat{\mu},E}:L^{p}\times L^{q}\rightarrow L^{2}$, which satisfy $1/p+1/q>1$. Combining that with the lifting to H\"older bounds given by Proposition \ref{liftingtoholderbound} and interpolation, we conclude the following. 

\begin{prop}[$L^p$ improving bounds for $\mathcal{A}_{\hat{\mu},E}$]\label{lpimprovingsinglescale}
Suppose $\hat{\mu}$ is a-admissible with $2a>d+\beta$. If $a \leq d+\beta/2$, then
\begin{equation}
      \mathcal{A}_{\hat{\mu},E}:\, L^{p}\times L^{q}\rightarrow L^{r}  
    \end{equation}
    for $\frac{2d}{2a-\beta}<p,q\leq 2$ with $\frac{1}{p}+\frac{1}{q}<\frac{1}{2}+\frac{2a-\beta}{2d}$ and any $2\geq r\geq r_H(p,q)$ where $r_H=r_H(p,q)$ is given by the H\"older relation $1/r_H=1/p+1/q$.
    
Moreover, if $a>d+\beta/2$, the hypotheses on $p,q$ become $1<p,q\leq 2$ and $1/p+1/q<3/2$.
\end{prop}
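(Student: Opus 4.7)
The plan is to build up from the $L^p\times L^q\to L^2$ endpoint established in Proposition \ref{prop:LpimprovingforAEtargetL2} and obtain the full range $r\in [r_H(p,q),2]$ by treating the Banach and quasi-Banach ranges separately, then interpolating.

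First, fix $(p,q)$ in the allowed region (either case $a\le d+\beta/2$ or $a>d+\beta/2$). Proposition \ref{prop:LpimprovingforAEtargetL2} immediately gives the endpoint $r=2$ bound
\[
\mathcal{A}_{\hat\mu,E}:L^p\times L^q\to L^2.
\]
To obtain the other endpoint $r=r_H$, I split into two cases depending on whether we are in the Banach or quasi-Banach range.

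If $r_H\ge 1$, i.e.\ $\frac{1}{p}+\frac{1}{q}\le 1$, then since $p,q>1$ I use the trivial pointwise domination $\mathcal{A}_{\hat\mu,E}(f,g)(x)\le \mathcal{M}_{\hat\mu,E}(f,g)(x)$ combined with the Banach H\"older bounds from Proposition \ref{banachholderbounds} (applicable because $2a\ge d+\beta$ follows from the standing hypothesis $2a>d+\beta$). This gives $\mathcal{A}_{\hat\mu,E}:L^p\times L^q\to L^{r_H}$. If instead $r_H<1$, i.e.\ $\frac{1}{p}+\frac{1}{q}>1$, then I apply Proposition \ref{liftingtoholderbound} with $r_0=2$: the hypothesis $1/r_0\le 1/p+1/q$ holds since $1/2<1<1/p+1/q$, and since $\mu$ is compactly supported (a standing assumption in this section) and the dilation set $E\subset [1,2]$ is localized at bounded scale, the lifting mechanism applies and produces $\mathcal{A}_{\hat\mu,E}:L^p\times L^q\to L^{r_H}$.

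Finally, with bounds at both $r=r_H$ and $r=2$ in hand, a standard bilinear real interpolation at fixed $(p,q)$ (for instance, on the fiber of operators with fixed input exponents one can interpolate between $L^{r_H}$ and $L^2$ via the Marcinkiewicz/Riesz-Thorin framework; since $r_H\le 2$, every intermediate $r$ is attained) yields $\mathcal{A}_{\hat\mu,E}:L^p\times L^q\to L^r$ for every $r\in [r_H,2]$, which is precisely the claim. The only nontrivial step is verifying the hypothesis of the lifting Proposition \ref{liftingtoholderbound} in the quasi-Banach case, but this is immediate from the endpoint bound into $L^2$ together with the compact support of $\mu$, so no serious obstacle arises beyond bookkeeping.
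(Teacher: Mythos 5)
Your proof is correct and takes essentially the same route as the paper's: obtain $\mathcal{A}_{\hat\mu,E}:L^p\times L^q\to L^2$ from Proposition \ref{prop:LpimprovingforAEtargetL2}, obtain $L^p\times L^q\to L^{r_H}$ via the localization lemma (Proposition \ref{liftingtoholderbound}), and interpolate in the target space at fixed $(p,q)$. Note, however, that the Banach-range detour through Proposition \ref{banachholderbounds} is vacuous here, since in both cases of the statement the hypothesis $p,q\le 2$ already forces $1/p+1/q\ge 1$ and hence $r_H\le 1$, so the quasi-Banach lifting is what does all the work (with the sole boundary point $p=q=2$, $r_H=1$ glossed over equally by your argument and the paper's, though the proof of Proposition \ref{liftingtoholderbound} extends without change to $r_H=1$).
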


An analogue of this result obviously holds for the biparameter operator $\mathcal{A}^{(2)}_{\hat\mu,E_1,E_2}$ as well, as a consequence of Proposition \ref{prop:LpimprovingforAEtargetL2} and Proposition \ref{liftingtoholderbound}. We leave the exact range of boundedness exponents to the interested reader.

\begin{rem}
In the particular case of $E=\{1\}$  and $\mu=\mu_S$ where $S$ is a compact smooth surface in $\R^{2d}$ with $k>d$ nonvanishing principal curvatures, by lifting $L^{p}$  improving bounds with target space $L^{1}$, \cite{choleeshuin} proved in Proposition 1.2 H\"older bounds $\mathcal{A}_{\hat{\mu}_S,1}:L^{p}\times L^{q}\rightarrow L^{r_H(p,q)}$ for $1\leq p,q\leq 2$, $3/2\leq 1/p+1/q=1/r_H<1+k/2d$. For comparison, the proposition above gives H\"older bounds for $1/p,1/q\in [1/2,1)$  with $1/p+1/q=1/r_H<1/2+k/2d$. These two bounds are not directly comparable to each other. However, in this particular case ($E=\{1\}$, $\hat\mu=\hat\mu_S$), one can improve the result in \cite{choleeshuin} by interpolating it with our earlier Proposition \ref{banachholderbounds} to derive the better bounds for $\mathcal{A}_{\hat\mu_S,1}$ for all H\"older exponents $(p,q,r_H(p,q))$ with $p,q>1$ and $1/p+1/q<1+k/2d$. In particular, in general we do not expect the region obtained by Proposition \ref{lpimprovingsinglescale} above to be sharp, but it does give us nontrivial interesting $L^p$ improving bounds for a large class of bilinear multipliers. 
\end{rem}

\section{Some necessary conditions for the boundedness of \texorpdfstring{$\mathcal{A}_E$}{Lg}}\label{necessaryEsinglescale}

In this section, we will focus on the case $\mu=\sigma_{2d-1}$, i.e., the normalized spherical measure in the unit sphere $S^{2d-1}\subset \R^{2d}$. We extend to the bilinear setting some examples from the works \cite{AHRS} and \cite{BFOPZ} to derive necessary conditions for boundedness of the single-scale bilinear maximal operator $\mathcal{A}_{E}:=\mathcal{A}_{\hat{\sigma}_{2d-1},E}$ in terms of the (upper) Minkowski and Assouad dimensions of the fractal dilation set $E\subset [1,2]$. This seems to suggest that, similar to the linear case, a complete description of the $L^p$ improving region of $\mathcal{A}_E$ may depend on both the upper Minkowski and Assouad dimensions of the set $E$.

In general, all of the examples fall into the same regime: we choose $f,g$ to be indicator functions of suitable regions (balls, boxes, or annuli) with dimensions depending on a parameter $\delta$. Then, we can argue that $\mathcal{A}_E(f,g)(x)$ has a lower bound (in terms of $\delta$) on some region whose measure we understand. Testing the bound
\begin{equation}\label{bound test nec}
    \Vert \mathcal{A}_E(f,g)\Vert_r\lesssim \Vert f\Vert_p \Vert g\Vert_q
\end{equation}
and sending $\delta$ to 0 gives necessary relations among the exponents $p,q,r$. For convenience, we will denote $\A(r,s)=B(0,s)\setminus {B(0,r)}=\{x\in \R^d\colon r\leq |x|<s\}$ for annuli centered at the origin.

\begin{prop}\label{firstnecessary}
Suppose $E$ has upper Minkowski dimension $\beta$. If $\mathcal{A}_E:L^p\times L^q\rightarrow L^r$ continuously then we have
\begin{equation}
    \frac{1}{p}+\frac{1}{q}\le\frac{2d-1}{d}+\frac{1-\beta}{dr}.
\end{equation}
\end{prop}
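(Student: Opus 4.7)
The plan is to test the assumed inequality against the simple pair $f = g = \chi_{B(0,\delta)}$ for small $\delta > 0$, and to exploit the upper Minkowski dimension hypothesis on $E$ to produce a large set on which $\mathcal{A}_E(f,g)$ is bounded below. The right-hand side is immediate: $\|f\|_p \|g\|_q \sim \delta^{d/p + d/q}$.

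For the pointwise lower bound, I would observe that
\[
\mathcal{A}_t(f,g)(x) = \sigma_{2d-1}\bigl(\{(y,z) \in S^{2d-1} : y, z \in B(x/t, \delta/t)\}\bigr).
\]
When $|x| = t/\sqrt{2}$, the point $(x/t, x/t) \in \R^{2d}$ lies exactly on $S^{2d-1}$, and since $S^{2d-1}$ is a smooth hypersurface of codimension one in $\R^{2d}$, the intersection on the right has $(2d-1)$-dimensional surface measure $\gtrsim \delta^{2d-1}$ (e.g.\ by parametrizing a neighborhood as a graph via the implicit function theorem applied to $|y|^2 + |z|^2 - 1$). This bound persists for $|x|$ within $c\delta$ of $t/\sqrt{2}$ for some small fixed $c > 0$, so
\[
\mathcal{A}_E(f,g)(x) \gtrsim \delta^{2d-1} \quad \text{on} \quad F_\delta := \bigcup_{t \in E} \A\bigl(t/\sqrt{2} - c\delta,\ t/\sqrt{2} + c\delta\bigr).
\]

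Next I would estimate $|F_\delta|$ from below. Since $E \subset [1,2]$ the radii in play are $\sim 1$, so in polar coordinates $|F_\delta|$ is comparable to the one-dimensional Lebesgue measure of the $c\delta$-neighborhood of $E/\sqrt{2}$. A maximal $c\delta$-separated subset of $E/\sqrt{2}$ has cardinality comparable to $N(E,\delta)$ (using that $t \mapsto t/\sqrt{2}$ is bi-Lipschitz and hence preserves covering numbers up to constants), and the corresponding $c\delta$-neighborhoods can be chosen disjoint, yielding $|F_\delta| \gtrsim \delta\,N(E,\delta)$. By the limsup definition of $\dim_M(E) = \beta$, for any $\eps > 0$ there exists a sequence $\delta_n \to 0$ along which $N(E,\delta_n) \gtrsim \delta_n^{-\beta + \eps}$, and hence $|F_{\delta_n}| \gtrsim \delta_n^{1 - \beta + \eps}$.

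Combining these pieces, the assumed bound $\|\mathcal{A}_E(f,g)\|_r \lesssim \|f\|_p \|g\|_q$ forces
\[
\delta_n^{2d-1}\,\delta_n^{(1-\beta+\eps)/r} \lesssim \delta_n^{d/p + d/q}
\]
along $\delta_n \to 0$. Letting $\delta_n \to 0$ yields $d/p + d/q \leq 2d - 1 + (1 - \beta + \eps)/r$, and then $\eps \downarrow 0$ gives the claimed necessary condition. The main technical point is really only the geometric surface-measure estimate for $S^{2d-1} \cap (B(x/t,\delta/t) \times B(x/t,\delta/t))$; once that is in hand the rest of the argument is a scaling computation plus the definition of Minkowski dimension, and follows the spirit of the Knapp-type examples used in the linear setting in \cite{AHRS} and in the bilinear work \cite{BFOPZ}.
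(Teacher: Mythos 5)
Your proof is correct and follows essentially the same strategy as the paper: test against $f=g=\chi_{B(0,\delta)}$, observe a lower bound $\gtrsim\delta^{2d-1}$ on a union of roughly $N(E,\delta)$ annuli of width $\sim\delta$ near radii $t/\sqrt{2}$ with $t\in E$, and compare exponents using the upper Minkowski dimension. The only cosmetic difference is that you estimate $\sigma_{2d-1}\bigl(S^{2d-1}\cap (B(x/t,\delta/t)\times B(x/t,\delta/t))\bigr)$ directly as a spherical cap, whereas the paper arrives at the same $\delta^{2d-1}$ bound via the slicing formula; these are two presentations of the same geometric computation.
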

\begin{proof}
Let $f=g=\chi_{B(0,C\delta)}$ be indicator functions of balls where $C$ is a sufficiently large universal constant, such as $C=100$. By the slicing formula, we have that
\[
\mathcal{A}_E(f,g)(x)=\sup_{t\in E}\int_{B^{d}(0,1)} g(x-ty)\left(\int_{S^{d-1}} f(x-t\sqrt{1-|y|^2}z)\,d\sigma_{d-1}(z)\right)(1-|y|^2)^{(d-2)/2}\, dy,
\]
and we will denote this quantity by $k(x)$ for simplicity. First, we illustrate the case $E=\{1\}$ and set $t=1$. For any $x$ such that $||x|-\frac{1}{\sqrt2}|\le \delta$, we claim that we have a lower bound $k(x)\gtrsim \delta^{2d-1}$. Fix such an $x$. We get a lower bound to $k(x)$ by restricting the integration in $y$ to a small delta ball around $x$, say $B_{x}^{\delta}:=\{y\in \R^d\colon|y-x|<\delta \}$. Observe that implies that $|y|$ is $2\delta$ close to $1/\sqrt{2}$ so we have that $\big| |y|-\sqrt{1-|y|^{2}}\big|\le 10\delta$.
Also, for any $y\in B_x^{\delta}$ one has $(1-|y|^2)^{\frac{d-2}{2}}$ is comparable to $1$ and $g(x-y)=1$, so
$$k(x)\gtrsim \int_{B_x^{\delta}} \left(\int_{S^{d-1}} f(x-\sqrt{1-|y|^2}z)d\sigma_{d-1}(z)\right) dy $$
The inner integrand contributes about $\delta^{d-1}$ for such values of $y$ because we can guarantee that $f(x-t\sqrt{1-|y|^{2}}z)=1$ for all $z\in S^{d-1}$ that is $c\delta$-close to $\frac{x}{|x|}\in S^{d-1}$ and a geodesic ball of radius $\delta$ inside $S^{d-1}$ has measure comparable to $\delta^{d-1}$. Thus, $k(x)\gtrsim |B_x^{\delta}|\delta^{d-1}\sim\delta^{2d-1}$ for $x$ on an annulus of measure about $\delta$ (see Figure \ref{picannball1} for illustration).

Now, we consider varying $t\in E$ for a general dilation set $E\subset [1,2]$. Instead of a single annulus $\{x\in \R^d\colon |\sqrt{2}|x|-1|<\sqrt{2}\delta\}$, we get about $N(E,\delta)$ many annuli on which the lower bound $k(x)\gtrsim \delta^{2d-1}$ holds, namely if $\{I_i\}_{i=1}^{N(E,\delta)}$ is a minimal cover of $E$ with $\delta$ closed intervals then we get the lower bound in the set 
$$\{x\in \R^{d}\colon \sqrt{2}|x|\in\sqcup_{i=1}^{N(E,\delta)}I_i \}$$
by using that if $\sqrt{2}|x|\in I_i$, one can take $t_i\in I_i\cap E$ to compute
$$k(x)\geq \mathcal{A}_{t_i}(f,g)(x)\gtrsim \int_{y\colon |y-\frac{x}{t_i}|<\delta} \left(\int_{S^{d-1}} f(x-t_i\sqrt{1-|y|^2}z)d\sigma(z)\right)dy\gtrsim \delta^{d-1}|B_{x/t_i}^{\delta}|$$
so we have the lower bound on a set of measure about $N(E,\delta)\delta$. We put this into the bound \eqref{bound test nec} to deduce
\begin{equation}\label{first ex with N delta}
    \delta^{2d-1}\delta^{1/r}N(E,\delta)^{1/r}\le\delta^{d/p}\delta^{d/q}.
\end{equation}
Recalling the definition of upper Minkowski dimension, for any $\epsilon>0$ there exists a sequence of $\delta>0$ converging to zero such that
\[
N(E,\delta)\ge \delta^{-\beta+\epsilon} 
\]
Now, comparing the exponents in \eqref{first ex with N delta} gives the claim.
\end{proof}

\begin{rem}
    In the case of the multi-scale maximal operator $\mathcal{M}_E$, we recall that the H\"older condition $1/p+1/q=1/r$ is necessary by scaling. Since $\mathcal{A}_E(f,g)\leq \mathcal{M}_E(f,g)$, by replacing $1/p+1/q$ with $1/r$ in the necessary condition condition for $\mathcal{A}_E$ given by Proposition \ref{firstnecessary} we get that the condition 
    \begin{equation}\label{nec no equality}
    \frac{1}{r}\leq \frac{2d-1}{d-1+\beta}.
    \end{equation}
    is necessary for $\mathcal{M}_E$ to be $L^p\times L^q\to L^r$ bounded.
\end{rem}

The geometry behind the previous proof in the simplest case of $E=\{1\}$ is given in Figure \ref{picannball1}.

\begin{figure}[htbp]
\centering
\def\svgwidth{.7\textwidth}
\begingroup%
  \makeatletter%
  \providecommand\color[2][]{%
    \errmessage{(Inkscape) Color is used for the text in Inkscape, but the package 'color.sty' is not loaded}%
    \renewcommand\color[2][]{}%
  }%
  \providecommand\transparent[1]{%
    \errmessage{(Inkscape) Transparency is used (non-zero) for the text in Inkscape, but the package 'transparent.sty' is not loaded}%
    \renewcommand\transparent[1]{}%
  }%
  \providecommand\rotatebox[2]{#2}%
  \newcommand*\fsize{\dimexpr\f@size pt\relax}%
  \newcommand*\lineheight[1]{\fontsize{\fsize}{#1\fsize}\selectfont}%
  \ifx\svgwidth\undefined%
    \setlength{\unitlength}{635.18671471bp}%
    \ifx\svgscale\undefined%
      \relax%
    \else%
      \setlength{\unitlength}{\unitlength * \real{\svgscale}}%
    \fi%
  \else%
    \setlength{\unitlength}{\svgwidth}%
  \fi%
  \global\let\svgwidth\undefined%
  \global\let\svgscale\undefined%
  \makeatother%
  \begin{picture}(1,0.78581058)%
    \lineheight{1}%
    \setlength\tabcolsep{0pt}%
    \put(0,0){\includegraphics[width=\unitlength,page=1]{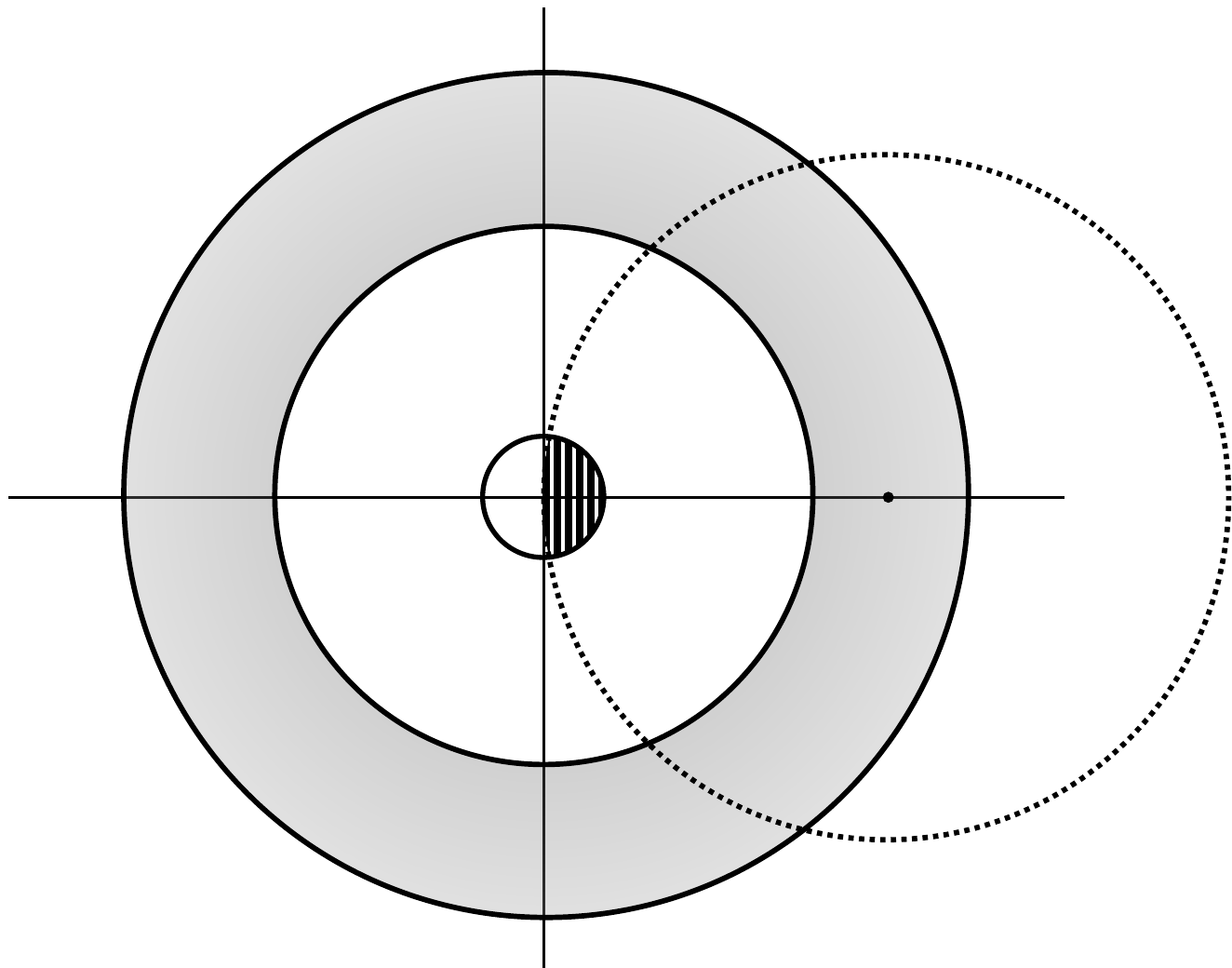}}%
    \put(0.70684817,0.4029436){\color[rgb]{0,0,0}\makebox(0,0)[lt]{\lineheight{1.25}\smash{\begin{tabular}[t]{l}$x$\end{tabular}}}}%
    \put(0.30970556,0.44840464){\color[rgb]{0,0,0}\makebox(0,0)[lt]{\lineheight{1.25}\smash{\begin{tabular}[t]{l}$B(0,C\delta)$\end{tabular}}}}%
    \put(0.04716007,0.72506499){\color[rgb]{0,0,0}\makebox(0,0)[lt]{\lineheight{1.25}\smash{\begin{tabular}[t]{l}$\mathbb{A}\left(\frac{1}{\sqrt2}-\delta,\frac{1}{\sqrt2}+\delta\right)$\end{tabular}}}}%
  \end{picture}%
\endgroup%

\caption{In the case $E=\{1\}$, $y$ lives in a $\delta$ ball around $x$ so that $x-y$ lies in $B(0,C\delta)$. For such a fixed $y$, the dashed circle illustrates the sphere along which $x-\sqrt{1-|y|^2 }z$ varies when $z$ varies over $S^{d-1}$. This intersects $B(0,C\delta)$ for $z$ in a $\delta$ cap around $\frac{x}{|x|}\in S^{d-1}$, which has spherical measure about $\delta^{d-1}$.}
\label{picannball1}
\end{figure}

The idea is that when the slicing formula is applied to indicator functions, it computes the measure of intersections of spheres and balls centered at $x$ with the sets in the indicator functions. The figure demonstrates the $t=1$ case, but as described in the proof, the general case would consist of more concentric brown annuli consisting of $x$ values where $k(x)$ has a lower bound $\delta^{2d-1}$.

\begin{prop}\label{secondnecessary}
If $\mathcal{A}_E:L^p\times L^q\rightarrow L^r$ continuously then we have
\begin{equation}
    \frac{1}{p}+\frac{1}{q}\le 1+\frac{d}{r}.
\end{equation}
\end{prop}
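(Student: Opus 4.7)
The plan is to test the bound $\Vert \mathcal{A}_E(f,g)\Vert_r\lesssim \Vert f\Vert_p\Vert g\Vert_q$ against the choice $f=g=\chi_{\A(r_0-\delta,r_0+\delta)}$, where $r_0=t_0/\sqrt{2}$ for any fixed $t_0\in E$ and $\delta$ is small. The critical radius $r_0$ is matched to the geometry of the bilinear sphere: since $|y|^2+|z|^2=1$ on $S^{2d-1}$, both $|t_0 y|$ and $|t_0 z|$ can simultaneously lie near $r_0$ precisely when $2r_0^2=t_0^2$. One has $\Vert f\Vert_p\Vert g\Vert_q\sim \delta^{1/p+1/q}$, and since $\mathcal{A}_E(f,g)\ge \mathcal{A}_{t_0}(f,g)$, the task reduces to a lower bound on $\Vert\mathcal{A}_{t_0}(f,g)\Vert_r$.

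First I would apply the same slicing formula used in the proof of Proposition~\ref{firstnecessary} to $\mathcal{A}_{t_0}(f,g)(x)$, and analyze the outcome in two distinct regimes. In the diagonal regime $|x|\lesssim \delta$, the outer integration in $y$ is restricted by the annular constraint to a shell of thickness and measure $\sim\delta$ around $x/t_0$, while the inner spherical integral over $z\in S^{d-1}$ picks up essentially the entire sphere because the annular condition $|x-t_0\sqrt{1-|y|^2}z|\approx r_0$ is automatic to the required precision; this gives $\mathcal{A}_{t_0}(f,g)(x)\gtrsim \delta$. In the off-diagonal regime $\delta\lesssim |x|\lesssim 1$, expanding $|x-t_0\sqrt{1-|y|^2}z|^2$ and imposing the annular bound translates into requiring $x\cdot z$ to lie in an interval of length $\sim \delta$, which carves out of $S^{d-1}$ a spherical slab of measure $\sim \delta/|x|$; hence $\mathcal{A}_{t_0}(f,g)(x)\gtrsim \delta^2/|x|$.

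Combining the two regimes and integrating in $L^r$ produces
\begin{equation*}
\Vert \mathcal{A}_{t_0}(f,g)\Vert_r^r \gtrsim \delta^{r+d} + \delta^{2r}\int_{\delta}^{1} |x|^{d-1-r}\, d|x|,
\end{equation*}
which is $\sim \delta^{r+d}$ when $r>d$ and $\sim \delta^{2r}$ when $r<d$. Substituting into the hypothesized boundedness and letting $\delta\to 0^+$ yields $1/p+1/q\le 1+d/r$ in the range $r>d$. When $r\le d$ the claimed inequality is automatic since $1/p+1/q\le 2\le 1+d/r$.

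The main technical obstacle will be the careful justification of the spherical slab measure in the off-diagonal regime. As $y$ traces the thin shell, one must verify that the center of the slab on $S^{d-1}$, which takes the explicit form $(|x|^2+t_0^2(1-|y|^2)-r_0^2)/(2t_0|x|\sqrt{1-|y|^2})$, remains bounded away from $\pm 1$ so that its spherical measure is genuinely $\sim \delta/|x|$ (and not degenerate near the poles), uniformly for $y$ on the shell. One also needs to confirm that the Jacobian factor $(1-|y|^2)^{(d-2)/2}$ contributes only a bounded constant, which is clear since $|y|\approx 1/\sqrt{2}$ throughout. These are elementary but require explicit bookkeeping.
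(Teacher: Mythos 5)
Your approach yields the right conclusion, but it is genuinely different in exposition from the paper's. The paper's proof is essentially a one-line reduction: since $\mathcal{A}_{t_E}\le\mathcal{A}_E$ for any fixed $t_E\in E$ and $\mathcal{A}_{t}$ is $L^p\times L^q\to L^r$ bounded iff $\mathcal{A}_1$ is (by rescaling), the authors simply invoke the known single-scale necessary condition $1/p+1/q\le 1+d/r$ for $\mathcal{A}_1$, proved in \cite[Proposition 3.3]{JL}. You instead build a self-contained argument: annular test functions at the critical radius $r_0 = t_0/\sqrt{2}$, a two-regime lower bound ($|x|\lesssim\delta$ giving $\gtrsim\delta$, and $\delta\lesssim|x|\lesssim 1$ giving $\gtrsim\delta^2/|x|$), and integration in $L^r$. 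That this yields $\|\mathcal{A}_{t_0}(f,g)\|_r\gtrsim\delta^{1+d/r}$ for $r>d$ and that the claim is trivial for $r\le d$ are both correct, and the overall structure recovers the bound. Your version has the advantage of being explicit; the paper's has the advantage of being one sentence.

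However, there is a specific claim in your last paragraph that is false as stated, and it happens to be the very place you identify as requiring care. You assert that the slab center $(|x|^2+t_0^2(1-|y|^2)-r_0^2)/(2t_0|x|\sqrt{1-|y|^2})$ "remains bounded away from $\pm 1\dots$ uniformly for $y$ on the shell." It does not. Writing $y = x/t_0 + (r_0/t_0)\hat{n}$ for a unit vector $\hat{n}$ parametrizing the shell, one computes $s^2 = t_0^2(1-|y|^2) = r_0^2 - |x|^2 - \sqrt{2}t_0(x\cdot\hat{n})$, so the numerator $|x|^2+s^2-r_0^2 = -\sqrt{2}t_0(x\cdot\hat{n})$ and the slab center equals $-\sqrt{2}t_0\cos\theta/(2s)\approx -\cos\theta$ for small $|x|$, where $\theta$ is the angle between $x$ and $\hat{n}$. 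This ranges over all of $[-1,1]$ as $y$ runs over the shell, degenerating at the poles $\theta\approx 0,\pi$. The argument is salvageable because one only needs a constant proportion of the $y$-shell (say $|\cos\theta|\le 1/2$) to produce a nondegenerate slab of measure $\sim\delta/|x|$, and this subset still has $y$-measure $\sim\delta$; so the lower bound $\mathcal{A}_{t_0}(f,g)(x)\gtrsim\delta^2/|x|$ survives. But the verification you propose to carry out — uniform nondegeneracy over the entire shell — is not the correct one, and if you tried to prove it you would find it fails. You should replace it with a restriction to the nondegenerate portion of the shell.
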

\begin{proof} 
By the equality $\mathcal{A}_t(f,g)(x)=\mathcal{A}_1(f(t\,\cdot),g(t\,\cdot))(\frac{x}{t})$, it is clear that $\|\mathcal{A}_t\|_{L^p\times L^q\rightarrow L^{r}}<\infty$ if and only if $\|\mathcal{A}_1\|_{L^p\times L^q\rightarrow L^{r}}<\infty$. Take any point $t_E\in E$. Since $\mathcal{A}_{t_E}\leq \mathcal{A}_{E}$ any necessary condition for $\mathcal{A}_{t_E}$ it is also necessary for $\mathcal{A}_E$. The fact that $1/p+1/q\leq 1+{d}/{r}$ is necessary for the boundedness of $\mathcal{A}_{1}$ is written out in detail in \cite[Proposition 3.3]{JL}. Although that proposition only considers the case $E=[1,2]$, their argument derives a lower bound for $\tilde{\mathcal{M}}=\mathcal{A}_{[1,2]}$ by replacing the supremum over $t\in [1,2]$ by $t=1$ and thus applies equally well to the single scale average $\mathcal{A}_1$.
\end{proof}

\begin{prop}\label{thirdnecessary}
Suppose $E$ has upper Minkowski dimension $\beta$. If $\mathcal{A}_E:L^p\times L^q\rightarrow L^r$ continuously then we have
\begin{equation}\label{inequalitythirdnec}
    \frac{1}{p}+\frac{1}{q}\le\frac{2d}{d+1}+\frac{1}{r}\left(1-\frac{2\beta}{(d+1)}\right)
\end{equation}
\end{prop}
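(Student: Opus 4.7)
The plan is to adapt a Knapp-type construction to the bilinear sphere. I will take $f=g=\chi_T$ where $T\subset \R^d$ is the tube centered at $\tfrac{1}{\sqrt{2}}e_1$ with radial thickness $\delta$ along $e_1$ and tangential thickness $\sqrt{\delta}$ in each of the remaining $d-1$ directions, so that $|T|\sim\delta^{(d+1)/2}$ and hence $\|f\|_{L^p}\sim \delta^{(d+1)/(2p)}$, $\|g\|_{L^q}\sim \delta^{(d+1)/(2q)}$. Centering $T$ at $\tfrac{1}{\sqrt{2}}e_1$ is the crucial point: every $y,z\in T$ satisfies $|y|^2+|z|^2=1+O(\delta)$, which is exactly what is needed to place pairs $(y,z)$ compatibly on $S^{2d-1}$.

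Using the slicing formula recalled in the proof of Proposition \ref{firstnecessary}, I will establish the pointwise lower bound $\mathcal{A}_t(f,g)(x)\gtrsim \delta^{d}$ uniformly for $x$ in a tube $T_t$ centered at $\tfrac{1+t}{\sqrt{2}}e_1$ with the same shape as $T$. The outer $y$-integration is restricted to the $d$-dimensional tube $(x-T)/t$ of volume $\sim\delta^{(d+1)/2}$, on which the weight $(1-|y|^2)^{(d-2)/2}$ is of order one since $|y|=1/\sqrt{2}+O(\delta)$. For each such $y$, the inner spherical integral $\int_{S^{d-1}}\chi_T\bigl(x-t\sqrt{1-|y|^2}\,z\bigr)\,d\sigma(z)$ is the $\sigma_{d-1}$-measure of the intersection of $S^{d-1}$ with a translated and rescaled copy of $T$ whose center lies near $e_1$. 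Since the radial direction of this translate (of thickness $\sim\delta$) is perpendicular to $S^{d-1}$ at $e_1$ and the tangential directions (of thickness $\sim\sqrt{\delta}$) are tangent to it, this intersection is a standard Knapp cap of measure $\sim\delta^{(d-1)/2}$. Multiplying gives the lower bound of order $\delta^{(d+1)/2}\cdot \delta^{(d-1)/2}=\delta^d$, and the admissible $x$ form a tube $T_t$ of $d$-volume $\sim\delta^{(d+1)/2}$.

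Next I aggregate over $t\in E$. Let $E_\delta\subset E$ be a maximal $C\delta$-separated subset for a sufficiently large constant $C$; its cardinality is comparable to $N(E,\delta)$. The centers $\tfrac{1+t}{\sqrt{2}}e_1$ for $t\in E_\delta$ are $\gtrsim \delta$-separated along the $e_1$-axis, while each $T_t$ has radial thickness only $O(\delta)$, so the tubes $\{T_t\}_{t\in E_\delta}$ are pairwise disjoint once $C$ is large enough. Consequently $\mathcal{A}_E(f,g)(x)\gtrsim \delta^{d}$ on a set of measure $\gtrsim N(E,\delta)\,\delta^{(d+1)/2}$, which yields
\begin{equation*}
\|\mathcal{A}_E(f,g)\|_{L^r}\gtrsim \delta^{d+(d+1)/(2r)}\,N(E,\delta)^{1/r}.
\end{equation*}
Plugging this into the assumed inequality $\|\mathcal{A}_E(f,g)\|_{L^r}\lesssim \|f\|_{L^p}\|g\|_{L^q}$, testing along a sequence $\delta\to 0$ with $N(E,\delta)\gtrsim \delta^{-\beta+\eps}$ (as in the proof of Proposition \ref{firstnecessary}), comparing exponents of $\delta$, and letting $\eps\to 0^+$ produces $d+\tfrac{d+1}{2r}-\tfrac{\beta}{r}\ge \tfrac{d+1}{2}\bigl(\tfrac 1p+\tfrac 1q\bigr)$, which is equivalent to inequality (\ref{inequalitythirdnec}) after multiplying by $2/(d+1)$.

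The main technical obstacle will be verifying the pointwise lower bound $\mathcal{A}_t(f,g)(x)\gtrsim \delta^d$ \emph{uniformly} across the whole tube $T_t$, rather than only at its central point. This reduces to a stability check: an $O(\delta)$ radial and $O(\sqrt{\delta})$ tangential perturbation of $x$ should leave both the outer set $(x-T)/t$ at volume $\sim\delta^{(d+1)/2}$ and the inner spherical slice producing the Knapp cap at measure $\sim\delta^{(d-1)/2}$. Some care is needed because two distinct tube-to-sphere intersections enter the slicing, but in each case a short calculation shows that the perturbed center moves by $O(\sqrt{\delta})$ tangentially to the relevant sphere, well within the tangential width of the cap. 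The comparison $\#E_\delta\sim N(E,\delta)$ and the disjointness of the $T_t$ are routine.
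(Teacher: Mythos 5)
Your proof is correct and takes essentially the same approach as the paper, namely a Knapp-type example tested against a $\delta$-cover of $E$ via the slicing formula: the paper's $R_1,R_2$ (Knapp boxes for $f,g$, of dimensions $\sqrt\delta\times\cdots\times\sqrt\delta\times\delta$, centered at the origin) and $R_3^E$ (a union of $N(E,\delta)$ translated Knapp boxes for $x$) play exactly the roles of your $T$ and $\bigsqcup_t T_t$, and both arguments produce the identical exponent count $\delta^{d+(d+1)/(2r)}N(E,\delta)^{1/r}\lesssim \delta^{(d+1)(1/p+1/q)/2}$. The only cosmetic differences are that you center the supports of $f,g$ at $\tfrac{1}{\sqrt 2}e_1$ rather than the origin, and you use a maximal $C\delta$-separated subset of $E$ in place of a minimal $\delta$-interval cover; to make the uniform lower bound over all of $T_t$ go through you should also, as the paper does with $C_1,C_2=100$, take $T$ a fixed constant factor larger than $T_t$.
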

\begin{proof} The case $E=[1,2]$ was obtained in \cite{BFOPZ}. One can get this more general condition by adapting their Knapp-type example. In their example they define 
 \[
R_1=[-C_1\sqrt{\delta}, C_1\sqrt{\delta}]^{d-1}\times [-C_1\delta, C_1\delta],\quad  R_2=[-C_2\sqrt{\delta}, C_2\sqrt{\delta}]^{d-1}\times [-C_2\delta, C_2\delta],
\]
\[R_3=[-\sqrt{\delta},\sqrt{\delta}]^{d-1}\times [\frac{1}{\sqrt{2}},\sqrt{2}],
\]where say $C_1, C_2=100$, and $\delta$ is sufficiently small. For $f_{\delta}=\chi_{R_1}$ and $g_{\delta}=\chi_{R_2}$ they showed that 
$$\tilde{\mathcal{M}}(f_{\delta},g_{\delta})(x)\gtrsim \delta^{d}$$
for all $x\in R_3=\{x=(x',x_d)\in \R^{d-1}\times \R\colon x'\in [-\sqrt{\delta},\sqrt{\delta}]^{d-1}\text{ and }\sqrt{2}x_d\in [1,2]\}$.

For a more general $E$ we replace $R_3$ with 
\[R_3^{E}:=\{x=(x',x_d)\in \R^{d-1}\times \R\colon x'\in [-\sqrt{\delta},\sqrt{\delta}]^{d-1}\text{ and }\sqrt{2}x_d\in \sqcup_{i=1}^{N(E,\delta)} I_i\},\]
where $\{I_i\colon 1\leq i\leq N(E,\delta)\}$ is a minimal collection of closed intervals of length $|I_i|=\delta$ covering $E$. For each $i$ take $t_i\in E\cap I_i$. Then for $x\in R_{3}^{E} $ with $\sqrt{2}x_d\in I_i$, one can check that  
$$\mathcal{A}_E(f_{\delta},g_{\delta})(x)\geq \mathcal{A}_{t_i}(f_{\delta},g_{\delta})(x)\gtrsim \delta^{\frac{d-1}{2}}\delta \delta^{\frac{d-1}{2}}=\delta^{d}.$$
That combined with $|R_3^{E}|\sim \delta^{\frac{d-1}{2}}N(E,\delta)\delta$ is enough to finish the computation.
One can also get this proposition as a corollary of Proposition \ref{fourthnecessary} in the case $\beta=\gamma$ since the upper Assouad dimension of $E$ will be at least $\beta$. 
\end{proof}

\begin{figure}[htbp]
\centering
\def\svgwidth{.4\textwidth}
\begingroup%
  \makeatletter%
  \providecommand\color[2][]{%
    \errmessage{(Inkscape) Color is used for the text in Inkscape, but the package 'color.sty' is not loaded}%
    \renewcommand\color[2][]{}%
  }%
  \providecommand\transparent[1]{%
    \errmessage{(Inkscape) Transparency is used (non-zero) for the text in Inkscape, but the package 'transparent.sty' is not loaded}%
    \renewcommand\transparent[1]{}%
  }%
  \providecommand\rotatebox[2]{#2}%
  \newcommand*\fsize{\dimexpr\f@size pt\relax}%
  \newcommand*\lineheight[1]{\fontsize{\fsize}{#1\fsize}\selectfont}%
  \ifx\svgwidth\undefined%
    \setlength{\unitlength}{559.67897911bp}%
    \ifx\svgscale\undefined%
      \relax%
    \else%
      \setlength{\unitlength}{\unitlength * \real{\svgscale}}%
    \fi%
  \else%
    \setlength{\unitlength}{\svgwidth}%
  \fi%
  \global\let\svgwidth\undefined%
  \global\let\svgscale\undefined%
  \makeatother%
  \begin{picture}(1,0.84873735)%
    \lineheight{1}%
    \setlength\tabcolsep{0pt}%
    \put(0,0){\includegraphics[width=\unitlength,page=1]{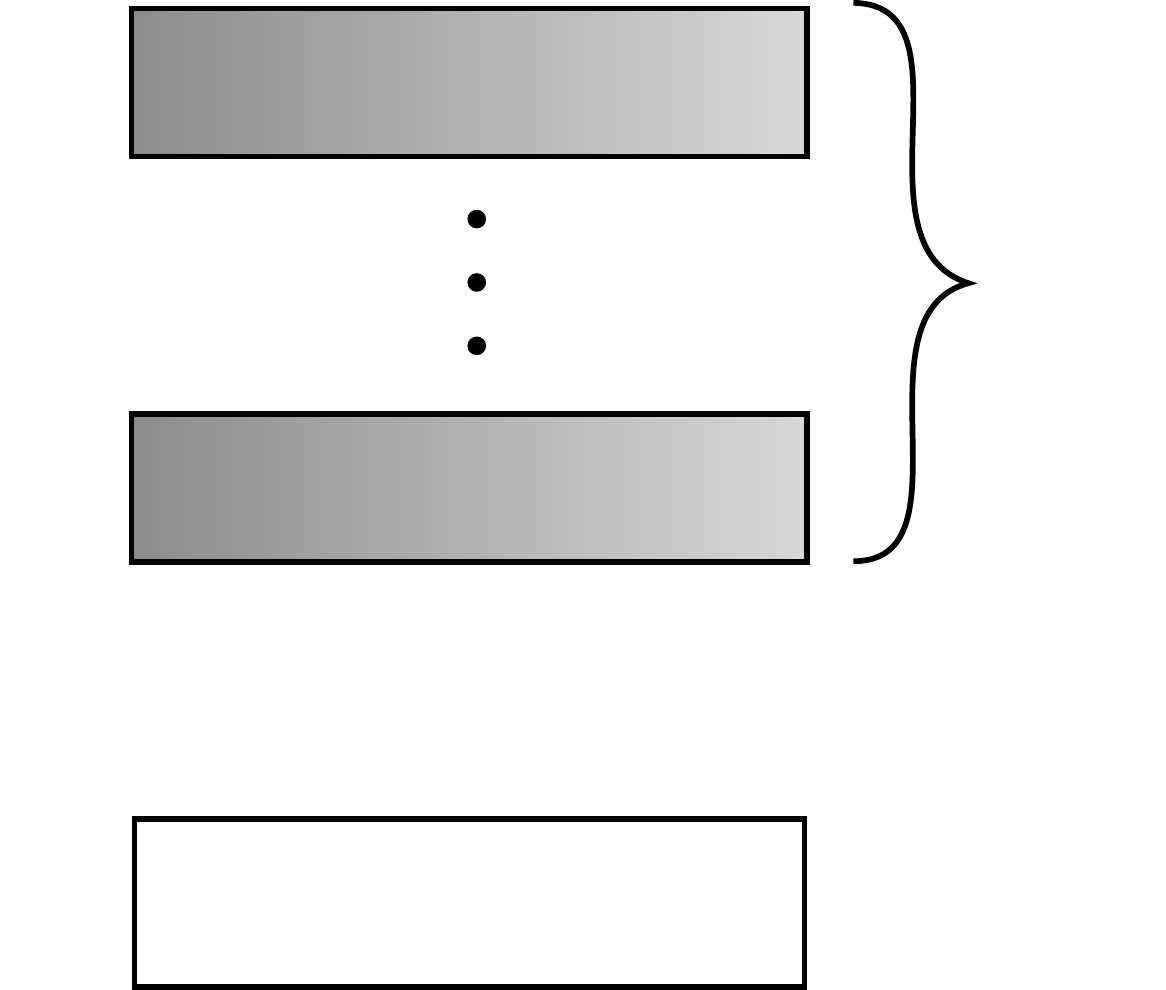}}%
    \put(0.88826599,0.62106386){\color[rgb]{0,0,0}\makebox(0,0)[lt]{\lineheight{1.25}\smash{\begin{tabular}[t]{l}$N(E,\delta)$\\copies\end{tabular}}}}%
    \put(0,0){\includegraphics[width=\unitlength,page=2]{Knapp-example.pdf}}%
    \put(0.43479409,0.43926641){\color[rgb]{0,0,0}\makebox(0,0)[lt]{\lineheight{1.25}\smash{\begin{tabular}[t]{l}$x$\end{tabular}}}}%
  \end{picture}%
\endgroup%

\caption{Picture illustrating the proof of Proposition \ref{thirdnecessary}. The gray region represents the region $R_{3}^{E}$ for $x$ where the lower bound for $\mathcal{A}_E(f_{\delta},g_{\delta})(x)$ holds. The empty rectangle represents the support of $f_{\delta}$ or $g_{\delta}$, and the dashed circle represents the spherical averages that will show up in the slicing. The key is that for $x$ in a shaded rectangle, we can find a radius in our dilation set so that dashed circle intersects a substantial part of the empty rectangle.}
\label{picknapp}
\end{figure}

If all we know is that $\text{dim}(E)=\beta$, by combining the Proposition \ref{firstnecessary},\,\ref{secondnecessary} and \ref{thirdnecessary} we get that for
\begin{equation}
    \begin{split}
    \|\mathcal{A}_E\|_{L^{p}\times L^{q}\rightarrow L^{r}}<\infty
    \end{split}
\end{equation}
it is necessary that
\begin{equation}\label{necessaryforbetaequalgamma}
    \begin{split}
\frac{1}{p}+\frac{1}{q}\leq 1+\min\left\{\frac{d-1}{d}+\frac{1-\beta}{dr},\,\frac{d-1}{d+1}\left(1+\frac{1}{r}\frac{d+1-2\beta}{d-1}\right),\,\frac{d}{r}\right\}.
\end{split}
\end{equation}

In the case $\beta=1$ this coincides with the necessary conditions obtained in \cite{BFOPZ} for the case $E=[1,2]$ which were proved to be sufficient in \cite{bhojak2023sharp} (up to some pieces of the boundary). To get more refined necessary conditions we will need to take into account the Assouad dimension of $E$, which already played an important role in \cite{AHRS}. Let $\gamma=\text{dim}_{A}(E)$ be the Assouad dimension of $E$. One can check that when $\gamma>\beta$ the necessary condition given in Proposition \ref{fourthnecessary} is more restrictive than the one above and they coincide when $\gamma=\beta$.

\begin{prop}\label{fourthnecessary}
Suppose $E\subset [1,2]$ is an Assouad regular set with (upper) Minkowski dimension $\beta$ and (upper) Assouad dimension $\gamma$. Let $\alpha=\beta/\gamma$ and let $C_{d,\alpha}^{-1}=1+\alpha(d-1)/2$. If $\mathcal{A}_E:L^p\times L^q\rightarrow L^r$ continuously then we have
\begin{equation}
    \frac{1}{p}+\frac{1}{q}\le C_{d,\alpha}\left\{ \alpha(d-1)+1+\frac{1}{r}\left(d-\beta-\frac{\alpha(d-1)}{2}\right) \right\}.
\end{equation}
That is, 
$$\frac{1}{p}+\frac{1}{q}\leq 1+\frac{\beta(d-1)}{\beta(d-1)+2\gamma}+\left(\frac{(d-\beta)2\gamma-(d-1)\beta}{\beta(d-1)+2\gamma}\right)\frac{1}{r}$$
In particular, when $\beta=\gamma$, one recovers inequality (\ref{inequalitythirdnec}).
\end{prop}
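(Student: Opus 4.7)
My plan is to prove this by extending the Knapp-type test used in Proposition \ref{thirdnecessary} to exploit the finer Assouad structure of $E$, along the same lines as the derivation of the point $Q_4$ in the analogous linear theory of \cite{AHRS, RS}. The key extra ingredient is the ability, granted by Assouad regularity of $E$ with $\dim_{A,\theta}E=\gamma$ at the threshold $\theta=1-\alpha$, to pass to a sub-interval of $[1,2]$ on which $E$ is ``maximally dense'' in the Assouad sense.

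Concretely, fix $\eps>0$ and set $\theta=1-\alpha$. By Assouad regularity, for arbitrarily small $\delta>0$ there is a sub-interval $I_0\subset[1,2]$ with $|I_0|=\eta:=\delta^{\theta}=\delta^{1-\alpha}$ such that
\[
N(E\cap I_0,\delta)\gtrsim (\eta/\delta)^{\gamma-\eps}=\delta^{-\gamma\alpha-\eps\log\delta/\log\delta}\approx \delta^{-\beta},
\]
so $E\cap I_0$ contains roughly $\delta^{-\beta}$ many $\delta$-separated points, packed into an interval of length only $\delta^{1-\alpha}$ rather than the full unit length.

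Now I would test the bound on Knapp-type indicators $f=\chi_{R_1}$, $g=\chi_{R_2}$ where $R_1=R_2\subset\R^d$ is a rectangle centered at the origin of dimensions $\rho^{d-1}\times\tau$, with $\rho$ and $\tau$ to be chosen in terms of $\delta,\eta$. Following the slicing argument from Proposition \ref{thirdnecessary}, for each $t\in E\cap I_0$ and for $x$ in a region $R_3^{(t)}$ whose $x'$-component lies in a box of side $\sim\rho$ (or $\sim\sqrt{\tau}$, depending on which of $\rho^2,\tau$ is smaller) and whose $x_d$-component lies in an interval of length $\sim\tau$ around $t/\sqrt{2}$, the slicing formula produces a lower bound
\[
\mathcal{A}_t(\chi_{R_1},\chi_{R_2})(x)\;\gtrsim\;(\text{outer box vol})\cdot(\text{inner cap measure}),
\]
and the disjoint (for $\tau\lesssim\delta$) or overlapping (for $\tau\gtrsim\delta$) union $R_3^{E,I_0}=\bigcup_{t\in E\cap I_0}R_3^{(t)}$ has measure whose $x_d$-contribution is $\min(N\tau,\eta)$, thereby feeding the Assouad density $N\sim\delta^{-\beta}$ into the test. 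Plugging into the assumed inequality $\|\mathcal{A}_E(f,g)\|_{L^r}\le C\|f\|_{L^p}\|g\|_{L^q}$, writing every relevant quantity as a power of $\delta$, and demanding that the resulting comparison of $\delta$-exponents remain valid as $\delta\to 0$ (sending $\eps\to 0$ at the end) produces a family of constraints on $X:=\frac{1}{p}+\frac{1}{q}$ parametrized by the choice of $\rho,\tau$.

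The main technical step, and the main obstacle, is the careful optimization of the parameters $\rho,\tau$. The lower bound for $\mathcal{A}_t$ has a case split — in the perpendicular-binding regime $\rho\le\sqrt{\tau}$ the inner integral contributes $\rho^{d-1}$, while in the cap-binding regime $\rho\ge\sqrt{\tau}$ it contributes $\tau^{(d-1)/2}$ — and the relevant ``good region'' measure changes correspondingly. The upshot should be that the choice of $\rho,\tau$ compatible with the Assouad scale $\eta=\delta^{1-\alpha}$ (rather than with $\delta$ itself, as in Proposition \ref{thirdnecessary}) produces a constraint in which the coefficient of $X$ is $1+\alpha(d-1)/2$ and the $1/r$-coefficient is $d-\beta-\alpha(d-1)/2$, giving exactly the claimed inequality. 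Specialization to $\alpha=1$ (i.e.\ $\beta=\gamma$) makes the Assouad sub-interval coincide with $[1,2]$ itself and the test collapses to that of Proposition \ref{thirdnecessary}, recovering \eqref{inequalitythirdnec} as predicted by the final line of the statement.
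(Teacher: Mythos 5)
Your high-level strategy is the right one — and it matches the paper's opening move exactly: use Assouad regularity at $\theta = 1-\alpha$ to locate a sub-interval $I$ of length $\delta^{1-\alpha}$ with $N(E\cap I,\delta)\gtrsim \delta^{-\beta}$, and then run a localized slicing test. However, the specific test functions you propose create a genuine gap.

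You take $f=g$ to be indicators of \emph{flat} rectangles of dimensions $\rho^{d-1}\times\tau$ centered at the origin. The paper instead takes $f=g=\chi_{\{||y|-2^{-1/2}r|\le\delta,\ |y'|\le\sigma\}}$ with $\sigma = \delta^{\alpha/2}$ and $r$ the left endpoint of $I$: a \emph{curved annular shell} whose thickness $\delta$ is measured radially. This curvature is not cosmetic. The shell follows the geometry of the spheres $\{|y-x|=t/\sqrt{2}\}$ appearing in the slicing formula, and a short computation (writing $x_d=-\tfrac{1}{\sqrt{2}}(t-r)$, $|x'|\lesssim\delta/\sigma$) shows that on the full cap $|y'|\le\sigma$ these spheres remain within distance $\delta$ of the set $\{|y|=r/\sqrt{2}\}$, \emph{even though} $\sigma > \sqrt{\delta}$. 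That is precisely what lets the cap contribute $\sigma^{d-1} = \delta^{\alpha(d-1)/2}$ to the slicing lower bound. With a flat rectangle of thickness $\tau$, the parabolic bending of the sphere caps the usable width at $\sqrt{\tau}$, so in your ``cap-binding'' regime the cap only ever contributes $\tau^{(d-1)/2}$; you cannot reach $\sigma^{d-1}$ for $\alpha<1$. If one carries your flat-rectangle test through the exponent comparison (say with $\tau=\delta$, $\rho=\delta^a$, and optimizes over $a$), the best achievable constraint lands on $a=1/2$ and reproduces exactly Proposition \ref{thirdnecessary}; the Assouad density $\gamma$ never improves things, because the sub-interval structure only ever adds $\delta$-separated $x_d$-slabs, and the flat test function cannot widen the cap in a compatible way.

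Two further, smaller inaccuracies compound the issue. First, you describe the $x'$-component of the good region as being of size $\sim\rho$ (or $\sim\sqrt{\tau}$); in the paper it is of size $\delta/\sigma\le\sqrt{\delta}\le\sigma$, coming from the requirement $\sigma|x'|\lesssim\delta$ that keeps the sphere $\delta$-close to the curved shell over the cap, not from the indicator set's own width. Second, the essential optimization step is left as ``the upshot should be'': you correctly guess what coefficients should appear, but do not exhibit parameter choices that produce them, and the flat-rectangle ansatz is not capable of producing them. The fix, per the paper, is to replace the flat rectangles by the curved annular slabs with width $\sigma = \delta^{\alpha/2}$ and radial thickness $\delta$ anchored at the endpoint $r$ of the Assouad sub-interval; with that change the single computation
\[
\sigma^{2d-2}\delta\left((\delta/\sigma)^{d-1}\delta\, N(E\cap I,\delta)\right)^{1/r}\ \lesssim\ (\sigma^{d-1}\delta)^{1/p+1/q}
\]
gives the stated inequality directly.
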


\begin{proof}
From the definition of Assouad spectrum and the fact that $E$ is Assouad regular, for any $\epsilon>0$ there exists $\delta>0$ (which can be shrunk smaller if desired) such that we can find an interval $I\subset [1,2]$ of length $\delta^{1-\beta/\gamma}$ with the property $N(E\cap I,\delta)\ge (|I|/\delta)^{\gamma-\epsilon}$. For convenience, we will denote $\alpha=\beta/\gamma\leq 1$ and $\sigma=\delta^{\alpha/2}\ge \delta^{1/2}$. Let $h_{\delta,I}$ be the indicator function of the set
\[
\{(y',y_d)\in \BR^d: \left||y|-2^{-1/2}r\right|\le \delta, |y'|\le \sigma\}
\]
where $r$ is the left endpoint of $I$. Then we have the estimate
\[
\Vert h_{\delta,I}\Vert_p \approx (\sigma^{d-1}\delta)^{1/p}
\]
Cover $E\cap I$ by a minimal collection of pairwise disjoint $\delta$-intervals $\mathcal{J}$, which has cardinality comparable to $N(E\cap I,\delta)$. Consider the region
\[
R=\left\{(x',x_d)\colon |x'|\leq c\delta/\sigma, x_d=\frac{-1}{\sqrt{2}}(t-r),\text{ for some }t\in \bigcup\limits_{J\in \mathcal{J}} J\right\}
\]
We claim that for any $x\in R$, we have the lower bound
\begin{equation}\label{Assouad test}
\mathcal{A}_E(h_{\delta,I},h_{\delta,I})(x)\gtrsim \sigma^{2d-2}\delta.
\end{equation}
The calculation is analogous to the earlier one in the proof of Proposition \ref{thirdnecessary}; by using the slicing formula, one sees that balls centered in $R$ with radius chosen suitably intersect the support of $h_{\delta,I}$ in a set of measure about $\sigma^{d-1}\delta$ and spheres intersect in a set of area about $\sigma^{d-1}$. Here, choosing the radius suitably means taking the radius to be the closest value in $E$ to the $x_d$ coordinate and then rescaling by $\frac{1}{\sqrt 2}$. \\
We notice that $R$ has volume about $(\delta/\sigma)^{d-1}\delta N(E\cap I,\delta)$. Putting the lower bound \eqref{Assouad test} into \eqref{bound test nec} leads to the inequality
\begin{equation}
    \sigma^{2d-2}\delta\left((\delta/\sigma)^{d-1}\delta N(E\cap I,\delta)\right)^{1/r}\le (\sigma^{d-1}\delta)^{p^{-1}+q^{-1}}
\end{equation}
Recalling that $\sigma=\delta^{\alpha/2}$, the claim follows by comparing the exponents and doing routine algebraic manipulation.
\end{proof}

Putting Propositions \ref{firstnecessary}, \ref{secondnecessary}, and \ref{fourthnecessary} all together we get the following necessary conditions for the boundedness of $\mathcal{A}_E$.

\begin{prop}[Necessary conditions for boundedness of $\mathcal{A}_E$]\label{allnecessaryAE}
Let $E\subset[1,2]$ be a Assouad regular set with $\text{dim}_{M}(E)=\beta$ and $\text{dim}_{A}(E)=\gamma$. If $\mathcal{A}_E:L^p\times L^q\rightarrow L^r$ is bounded, then 
$$\frac{1}{r}\leq \frac{1}{p}+\frac{1}{q}\leq 1+m_{linear}(d,r,\beta,\gamma).$$
where $$m_{linear}(d,r,\beta,\gamma):=\text{min}\left\{\frac{d-1}{d}+\frac{1-\beta}{dr},\frac{\beta(d-1)}{\beta(d-1)+2\gamma}+\frac{(d-\beta)2\gamma-(d-1)\beta}{\beta(d-1)+2\gamma}\frac{1}{r},\frac{d}{r}\right\}.$$
\end{prop}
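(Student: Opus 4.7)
The plan is to simply combine the three necessary conditions already established in Propositions \ref{firstnecessary}, \ref{secondnecessary}, and \ref{fourthnecessary}, together with a standard large-ball test to obtain the lower bound $\frac{1}{r}\le \frac{1}{p}+\frac{1}{q}$.

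For the upper bound $\frac{1}{p}+\frac{1}{q} \le 1+m_{\text{linear}}(d,r,\beta,\gamma)$, I would observe that the three expressions appearing inside the minimum defining $m_{\text{linear}}$ are precisely (after subtracting the $1$) the right-hand sides of the inequalities established in Propositions \ref{firstnecessary}, \ref{secondnecessary}, and \ref{fourthnecessary}, respectively: the term $\frac{d-1}{d}+\frac{1-\beta}{dr}$ comes from Proposition \ref{firstnecessary} (since $\frac{2d-1}{d}=1+\frac{d-1}{d}$), the term $\frac{d}{r}$ is exactly Proposition \ref{secondnecessary}, and the last term is Proposition \ref{fourthnecessary}. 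Since all three inequalities must hold simultaneously whenever $\mathcal{A}_E:L^p\times L^q\to L^r$ is bounded, their conjunction is equivalent to bounding $\frac{1}{p}+\frac{1}{q}-1$ by the minimum of the three, which is exactly $m_{\text{linear}}(d,r,\beta,\gamma)$. The Assouad-regularity hypothesis on $E$ is needed only to invoke Proposition \ref{fourthnecessary}; the other two conditions hold without this restriction.

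For the lower bound $\frac{1}{r}\le \frac{1}{p}+\frac{1}{q}$, I would test the hypothesized bound against characteristic functions of large balls. Take $f=g=\chi_{B(0,R)}$ with $R$ large. Since $\sigma_{2d-1}$ is supported on the unit sphere of $\R^{2d}$ and $E\subset[1,2]$, for every $x\in B(0,R-3)$, every $t\in E$, and every $(y,z)$ in the unit sphere, both $x-ty$ and $x-tz$ lie in $B(0,R)$. Hence $\mathcal{A}_E(f,g)(x)=1$ throughout $B(0,R-3)$, yielding $\|\mathcal{A}_E(f,g)\|_{L^r}\gtrsim R^{d/r}$, while $\|f\|_{L^p}\|g\|_{L^q}\sim R^{d/p+d/q}$. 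Letting $R\to\infty$ in the bilinear estimate forces $\frac{1}{r}\le \frac{1}{p}+\frac{1}{q}$.

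Since all the analytic content has been packaged into the three preceding propositions, there is no real obstacle in the argument — the step to watch is simply to verify that each of the three expressions inside $m_{\text{linear}}$ matches the corresponding proposition exactly, and that the Assouad-regularity hypothesis is invoked only where it is genuinely required.
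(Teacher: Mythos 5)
Your proposal is correct and matches what the paper does: the paper's entire "proof" is the sentence preceding the proposition, namely that it is obtained by combining Propositions \ref{firstnecessary}, \ref{secondnecessary}, and \ref{fourthnecessary}, and after subtracting $1$ from each right-hand side these are exactly the three entries inside the minimum defining $m_{\text{linear}}(d,r,\beta,\gamma)$. The only thing the paper leaves implicit is the trivial lower bound $1/r\le 1/p+1/q$, which you justify cleanly with a standard large-ball test $f=g=\chi_{B(0,R)}$ using that $E\subset[1,2]$ and $\sigma_{2d-1}$ is supported on the unit sphere; this is a correct and appropriate filling-in of that small gap.
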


The example of Proposition \ref{firstnecessary} can also be adapted to give necessary conditions in the biparameter setting.
\begin{prop}
Suppose $E_1,E_2$ have upper Minkowski dimensions $\beta_1,\beta_2$. Define the quantity 
\[
\beta^*=\dim_M(E_1E_2(E_1^2+E_2^2)^{-1/2}).
\]
If $\mathcal{A}^{(2)}_{S^{2d-1},E_1,E_2}:L^p\times L^q\rightarrow L^r$ continuously then we have
\begin{equation}
    \frac{1}{p}+\frac{1}{q}\le\frac{2d-1}{d}+\frac{1-\beta^*}{dr}
\end{equation}
\end{prop}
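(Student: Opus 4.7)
The plan is to adapt the testing-function argument from Proposition \ref{firstnecessary} to the biparameter setting by taking $f=g=\chi_{B(0,C\delta)}$ for a large universal constant $C$ and a small $\delta>0$, and then identifying the set of $x$ on which $\mathcal{A}^{(2)}_{S^{2d-1},E_1,E_2}(f,g)$ admits a lower bound of the order $\delta^{2d-1}$. Using the same slicing of $d\sigma_{2d-1}$ over $S^{2d-1}\subset\R^d\times\R^d$ that was employed in the proof of Proposition \ref{firstnecessary} (parametrize $(y,z)\in S^{2d-1}$ by $y\in B^d(0,1)$ and $z=\sqrt{1-|y|^2}\omega$ with $\omega\in S^{d-1}$), the biparameter single-scale average takes the form
\[
\mathcal{A}^{(2)}_{t_1,t_2}(f,g)(x) = c_d\int_{B^d(0,1)} g(x-t_2 y)\left(\int_{S^{d-1}} f\bigl(x-t_1\sqrt{1-|y|^2}\,\omega\bigr)\,d\sigma_{d-1}(\omega)\right)(1-|y|^2)^{\tfrac{d-2}{2}}\,dy.
\]

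For a fixed pair $(t_1,t_2)\in E_1\times E_2\subset[1,2]^2$, the factor $g(x-t_2 y)$ restricts $y$ to an $O(\delta)$-ball around $x/t_2$, forcing $|y|\approx |x|/t_2$; for such $y$, the inner integrand requires $\omega\in S^{d-1}$ to lie in an $O(\delta)$-cap around $x/(t_1\sqrt{1-|y|^2})$, which has positive $\sigma_{d-1}$-measure only if $|x|\approx \rho(t_1,t_2):=t_1 t_2/\sqrt{t_1^2+t_2^2}$. A direct count, mimicking the single-parameter case, then yields
\[
\mathcal{A}^{(2)}_{t_1,t_2}(f,g)(x)\gtrsim \delta^{2d-1}\qquad\text{whenever}\quad \bigl||x|-\rho(t_1,t_2)\bigr|\lesssim \delta,
\]
uniformly in $(t_1,t_2)\in[1,2]^2$ since $\rho(t_1,t_2)$, $|y|$, and $\sqrt{1-|y|^2}$ all remain bounded away from $0$ and $1$ under the constraint $E_i\subset[1,2]$.

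Taking the supremum over $E_1\times E_2$, we obtain $\mathcal{A}^{(2)}_{S^{2d-1},E_1,E_2}(f,g)(x)\gtrsim \delta^{2d-1}$ on the union of $\delta$-thick annuli around radii in $\rho(E_1\times E_2) = E_1 E_2(E_1^2+E_2^2)^{-1/2}$; by covering a minimal collection of these radii with essentially disjoint $\delta$-intervals, this union has Lebesgue measure at least $c\,\delta\cdot N\bigl(E_1 E_2(E_1^2+E_2^2)^{-1/2},\delta\bigr)$. Plugging this lower bound into the hypothesized inequality $\|\mathcal{A}^{(2)}_{S^{2d-1},E_1,E_2}(f,g)\|_{L^r}\lesssim \|f\|_{L^p}\|g\|_{L^q}$ together with $\|f\|_{L^p}\sim\delta^{d/p}$ and $\|g\|_{L^q}\sim\delta^{d/q}$ yields
\[
\delta^{2d-1}\Bigl(\delta\cdot N\bigl(E_1 E_2(E_1^2+E_2^2)^{-1/2},\delta\bigr)\Bigr)^{1/r}\lesssim \delta^{d/p+d/q}.
\]
Finally I would take $\delta\to 0$ along a sequence realizing $\text{dim}_M = \beta^*$ (so that $N\gtrsim \delta^{-\beta^*+\eps}$ for every $\eps>0$) and let $\eps\to 0$; comparison of the exponents of $\delta$ delivers the required bound.

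The main obstacle is verifying that the lower bound of size $\delta^{2d-1}$ is genuinely uniform across $(t_1,t_2)\in E_1\times E_2$ and that the annular contributions indexed by the cover of $\rho(E_1\times E_2)$ do not overlap wastefully; both points follow from $E_i\subset[1,2]$ keeping every geometric quantity in the slicing away from the degenerate regimes $0$ and $1$, in the same spirit as the single-parameter argument. The only new input beyond Proposition \ref{firstnecessary} is recognizing that $\rho(E_1\times E_2)$ is exactly the set whose upper Minkowski dimension is $\beta^*$ as defined in the statement.
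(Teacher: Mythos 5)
The proposal is correct and follows essentially the same route as the paper: the same test functions $f=g=\chi_{B(0,C\delta)}$, the same slicing parametrization of $S^{2d-1}$, the same identification of the admissible radii $|x|\approx t_1t_2/\sqrt{t_1^2+t_2^2}$, and the same comparison of $\delta$-exponents after inserting the covering-number lower bound along a sequence realizing the upper Minkowski dimension of $E^*$. The only cosmetic difference is that you swapped which of $f,g$ sits in the ball slice versus the spherical cap, which is harmless by the symmetry of $S^{2d-1}$.
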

\begin{proof}
    Let $f=g=\chi_{B(0,C\delta)}$ and denote $h=\mathcal{A}^{(2)}_{\hat{\sigma},E_1,E_2}(f,g)$. Mimicking the single parameter case, we want to find the set of $x$ where we have the lower bound $|h(x)|\gtrsim \delta^{2d-1}$. Since we now have parameters $t_1,t_2$, the region where this lower bound holds contains all $x$ such that there exist $t_1\in E_1,t_2\in E_2$ and  some $|y|\in (0,1)$ satisfying
    \[
    |x|=t_1|y|=t_2\sqrt{1-|y|^2}.
    \]
    Really, the first equation just needs to hold up to $\delta$ error so that we can compute the $\delta$-covering number and take a limit. Substituting the second equation necessitates that $|y|$ solves the quadratic equation
    \[
    \left(\frac{t_1^2+t_2^2}{t_2^2}\right)|y|^2=1
    \]
    and so we can write the solution as
    \[
    |x|=t_1|y|=\frac{t_1t_2}{\sqrt{t_1^2+t_2^2}}
    \]
    Thus, the values of $|x|$ that are admissible is a $C\delta$-neighborhood of points of the form
    \[
    E^*:=\left\{ \frac{t_1t_2}{\sqrt{t_1^2+t_2^2}}:t_1\in E_1,t_2\in E_2 \right\}
    \]
    Call $\beta^*=\dim_{M}(E^*)$, which depends on $E_1,E_2$. Testing this against the boundedness condition gives
    \begin{equation}\label{multiparameter ex with N delta}
    \delta^{2d-1}\delta^{1/r}N(E^*,\delta)^{1/r}\le\delta^{d/p}\delta^{d/q}
    \end{equation}
    which leads to the inequality
    \[
    \frac{d}{p}+\frac{d}{q}\le 2d-1+\frac{1-\beta^*+\epsilon}{r}
    \]
    We can send $\epsilon$ to 0 in the limit, so dividing through by $d$ gives the claim.
\end{proof}

Similarly, we can adapt Proposition \ref{thirdnecessary} to the biparameter setting.
\begin{prop}
    Suppose $E_1,E_2$ have upper Minkowski dimension $\beta_1,\beta_2$ and let $\beta^*$ be as in the previous proposition. If $\mathcal{A}_E:L^p\times L^q\rightarrow L^r$ continuously then we have
\begin{equation}
    \frac{1}{p}+\frac{1}{q}\le\frac{2d}{d+1}+\frac{1}{r}\left(1-\frac{2\beta^*}{(d+1)}\right)
\end{equation}
\end{prop}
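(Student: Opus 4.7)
The plan is to adapt the Knapp-type example from the proof of Proposition \ref{thirdnecessary} directly to the biparameter setting, with the single set $E$ replaced by the ``effective'' set $E^*$ that captures how two independent radii can conspire to make the bilinear average large.

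Concretely, I would take the same thin rectangles as test functions, namely $f_\delta = \chi_{R_1}$ and $g_\delta = \chi_{R_2}$ where $R_i = [-C_i\sqrt\delta, C_i\sqrt\delta]^{d-1}\times[-C_i\delta, C_i\delta]$. These satisfy $\|f_\delta\|_p \sim \delta^{(d+1)/(2p)}$ and $\|g_\delta\|_q \sim \delta^{(d+1)/(2q)}$. The first main step is a geometric analysis: for a fixed pair $(t_1,t_2)\in E_1\times E_2$ and $x = (x', x_d)$ with $|x'|\le \sqrt\delta$, the constraints $x - t_1 y \in R_1$ and $x - t_2 z \in R_2$ force $y_d\approx x_d/t_1$ and $z_d \approx x_d/t_2$, and the sphere constraint $|y|^2+|z|^2=1$ then pins down $x_d$ up to $O(\delta)$ to the value $t_1 t_2/\sqrt{t_1^2+t_2^2}$. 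The remaining $2(d-1)$ tangential directions of size $\sqrt\delta$ together with the one ``radial'' direction on the sphere of size $\delta$ yield a Knapp cap of surface measure $\sim \delta^d$ in exactly the same way as the single-parameter case, so that the bilinear spherical average of $(f_\delta, g_\delta)$ at radii $(t_1,t_2)$ evaluated at such $x$ is $\gtrsim \delta^d$.

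Taking the supremum over $(t_1,t_2)\in E_1\times E_2$ shows that the lower bound $\mathcal{A}^{(2)}_{\hat\sigma_{2d-1},E_1,E_2}(f_\delta,g_\delta)(x) \gtrsim \delta^d$ holds on the set
\[
R^{E_1,E_2}_3 = \Big\{x=(x',x_d) : |x'|\le \sqrt\delta,\; x_d \in \bigsqcup_{i=1}^{N(E^*,\delta)} I_i\Big\},
\]
where $\{I_i\}$ is a minimal cover of $E^*$ by closed intervals of length $\delta$. This set has Lebesgue measure $\sim \delta^{(d-1)/2}\cdot \delta\cdot N(E^*,\delta) = \delta^{(d+1)/2} N(E^*,\delta)$. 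Plugging into the assumed bound gives
\[
\delta^d\cdot \big(\delta^{(d+1)/2} N(E^*,\delta)\big)^{1/r} \lesssim \delta^{(d+1)/(2p)}\cdot \delta^{(d+1)/(2q)}.
\]
Letting $\delta\to 0$ along a sequence realizing $N(E^*,\delta)\gtrsim \delta^{-(\beta^*-\epsilon)}$ (which exists by the definition of upper Minkowski dimension), comparing exponents, dividing by $(d+1)/2$, and sending $\epsilon\to 0$ produces the claimed inequality.

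The main technical step is verifying the pointwise lower bound $\delta^d$ in the biparameter setting; once one is convinced that the sphere constraint on $S^{2d-1}$ still removes a single dimension and the Knapp cap has the same measure as in the single-parameter case (independently of the specific pair $(t_1,t_2)$), the rest is bookkeeping. The only genuinely new ingredient is the identification of $E^*$ as the correct ``effective'' dilation set in the $x_d$-variable, which comes directly from solving the sphere constraint with two distinct radii.
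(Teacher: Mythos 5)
Your proof follows essentially the same approach as the paper's. The test functions $f_\delta = \chi_{R_1}$, $g_\delta = \chi_{R_2}$ are exactly those used in the paper, the identification of $x_d \approx t_1t_2/\sqrt{t_1^2+t_2^2}$ as the allowed ``heights'' (hence $E^*$ as the effective dilation set for the $x_d$-coordinate) is precisely the content the paper imports from the preceding proposition, the lower bound $\mathcal{A}^{(2)}_{\hat\sigma_{2d-1},E_1,E_2}(f_\delta,g_\delta)\gtrsim\delta^d$ on the union of $N(E^*,\delta)$ slabs matches the paper's $R_3$, and the exponent bookkeeping is identical. No gaps or deviations.
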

\begin{proof}
Once again, we can define 
 \[
R_1=[-C_1\sqrt{\delta}, C_1\sqrt{\delta}]^{d-1}\times [-C_1\delta, C_1\delta],\quad  R_2=[-C_2\sqrt{\delta}, C_2\sqrt{\delta}]^{d-1}\times [-C_2\delta, C_2\delta],
\]
\[R_3=[-\sqrt{\delta},\sqrt{\delta}]^{d-1}\times \left(\bigcup_{i=1}^{N(E^*,\delta)}I_i\right)
\]
where we may take $C_1, C_2=100$, $\delta$ is sufficiently small, and $\{I_i\}$ is a minimal collection of $\delta$-intervals covering the set $E^*$. Then for $x\in R_{3}$, one can check that  
$$\mathcal{A}_E(f_{\delta},g_{\delta})(x)\gtrsim \delta^{d}$$
That combined with $|R_3|\sim \delta^{\frac{d-1}{2}}N(E^*,\delta)\delta$ is enough to finish the computation.
\end{proof}

\begin{rem}
    In fact, Proposition \ref{fourthnecessary} also generalizes to the biparameter setting, with $\beta,\gamma$ being replaced by $\beta^*$ and $\gamma^*$ respectively, the Minkowski and Assouad dimensions of the set $E^*$. The details are very similar to the other biparameter arguments we have given, so we leave them to the interested reader.
\end{rem}
In fact, we can say more about $\beta^*$ by making use of Lemma \ref{lem:alg of mink dim}.
The dimension of $E^*$ is equal to the dimension of $\frac{E_1^2E_2^2}{E_1^2+E_2^2}$ which can be rewritten as $(E_1^{-2}+E_2^{-2})^{-1}$. Notice that if we freeze one of the sets $E_1$ or $E_2$ and vary the $t_i$s in the other set, we immediately see from Lemma \ref{lem:alg of mink dim} that $\beta^*\ge\max\{\beta_1,\beta_2\}$ since dimension is preserved under taking reciprocals, squaring, and translation by a constant. This lower bound can be achieved, such as in the case that $\beta_1=1$. On the other hand, given two sets $S_1,S_2$, their sumset satisfies the dimension bound
\[
\dim_{M}(S_1+S_2)\le \dim_{M}(S_1)+\dim_{M}(S_2).
\]
Thus, the description
\[
\max\{\beta_1,\beta_2\}\le \beta^*\le \beta_1+\beta_2
\]
is the best we can say for an arbitrary pair of sets $E_1,E_2\subset [1,2]$.

\bibliographystyle{alpha}
\bibliography{sources}

\end{document}